\numberwithin{equation}{section}
\newtheorem*{rep@theorem}{\rep@title}
\newcommand{\newreptheorem}[2]{%
\newenvironment{rep#1}[1]{%
 \def\rep@title{#2 \ref{##1}}%
 \begin{rep@theorem}}%
 {\end{rep@theorem}}}
\newtheorem{theorem}{Theorem}[section]
\newtheorem{proposition}[theorem]{Proposition}
\newtheorem{lemma}[theorem]{Lemma}
\newtheorem{corollary}[theorem]{Corollary}
\newtheorem{remark}[theorem]{Remark}
\newtheorem{example}[theorem]{Example}
\theoremstyle{definition}
\newtheorem{definition}[theorem]{Definition}
\newcommand{\Ham}{\text{Hm}}
\title{Supersymmetric harmonic oscillators on singular geometries}
\author{Gayana Jayasinghe}
\date{}
\begin{document}

\maketitle

\begin{abstract}
\justify 
Equivariant localization expresses global invariants in terms of local invariants, and many of them appearing in equivariant index theory, (holomorphic) Morse theory, geometric quantization and supersymmetric localization can be characterized as renormalized supertraces over cohomology groups of Hilbert complexes associated to local model geometries.
This paper extends such local invariants, introducing and studying twisted de Rham and Dolbeault complexes (including Witten deformed versions) on singular spaces equipped with generalized radial (K\"ahler Hamiltonian) Morse functions and singular metrics arising naturally in algebraic geometry and moduli problems. 

We use the $\mathcal{N}=2$ supersymmetry and nilpotency properties of these complexes to extend an ansatz of Cheeger for the eigensections of the associated Laplace/Schr\"odinger type operators, reducing the problem to the study of Sturm-Liouville operators and one dimensional Schr\"odinger operators corresponding to different choices of domains, including those with del-bar Neumann boundary conditions for Dolbeault complexes.
We define renormalized Lefschetz numbers and Morse polynomials generalizing those established in the smooth and conic settings where they have been used to compute 
many invariants of interest in physics and mathematics. We study structures on links of topological cones with singular K\"ahler metrics, which we use to describe associated analytic invariants including local cohomology groups.
The techniques and results collected here are broadly applicable in the study of global analysis on singular spaces, including proofs of localization theorems with numerous applications.
\end{abstract}

\tableofcontents

\justify

\section{Introduction}

Equivariant localization is an umbrella term used to describe a broad collection of results that are used to compute geometric and topological invariants of various spaces in the presence of symmetries. Roughly such results show that global invariants on a space are completely determined by local invariants at fixed points of symmetries, easier to define and compute at isolated fixed points.
Understanding such results is an ongoing program, not only on finite dimensional spaces (both smooth and singular), but also on various infinite dimensional spaces where results are yet to be established rigorously in many cases. 
There are some applications in physical mathematics where first a non-rigorous version of localization on an infinite dimensional space is used to reduce expressions for invariants to those on finite dimensional ones (sometimes singular) and then a second rigorous version is used on the finite dimensional spaces to express those invariants in terms of local invariants at fixed point sets (see \cite{gibbons1979classification,NekrasovABCDinsta,pestun2012localization}). Understanding these local equivariant invariants is of great importance in the development of mathematics and physics.

The Atiyah-Bott-Lefschetz fixed point theorem for elliptic (Hilbert) complexes of differential operators on smooth manifolds (\cite{AtiyahBott1,AtiyahBott2}) formalizes the fact that local equivariant invariants corresponding to global supertraces of geometric endomorphisms (corresponding to symmetries) on cohomology of those complexes can be expressed in terms of linear algebraic formulas on the tangent space at fixed points together with information from local traces of geometric endomorphisms on related bundles (which twist the coefficients of the differential operators) over isolated fixed point sets. Alternate descriptions of these local formulas in terms of equivariant eta invariants (\cite{weiping1990note,jayasinghe2023l2}), and supertraces over cohomology groups of \textbf{local Hilbert complexes} when the operators are compatible with (almost) complex structures (\cite{jayasinghe2023l2}) admit generalizations to singular spaces. 
While equivariant indices and Lefschetz numbers are usually categorified at the level of equivariant K-theory (\cite{Baumformula81}), (holomorphic) Morse polynomials participating in holomorphic Morse inequalities are categorified by (holomorphic) Witten instanton complexes (\cite{witten1982supersymmetry,witten1984holomorphic,wu2003instanton,jayasinghe2024holomorphic}) when there is $\mathcal{N}=2$ supersymmetry (abbreviated as SUSY), (c.f. \cite[\S 3]{witten1982constraints}).

The de Rham Witten instanton complexes have been extended to a large class of metrics that appear naturally on stratified pseudomanifolds, with the work in \cite{Jesus_Dunkl_2014,Jesus_Dunkl_2015,Jesus2018Wittens,Jesus2018Wittensgeneral,ludwig2013analytic,ludwig2017comparison,ludwig2017index}, establishing important analytic results, including spectral theory of associated Laplace-type operators associated to non-compact model geometries appearing at critical points of Morse functions at singularities. In \cite{Bei_2012_L2atiyahbottlefschetz}, the local Lefschetz numbers for the de Rham complex were expressed near singular fixed points of self-maps in terms of finite dimensional local cohomology groups. 

One characterization of this article is that it extends the local invariants described above from de Rham complexes to more general Hilbert complexes with $\mathcal{N}=2$ SUSY, on singular models with group actions, and with \textit{radial Morse functions} that we introduce. It was motivated by noticing that some of the analytic results established in \cite{Jesus_Dunkl_2014,Jesus_Dunkl_2015,Jesus2018Wittens,Jesus2018Wittensgeneral} in the study of the de Rham complexes could be used to study the general case. In \cite{jayasinghe2023l2}, an ansatz introduced by Cheeger in \cite{cheeger1983spectral}, in the study of the Hodge Laplacian on non-compact cones was generalized to Dolbeault complexes (with Witten deformation in \cite{jayasinghe2024holomorphic}). Moreover the local Hilbert complexes associated to truncated model geometries (tangent cones) at fixed points of self-maps and critical points of (K\"ahler Hamiltonian) Morse functions were studied, showing that the associated local Lefschetz numbers/Morse polynomials could be used to understand the global versions, and the \textit{same formulations seem to go through for more general spaces including those studied here}. The many examples and applications studied in \cite{jayasinghe2024witten,jayasinghe2023l2} indicate important connections to various fields. The work here establishes rigorous foundations for the study of SUSY Quantum Mechanics (QM), geometric quantization and equivariant localization on stratified pseudomanifolds and is a stepping stone on the way to extending better known results in those topics to singular spaces.

We develop intricate arguments using the $\mathcal{N}=2$ to understand the cohomology and the spectrum of the Laplace-type operators associated to these complexes using Cheegers ideas mentioned above, involving the study of 1-dimensional Schr\"odinger operators and Sturm-Liouville operators, which then allows the formulation of renormalized trace formulas for the non-Fredholm Hilbert complexes. We study a few examples at the end, referring to \cite{jayasinghe2023l2,jayasinghe2024holomorphic} for a broader collection of examples and applications in the conic setting, that can be extended more generally with the results here.

It is well known that the link (or cross-section) of a K\"ahler cone has what is known as a \textbf{Sasaki structure}, which appears in many studies across geometry and physics. In our study of analysis with K\"ahler structures on spaces with singular warped product metrics we introduce many notions including \textbf{Reeb type metrics, Reeb rescaling functions} which generalize those encountered on K\"ahler cones, and we relate them to holomorphic functions, local cohomology groups and other analytic objects on such spaces.

\subsection{Geometric setting and main and results}

In this article we study Hilbert complexes, specifically twisted de Rham and Dolbeault complexes with $\mathcal{N}=2$ SUSY on topological cones $\widehat{X}=C(Z)$ that are local models at singular fixed points, equipped with \textbf{singular warped product metric} (abbreviated as \textbf{swp} metrics)
\begin{equation}
\label{warped_product_metric}    
    dx^2+\sum_j f^2_j(x) g_{Z_j}
\end{equation}
where $f_j(x)$ are a smooth positive functions on $(0,\infty)$ that vanish at $x=0$, and where $g_{Z_j}$ is some smooth Riemannian metric on $Z_j$. We encode the symmetries on these spaces in terms of self-maps that give rise to geometric endomorphisms on these spaces, as well as via \textit{radial Morse functions} (see Definition \ref{Definition_radial_Morse_functions}) to which we can associate Witten deformed Hilbert complexes $\mathcal{P}_{W,B,\varepsilon}=(H, \mathcal{D}(P_\varepsilon),P_\varepsilon)$,
\begin{equation}
    0 \rightarrow \mathcal{D}_{W,B}(P_\varepsilon) \xrightarrow{P_\varepsilon} \mathcal{D}_{W,B} \xrightarrow{P_\varepsilon} \mathcal{D}_{W,B}(P_\varepsilon) \xrightarrow{P_\varepsilon} ... \xrightarrow{P_\varepsilon} \mathcal{D}_{W,B}(P_\varepsilon) \rightarrow 0
\end{equation}
where $\varepsilon=0$ is the undeformed case. 
In the undeformed case, we study twisted de Rham complexes 
$\mathcal{P}_{W,B}$
where $P=d_E$ where $E$ is a flat bundle on $X$, $W$ ($W=\max/\min$) denotes ideal boundary conditions at $x=0$ and $B$ (either $N/D$, corresponding to generalized Neumann/Dirichlet conditions) denotes boundary conditions at $x=1$ which fix an extension $\mathcal{D}_{W,B,\varepsilon}(d_E)$ of the de Rham operator $d_E$ acting on smooth forms with coefficients in $E$ and compact support in the interior of ${X}$, and together with the adjoint operator it determines a self-adjoint extension of the Hodge de Rham Dirac operator $D=d_E+d^*_E$. If there is an adapted complex structure on $X$, we define twisted Dolbeault complexes
$\mathcal{P}_{W,B}$ with $P=\overline{\partial}_E$ where $E$ is an adapted Hermitian vector bundle, $W$ denotes ideal boundary conditions at $x=0$ and $B$ denotes boundary conditions at $x=1$ which fix an extension of the Dolbeault operator, determining a self-adjoint extension of the Dolbeault-Dirac operator $D_E=\overline{\partial}_E + \overline{\partial}^*_E$. In this case, the generalized Neumann condition is the del-bar Neumann condition.

For the radial Morse functions in Definition \ref{Definition_radial_Morse_functions} (even without a K\"ahler structure, for the de Rham case) we denote the deformed complexes as $\mathcal{P}_{W,B,\varepsilon}$ where $\varepsilon$ is a semi-classical parameter (akin to the inverse temperature and the \textit{semi-classical limit is $\varepsilon \rightarrow \infty$}), and denote the deformed de Rham/ Dolbeault operators by $P_{\varepsilon}:=e^{-\varepsilon h} P e^{\varepsilon h}$. The corresponding Laplace-type operators $\Delta_{\varepsilon}=(P_{\varepsilon} + P^*_{\varepsilon})^2$ can be expanded as
\begin{equation}
\label{equation_deformed_Laplace_operator_intro_1}
    \Delta_{\varepsilon}= \Delta +\varepsilon Kh'' + \varepsilon^2 |dh|^2, \quad \Box_{\varepsilon}=\frac{1}{2} \Delta_{\varepsilon} + \varepsilon \sqrt{-1} L_V
\end{equation}
for the de Rham and Dolbeault cases respectively, where $K=\pm 1$ on the model spaces we study depending on the types of forms (which we will clarify later).  For the Dolbeault case we only do this if the swp metric on $X$ is adapted K\"ahler, and $X$ is equipped with a K\"ahler Hamiltonian vector field $V$ generating an $S^1$ action and satisfying $\iota_V \omega=dh$ where $\omega$ is the adapted K\"ahler form. Then the radial Morse function $h$ is the called the \textbf{Hamiltonian function}. We show in Subsection \ref{Subsection_adapted_geometric_structures} that if $J(\partial_x)=V/f_0(x)$ where $J$ is the corresponding adapted complex structure, then the Hamiltonian $h(x)$ is determined by the \textbf{Reeb rescaling function} $h'(x)=f_0(x)$ up to an additive constant (see Definition \ref{Definition_Reeb_type_notions}). If the K\"ahler action lifts to one on $E$, then we denote by $\sqrt{-1} L_V$ the infinitesimal action of the Hamiltonian vector field $V$ on sections of $E$ (in the case of the trivial bundle $L_V$ is the Lie derivative). Since $L_V$ commutes with $\Delta_{\varepsilon}$, it suffices to study the eigensections of the operator $\Delta_{\varepsilon}$ to understand the eigensections of $\Box_{\varepsilon}$.

We treat the de Rham and Dolbeault complexes on the same footing as far as possible, as Hilbert complexes $\mathcal{P}_{W,B,\varepsilon}=(H,\mathcal{D}_{W,b,\varepsilon}(P_\varepsilon), P_\varepsilon)$ where the differentials $P_\varepsilon$ satisfy an \textit\textbf{{independence from the metric}} property described in Remark \ref{Remark_independence_properties}, and where $D=P+P^*$ is a Dirac-type operator. These complexes are $\mathcal{N}=2$ SUSY QM systems, where the SUSY operators are $Q_1=P+P^*, Q_2= \sqrt{-1}(P-P^*)$, where the corresponding nilpotent operators are $Q_-=P$, $Q_+=P^*$ up to constants. In this setting, the corresponding Laplace-type operators $\Delta=Q_1^2=Q_2^2$ give an isomorphism between exact/co-exact eigensections (those in the null space of $P$ and $P^*$ respectively) of $\Delta$, refining an isomorphism between odd and even degree eigensections when there is only $\mathcal{N}=1$ SUSY corresponding to a Dirac type operator which is interpreted as an isomorphism between bosonic and fermionic states through the lens of supersymmetry. We discuss this in more detail in Section \ref{Section_Hilbert_complexes}.

The spectrum of these operators can be studied using a separation of variables ansatz corresponding to the $[0,1]_x$ and $Z$ factors of $X$, exploiting the radial symmetry of the model space. We consider sections $u(x)\phi$,
where the sections $\phi=\phi_{(\mu_1,...,\mu_m)}$ satisfy
\begin{equation}
    D_{Z_j}\phi_{(\mu_1,...,\mu_m)}=\mu_j \phi_{(\mu_1,...,\mu_m)}
\end{equation}
for each $j$, and are eigensections of each $\Delta_{Z_j}$ with eigenvalue $\mu^2_j$.

\begin{remark}
\label{Remark_intro_SL_organization}
We consider the singular SL equations
\begin{equation}
\label{equation_Sturm_Liouville_2}
    L_{\varepsilon}[u]=L[u]+ K\varepsilon h''(x) u+h'(x)^2 \varepsilon^2 u =\lambda^2 u, \quad  L[u](x)=-\frac{1}{F(\phi)}(F(\phi) u')'+\sum_j \frac{\mu_j^2}{f_j^2}u
\end{equation}
where $K=\pm 1$, and where $F(\phi)$ are the \textit{adjoint rescaling functions} that we will introduce in \eqref{equation_Rescaling_operator_2_adjoint_rescaling_function}. These are functions that depend on the swp metric and the \textit{multi-degree} of forms $\phi$ used in the ansatz described above. Here the $\mu^2_{j}$ are the eigenvalues of $\Delta_{Z_j}$ on the forms $\phi$, and $\mu^2=\sum_{j} \mu^2_{j}$ are the eigenvalues of $\Delta_Z$ appearing in Theorem \ref{Theorem_main_spectral_intro_version}. 
\end{remark}

If the SL problems with the domains induced by those for the Laplacians on $X$ have discrete spectrum, then we say that the singular warped product metric is of \textit{\textbf{discrete type}} (see Definition \ref{Definition_metrics_discrete_type}).
Our main result is the following.

\begin{theorem}
\label{Theorem_main_spectral_intro_version}
Given a twisted de Rham/Dolbeault complex (with Witten deformation for $\varepsilon>0$), $\mathcal{P}_{W,B,\varepsilon}=(H, \mathcal{D}(P_\varepsilon),P_\varepsilon)
$ on $X$ with a swp metric of discrete type for the case where $\varepsilon=0$, of a Morse function of discrete type for the case when $\varepsilon>0$ (and is K\"ahler Hamiltonian in the case of a twisted Dolbeault complex). We denote by $\Delta_\varepsilon$ the Witten deformed Hodge Laplacian, where $\varepsilon=0$ corresponds to the undeformed complex. Then for any such complex for any fixed $\varepsilon$
\begin{enumerate}
    \item there exists an orthonormal basis $\{\psi_{n,k}\}_{n \in \mathbb{N}, k \in \mathbb{N}}$ where each $\psi_{n,k}=u_{n,k} \phi_{n}$ is an eigensection of $\Delta_{\varepsilon}$ (for each degree $q$) where $u_{n,k}$ is a form on $(0,1)_x$ and $\phi_{n}$ is an eigensection of $\Delta_Z$ with eigenvalue $\mu^2_n$ as described in Remark \ref{Remark_intro_SL_organization},
    \item the sections $\psi_{n,k}$ are eigensections of $\Delta_{\varepsilon} + \Delta_Z$ (in the same domain as that for $\Delta_{\varepsilon}$) and this operator has discrete spectrum, with eigenvalues $\lambda_{n,k}^2 + \mu^2_{n}$, where $\lambda_{n,k}^2$ are eigenvalues of the operator $\Delta_{\varepsilon}$,
    \item for each $P_\varepsilon$ co-exact eigensection $\psi_{n,k}$ with positive eigenvalue $\lambda^2_{n,k}$ for $\Delta_{\varepsilon}$, $\frac{1}{\lambda_{n,k}} P_\varepsilon\psi_{n,k}$ is an exact eigensection with the same eigenvalues for $\Delta_{\varepsilon}^{q}$ and $\Delta_Z$.
\end{enumerate}
\end{theorem}

We delve into details about the cohomology of the complex in Subsection \ref{subsection_spec_theory_and_cohomology}.
Given a self-map $f$ of $X$ that is generated by the flow of a K\"ahler Hamiltonian isometry, the pullback $f^*$ induces an endomorphism $T$ on these Hilbert complexes, and we call them geometric endomorphisms. We define \textbf{polynomial Lefschetz heat supertrace functions}
\begin{equation}
    \mathcal{L}(\mathcal{P},T,t,s)(b)
    :=\sum_{q=0}^n b^q Tr \Big(T \circ e^{-t\Delta}e^{-s(\Delta+\sqrt{\Delta_Z})} \Big)|_{\mathcal{P}}
\end{equation}
and \textbf{polynomial Lefschetz supertrace functions}
\begin{equation}
    L(\mathcal{P},T,s)(b)
    :=\sum_{q=0}^n b^q Tr \Big(T\circ e^{-s\sqrt{\Delta_Z}} \Big)|_{\mathcal{H}^q(\mathcal{P})}
\end{equation}
analogous to \cite{jayasinghe2023l2} in the wedge setting.
In Theorem \ref{Theorem_Morse_supertrace} we prove a \textbf{renormalized McKean Singer theorem} and show that $\mathcal{L}(\mathcal{P},T,t,s)(-1)=L(\mathcal{P},T,s)(-1)$.
We denote by $\mathbb{Z}_{\geq 0}[\lambda,\lambda^{-1}]$ the \textit{ring of Laurent series in $\lambda$ with non-negative integer coefficients}. %
The polynomial Lefschetz supertrace function is also called the (local) polynomial Morse supertrace function of the Witten deformed twisted Dolbeault complexes we study, when the geometric endomorphism arises from a K\"ahler Hamiltonian isometry.
Given a $S^1_{\theta}$ group action, we will denote the induced geometric endomorphism as $T_{\theta}$, and we will continue to use this notation even when the action extends to a $\mathbb{C}^*$ action.
The following is a restatement of Corollary \ref{Corollary_Morse_supertraces}.

\begin{corollary}
\label{Corollary_Morse_supertraces_intro}
Given a swp metric of discrete type on $X$ that is adapted K\"ahler and is of Reeb type, 
where there is a K\"ahler Hamiltonian Morse function, the polynomial Morse supertrace function is a finite linear combination 
\begin{equation}
\label{equation_Laurent_series_combinations_Morse_intro}
    M(\mathcal{P}_{W,B},T_\theta,s)(b)= \sum_{q=0}^n b^q \sum_{k\in I'} C_{k,\theta} u_{k, s,\theta}
\end{equation}
where $u_{k, s,\theta} \in \mathbb{Z}_{\geq 0}[\lambda,\lambda^{-1}]$, where $\lambda=e^{-s+i\theta}$, $I'$ is a finite indexing set and where each $C_{k,\theta}$ is a real power of $\lambda$.
Additionally we have the following.
\begin{enumerate}
\item If both of the analytic functions (in the variable $s$), $M(\mathcal{P}_{W,B},T_\theta,s)(b)$ and $L(\mathcal{P}_{W,B},T_\theta,s)$ are regular at $s=0$, then
\begin{equation}
    M(\mathcal{P}_{W,B},T_\theta,0)(-1)=L(\mathcal{P}_{W,B},T_\theta,0).
\end{equation}
\item If the Reeb rescaling function $f_0(x)$ is bounding from below (respectively above), 
then $M(\mathcal{P}_{W,N},T_\theta,s)(b)$ converges for $s>0$ (respectively $s<0$), and $M(\mathcal{P}_{W,D},T_\theta,s)(b)$ converges for $s<0$ (respectively $s>0$).
\end{enumerate}
\end{corollary}

\subsubsection{Metrics which are of discrete type}
\label{subsection_prior_results}

Since our results hold for metrics of discrete type, we present some examples of such metrics.
Conic metrics are well known to be of discrete type, and includes smooth cones (cones over spheres with Euclidean volume growth), and the de Rham and Signature complexes were studied by Cheeger \cite{cheeger1983spectral}. The case of horn metrics, for $c_j>1$, has been studied in \cite{cheeger1980hodge,bruning1996signature}.

We show in Section \ref{section_Sturm_Liouville} that any swp metrics where each $f_j(x)=x^{c_j}$ for $c_j>0$ is of discrete type.
This is the form of the metric on a large class of tangent cones (in the sense of Gromov-Hausdorff convergence) at singularities.
Such metrics were already studied in \cite{Jesus2018Wittensgeneral} when $c_j<1$, where the Witten deformed de Rham case was studied, and we show how this suffices for the case of the Dolbeault complex, and thus for all Dirac-type operators when there is a compatible complex structure. 

The Weyl-Peterson metric on the Riemann moduli space is an iterated horn metric (see equation (1) of \cite{MazzeoVasy2014SpectralWeilPeterson}), and on the Teichm\"uller space admits K\"ahler actions where the model action is of the type that we study here (see \cite[\S 7.2]{AlexWright_Mirzakhani_2020}, equation (4.1) in \cite{Dean_Jesse_Quantum_Ergodic_WP_2023} and the discussion there).
Orbifold versions of the Duistermaat-Heckman localization theorem were used by Mirzakhani (see \cite{AlexWright_Mirzakhani_2020}) to prove a conjecture of Witten on those spaces. 
Localization is used to prove various important results in physics, and this becomes more complicated on general stratified spaces when the operators involved are not essentially self-adjoint, including on algebraic varities. This was studied in the case of wedge metrics in \cite{jayasinghe2023l2,jayasinghe2024holomorphic} and the work in this article allows for expanding that work to more general stratified spaces.

\subsection{Organization of paper}

In Section \ref{Section_adapted_geometry_and_Dirac} we describe the basics of adapted geometry. Following the philosophy of Melrose, we define adapted tangent bundles on which the degenerate Riemannian metrics that we study can be realized as non-degenerate bundle metrics, and introduce adapted complex structures and radial Morse functions. We then define adapted Clifford modules and Dirac-type operators for which we study the polarizations given by nilpotent operators for the Hodge Dirac operator, and for any Dirac operator when there are compatible complex structures. We also introduce boundary conditions for these nilpotent operators. We describe the de Rham and Dolbeault operators in more detail before describing Witten deformed versions of them.

In Section \ref{Section_Hilbert_complexes}, we begin by describing some abstract Hilbert complexes, before relating it to SUSY QM systems. We show that the complexes we study here correspond to $\mathcal{N}=2$ SUSY systems, and describe the semi-classical perspective of Witten deformation. We then describe the domains for the operators that we study in this article.

In Section \ref{subsection_Laplacians} we construct the Laplace-type operators corresponding to the Dirac-type operators we study. We show how we can reduce to SL problems using a carefully constructed ansatz, in both the twisted de Rham and Dolbeault complexes that we study, and study how the boundary conditions simplify as well. We also describe the SL problems corresponding to the Schr\"odinger type operators arising in Witten deformation.

In Section \ref{section_Sturm_Liouville} we study the SL results that are eventually used to prove the main results of this article in Section \ref{Section_Supersymmetric_trace_formulas}.
In Subsection \ref{subsection_Illustrative_examples} we provide a few examples of the results we study, first on a smooth space before providing a singular example, and we refer the reader to references which include computational guides. We briefly go over the connection to equivariant eta invariants in an example.

\begin{justify}
 \textbf{Acknowledgements:} 
I thank Pierre Albin for discussions about this work and his comments. I thank Jacob Shapiro for his careful reading and comments on a previous draft. I thank J\'esus \'Alvarez-L\'opez, Hadrian Quan, Xinran Yu, Manousos Maridakis, Katrina Morgan and Mengxuan Yang for discussions on related topics which motivated some of the work here.
\end{justify}

\section{Adapted geometry and Dirac operators}
\label{Section_adapted_geometry_and_Dirac}

\subsection{Adapted geometry and (K\"ahler Hamiltonian) Morse functions}
\label{Subsection_adapted_geometric_structures}

In this article we study Hilbert complexes on topological cones $\widehat{X}=C(Z)$, a compactification of $(0,\infty)_x \times Z$ at $x=0$ obtained by adding a point at $x=0$, where the \textbf{link} of the cone is $Z=Z_1 \times Z_2 \times... \times Z_m$ where each $Z_j$ is a smooth manifold, and where $\widehat{X}$ is equipped with a \textbf{singular warped product metric} (abbreviated as \textbf{swp} metrics)
\begin{equation}
    dx^2+\sum_j f^2_j(x) g_{Z_j}
\end{equation}
where $f_j(x)$ are a smooth positive functions on $(0,\infty)$ that vanish at $x=0$, and where $g_{Z_j}$ is some smooth Riemannian metric on $Z_j$.

The space $\widehat{X}$ is singular in general (when it is smooth $Z$ is a sphere), and we study the resolution $X_{\infty}=[0,\infty)_x \times Z_1 \times Z_2 \times... \times Z_m$ where the metric is degenerate at $x=0$. 
While we mostly work with truncations $X:=[0,1)_x \times Z_1 \times Z_2 \times... \times Z_m$, we define metrics and Morse functions for $X_{\infty}$, and versions for $X$ can be obtained by restriction.
Given a swp metric we define the \textbf{adapted co-tangent bundle} as follows, the dual bundle of which (with respect to the swp metric) is called the adapted tangent bundle. 
\begin{definition}[Adapted co-tangent bundle]
\label{Definition_adapted_co_tangent_bundle}
Given a warped product metric of the form in \eqref{warped_product_metric} on $[0,1)_x \times Z$, we define the adapted cotangent bundle $^{a}TX$ as the bundle 
\begin{equation}
    dx \oplus f_1(x)T^*Z_1 \oplus f_2(x)T^*Z_2  \oplus ... \oplus f_m(x)T^*Z_m
\end{equation}
where $f_j(x)T^*Z_j$ for any $j \in \{1,...,m\}$ is defined as follows. Given any local co-frame $\{ \theta(j)_1,...,\theta(j)_{l_j} \}$ on a neighbourhood of $Z_j$, we define the adapted co-frame $\{ f_j(x)\theta(j)_1,...,f_j(x)\theta(j)_{l_j} \}$, which by the Serre-Swan Theorem defines a unique bundle on $Z_j$ which we define to be $f_j(x)T^*Z_j$.
\end{definition}

While the swp metrics are degenerate Riemannian metrics (i.e. degenerate bundle metrics of the usual tangent bundle on $X$), they are non-degenerate bundle metrics of the adapted tangent bundle corresponding to the swp metric.

Given an even dimensional space $X=[0,1] \times Z$ with a singular warped product metric and a corresponding adapted cotangent bundle, we can construct the corresponding exterior algebra, the sections of which we shall call adapted forms. If there is an adapted two form $\omega$ that is closed, anti-symmetric and is a non-degenerate as a bi-linear form, we call it an \textbf{adapted symplectic form}.
We call a section $J$ of the endomorphism bundle of the adapted cotangent bundle which satisfies $J^2=-Id$ as an \textbf{adapted almost complex structure}.
If $\nabla J=0$ where $\nabla$ is a Levi-Civita connection with respect to some adapted metric $g$, then we say that the almost complex structure is \textbf{integrable} ($J$ is an \textbf{adapted complex structure}) and the metric $g$ is Hermitian.
If for a given adapted metric $g$ and an adapted complex structure $J$, $\omega(\cdot, \cdot):=g(J \cdot, \cdot)$ is an adapted symplectic form, then we say that $X$ has an \textbf{adapted K\"ahler structure} $(g,J,\omega)$.

Witten investigated deformations of various elliptic complexes corresponding to Morse functions, and vector fields with various properties. In the smooth setting the Morse lemma shows that at any isolated stable critical point of a Morse function $h$, we can pick a disc neighbourhood $X=C_x(S^{k-1})$ with a coordinate chart where the Morse function can be written as $h(0)+x^2$. 
In the geometric setting of this article we will consider the following generalization.
\begin{definition}[\textbf{Radial Morse functions}]
\label{Definition_radial_Morse_functions}
Given a space $X$ with a swp metric $g$ with radial function $x$, we say that a smooth function $h(x)$ on $(0,\infty)$ is a radial Morse function if it satisfies
\begin{equation}
    (h'(x))^2 > h''(x)
\end{equation}
for all $x>C$ for some constant $C>0$, and if $\lim_{x \rightarrow \infty}(h'(x))^2=\infty$.
\end{definition}
This definition is motivated by the form of the potential in the SL problem in \eqref{equation_Sturm_Liouville_2} and the sufficiency of the growth of potentials at $\infty$ of the terms arising there for the SL problem to be in the limit point case at $x=\infty$. 
In the setting when there is an adapted complex structure $J$, we can demand that the swp metric is Hermitian (as a bundle metric on the adapted tangent bundle) and we define the following notions.

\begin{definition}
\label{Definition_Reeb_type_notions}
Given a Hermitian swp metric $g$ on $X$ with respect to an adapted complex structure $J$, if we can find a function $f_0(x)$ such that $J(\partial_x)=(1/f_0(x))V$ where $V$ is a vector field that satisfies $\nabla_{\partial_x} V=0$ with respect to the metric connection $\nabla$, we say that the metric $g$ is one of \textbf{Reeb type}. We call the function $f_0$ as the \textbf{Reeb rescaling function}. For such metrics we define the expression
\begin{equation}
    u_{\nu}(x):=\exp \Big(\int_0^x \frac{\nu}{f_0(y)} dy \Big)
\end{equation}
when the integral converges for $x \in (0,1)$ for any $\nu \neq 0$. We then consider $u_{\nu}$ to be a function of $x$ on $X$.
If $u_{\nu}(x)$ is $L^2$ bounded (with respect to the volume measure corresponding to $g$) on $X$, \textit{only} for integers $\nu > C$ (respectively $\nu <C$) for some $C \in \mathbb{R}$, we will say that the \textbf{Reeb rescaling function is bounding from below (respectively bounding from above)}.

In addition if the swp metric is K\"ahler with respect to the adapted complex structure, and if there exists a function $h(x)$ on $X_{\infty}$ such that $dh=\iota_V \omega$ for an adapted K\"ahler structure, then we say the function $h$ is a K\"ahler Hamiltonian, and call $V$ the K\"ahler Hamiltonian vector field. If in addition $V$ is Killing, we say it generates a K\"ahler Hamiltonian isometry. 
\end{definition}

The motivation for the definitions of bounding for the Reeb rescaling function are clarified in Proposition \ref{proposition_Dolbeault_cohomology} where we explore analytic aspects of these geometric notions in more detail.
In the case where $X$ has a K\"ahler Hamiltonian swp metric where we have that $J(\partial_x)=\frac{1}{f_0(x)}V$ for a vector field $V$ which is independent of $x$ (i.e. the Lie derivative of $V$ with respect to $\partial_x$ vanishes). Then the K\"ahler form can be written as 
\begin{equation}
    \omega_X=dx \wedge f_0(x) \alpha +f_{0,D}\omega_D
\end{equation}
where $\omega_D$ is a transverse K\"ahler structure on $Z$, that is transverse to the foliation generated by $V$, and where $\alpha=V^{\flat}$. Since $d\omega_X=0$, we see that $d\alpha=-\omega_D$ and $f_{0,D}'=f_0$.
Then the Hamiltonian condition for a function $h$ is equivalent to $h'(x)=f_0(x)$, i.e. $f_{0,D}=h$ (since we demand vanishing of both functions at $x=0$).
This generalizes the structures in the conic (including smooth) setting studied in \cite{jayasinghe2024holomorphic}.

Recall that the Hodge star operator $\star_X$ for a form is determined by $\omega \wedge (\star_X \eta)=\langle \omega,\eta\rangle dvol_X$. We study some examples to build intuition.

\begin{example}
\label{Example_complex_structure}
Consider the simple case of $C(\mathbb{T}^3)$ with the metric
\begin{equation}
    dx^2+x^6d\theta_1^2+x^4(d\theta_2^2+d\theta_3^2)
\end{equation}
which has an adapted almost complex structure 
\begin{equation}
    J=dx \otimes \frac{1}{x^3} \partial_{\theta_1} - x^3 d\theta_1 \otimes \partial_x + x^2 d\theta_2 \otimes \frac{1}{x^2}\partial_{\theta_3} - x^2 d\theta_3 \otimes \frac{1}{x^2}\partial_{\theta_2}
\end{equation}
which is integrable. Observe that $J$ maps
$\partial_x$ to $\frac{1}{x^3}\partial_{\theta_1}$, $\frac{1}{x^3}\partial_{\theta_1}$ to $\partial_x$, $\frac{1}{x^2}\partial_{\theta_2}$ to $\frac{1}{x^2}\partial_{\theta_3}$ and $\frac{1}{x^2}\partial_{\theta_3}$ to $-\frac{1}{x^2}\partial_{\theta_2}$.
There is an adapted form $\omega=dx \wedge x^3d\theta_1 + x^2 d\theta_2 \wedge x^2 d\theta_3$, and $\omega \wedge \omega$ is the volume form for the metric. It is easy to see that
$\star_X \omega =\omega$, i.e. $\omega$ is a self-dual 2-form. However it is easy to check that $d\omega \neq 0$ so $\omega$ is not a K\"ahler form.
\end{example}

\begin{example}
\label{Example_Kahler_horn}
The topological cone over the circle with the metric $dx^2+x^{2c}d\theta^2$ for $c>0$, with the symplectic form given by the volume form is K\"ahler due to dimensionality reasons, similar to the smooth setting. The volume form is $dx \wedge x^{c}d\theta$, and the associated complex structure is $J=dx \otimes \frac{1}{x^c}\partial_{\theta} -x^{c}d\theta \otimes \partial_x$. 

A K\"ahler Hamiltonian Morse function (that we define formally in the next subsection) is obtained by observing that $V=\partial_{\theta}$ generates a flow that preserves the metric and the K\"ahler structure, and is a K\"ahler Hamiltonian on $X^{reg}$ with Hamiltonian function $h=\frac{1}{c+1} x^{c+1}$. 
\end{example}

\subsection{Dirac-type operators}
\label{Subsection_Dirac_operators_detail}

We now introduce adapted Dirac-type operators on $X$.
\begin{definition}
\label{adapted_Clifford_module}
Let $g$ be an swp metric on $X$. An \textbf{\textit{adapted Clifford module}} consists of
\begin{enumerate}
    \item a complex vector bundle $E \rightarrow X$
    \item a Hermitian bundle metric $g_E$
    \item a connection $\nabla^E$ on $E$, compatible with $g_E$
    \item  a bundle homomorphism from the ‘adapted Clifford algebra’ into the endomorphism bundle of $E$,
    \begin{equation*}
        {cl} : \mathbb{C}l_{a}(X)=\mathbb{C} \otimes Cl(\prescript{{a}}{}{T}^*X, \prescript{{a}}{}{g}) \rightarrow End(E)
    \end{equation*}
    compatible with the metric and connection in that, for all $\theta \in \mathcal{C}^{\infty}(X;\prescript{{a}}{}{T}^*X)$,
    \begin{itemize}
        \item $g_E({cl}(\theta)\cdot, \cdot) = -g_E(\cdot, {cl}(\theta) \cdot)$
        \item $\nabla^E_{W} {cl}(\theta)= {cl}(\theta) \nabla^E_W + {cl}(\nabla^{X}_W \theta)$ as endomorphisms of $E$, for all $W \in \prescript{{a}}{}{T}X$, and where $\nabla^X$ is the connection on forms induced by the Levi-Civita connection corresponding to the swp metric $g$.
    \end{itemize}
\end{enumerate}

This information determines an \textit{\textbf{adapted Dirac-type operator}} by
\begin{equation}
\label{equation_wedge_Dirac_operator_11}
D_{X}:\mathcal{C}^{\infty}_c(\mathring{X};E) \xrightarrow{\nabla^E} \mathcal{C}^{\infty}_c(\mathring{X};T^*X \otimes E) \xrightarrow{{cl}} \mathcal{C}^{\infty}_c(\mathring{X};E) 
\end{equation}
where we have used that $T^*X$ and $\prescript{{a}}{}{T}^*X$ are canonically isomorphic over $\widehat{X}^{reg}=\mathring{X}$, the interior of $X$.
\end{definition}

Dirac-type operators are first order operators that square to generalized Laplace-type operators (those which have principal symbol given by the (adapted) metric). Proposition 3.38 of \cite{berline2003heat} shows that there is a Clifford algebra corresponding to any given Dirac operator (that squares to a generalized Laplace operator), and it is easy to check that this extends for the operators in the adapted setting.

We demand that $E$ admits a $\mathbb{Z}_2$ grading $E=E^+ \oplus E^-$, compatible in that it is orthogonal with respect to $g_E$, parallel with respect to $\nabla^E$, and odd with respect to ${cl}$. 
We can equivalently express this in terms of an involution on $E$.
\begin{definition}
\label{Grading operator}
A $\mathbb{Z}_2$ \textbf{\textit{grading operator}} on a Dirac bundle $E$ is an involution
\begin{equation*}
    \gamma \in C^{\infty}(\mathring{X}; End(E))
\end{equation*}
satisfying
\begin{equation*}
    \gamma^2=Id, \hspace{3mm} \gamma {cl}(v)= -{cl}(v) \gamma, \hspace{3mm} \nabla^E\gamma = 0, \hspace{3mm} \gamma^*=\gamma,
\end{equation*}
which gives a splitting of the bundle $E$ into the direct sum of the spaces $E^{\pm}=\{v \in E : \gamma(v)=\pm v \}$.
\end{definition}

We can always construct such a \textbf{grading/chirality/supersymmetry} operator on even dimensional spaces as follows.
Let $\{\theta^i\}_{i=1}^d$ be an oriented $g_{a}$-orthonormal frame for the adapted cotangent bundle $^{a}T^*X$.
Then the operator 
\begin{equation}
\label{equation_even_dimensional_spaces_chirality_operator}
    \gamma= i^p \, \Pi_{i=1}^d cl(\theta^i) \in C^{\infty}(\mathring{X}; End(E))
\end{equation}
where $p=d/2$ is a grading operator. This is a well known construction for Dirac bundles associated to Clifford modules on smooth manifolds (see \cite[Lemma 3.17]{berline2003heat}) and it is easy to see it extends to the setting we study.
In local coordinates, we can express the Dirac operator constructed in equation \eqref{equation_wedge_Dirac_operator_11} as, \begin{equation}
\label{equation_local_frame_Dirac_operator}
    D_{X}= \sum_{i=0}^n {cl}(\theta^i)\nabla^E_{(\theta^i)^{\#}}.
\end{equation}

For singular warped product metrics of the form in \eqref{warped_product_metric} that we study on $X$, given an orthonormal frame $\{e_j^1,...,e_j^n\}$ of the cotangent bundle on a chart on a factor $Z_j$ of the link $Z$, we can construct operators
\begin{equation}
\label{equation_Dirac_intermediary_1}
    {D}_{Z_j}=\sum_i cl(e_j^{i}) \nabla^E_{e_j^{i \#}}, \quad \widetilde{D}_{Z_j}=\sum_i cl(f_j(x)  e_j^i) \nabla^E_{e_j^{i \#}}
\end{equation}
and with such frames on all factors, we can construct an orthonormal frame (with respect to the swp metric)
\begin{equation}
    \{ dx\}, \{ \{f_j(x) (e_j^{i}) \}_{i=1}^{\dim Z_j} \}_{j=1}^m
\end{equation}
for the adapted cotangent bundle on $X$, and we can construct the Dirac-type operator
\begin{equation}
\label{equation_Dirac_intermediary_2}
    D_{X}=D_1+D_2, \quad \text{ where   }  D_1=cl(dx) \nabla^E_{\partial_x}, \quad D_2=\sum_{j=1}^{m} \frac{1}{f_j}\widetilde{D}_{Z_j}.
\end{equation}

In the case of de Rham and Dolbeault operators, the local form of these operators depend on the chosen coordinates and frame for the bundle, and whether we represent them as operators acting on the adapted forms (sections of the adapted exterior algebra), or on the forms (sections of the usual exterior algebra on $X$, where $X$ is considered as a manifold with boundary). In the next subsections, we will be studying the local forms of these operators represented as those acting on the usual forms on $X$. In particular we will work out $cl(dx)\nabla^E_{\partial_x}$ and its square in detail, convenient for understanding the Laplace-type operator.
We use the following choices of rescalings for forms in the analysis of these operators.

\begin{definition}[Multi-degrees and rescaling factors]
\label{Definition_rescaling_function}
In the setting described above, if a differential form (possibly with twisted coefficients) $\phi_j$ has degree $k_j$ in $Z_j$, we define the rescaling factor
\begin{equation}
    R(\phi_j)=f_j(x)^{k_j}.
\end{equation}

If a form $\phi$ on $Z$ can be written as a wedge product of forms $\phi_j$ on $Z_j$ where each form $\phi_j$ has fixed degree $k_j$, we say that the $\phi$ is of \textbf{fixed multi-degree} $\underline{k}=(k_1,...,k_m)$, and define the \textbf{rescaling factor} to be 
\begin{equation}
    R(\phi)=\Pi_{j=1}^m R(\phi_j) =\Pi_{j=1}^m f_j(x)^{k_j}.
\end{equation}
\end{definition}

As an example, the rescaling functions of $d\theta_1$, $d\theta_2$ and $d\theta_1 \wedge d\theta_2$ in Example \ref{Example_complex_structure} are $x^3, x^2$ and $x^5$ respectively.

\subsubsection{Polarizations, nilpotent refinements and boundary conditions}
\label{subsection_polarizations_nilpotent}

In this article we study Dirac operators which can be written as $D_X=P+P^*$ where $P$ is a nilpotent operator ($P^2=0$) and $(P^*)^2=0$, which refine the $\mathbb{Z}_2$ grading into a $\mathbb{Z}$ grading and leads to a twisted de Rham/Dolbeault complex. In these cases we can write
\begin{equation}
    P=\sum_i A_i V_i^\flat \wedge \nabla^E_{V_i}
\end{equation}
where $V_i$ is a set of orthogonal vector fields on $X$, and where $A_i$ are coefficients in a bundle which is flat/holomorphic. This is the case for twisted de Rham and Dolbeault operators, as we will see in the constructions we present in the next subsections.

\begin{remark}[Independence property]
\label{Remark_independence_properties}
It is well known that the de Rham (Dolbeault) operators are independent of the metric (Hermitian metric) used to pick an orthonormal frame in which they can be written locally. We observe that when $A_i$ are locally constant, if the vector fields $V_i$ are rescaled by some functions $1/f_i(x)$ that satisfy $\nabla^E_{V_i}f_j(x)=0$ for all $i,j$, then $P$ can be written as $P=\sum_i A_i f_i(x) V_i^\flat \wedge \nabla^E_{\frac{1}{f_i(x)}V_i}$, and $P$ commutes with $\nabla^E_{\partial_x}$.
\end{remark}

We refer to \cite{bandara2024geometric} for investigations showing that the nilpotency is key to the metric independence property on smooth manifolds with certain non-smooth metrics.

An adapted complex structure picks out a polarization of the bundle $\prescript{{a}}{}{T}^*X \otimes \mathbb{C}$ (where we can extend the adapted metric by complex linearity), which is the distribution $W$ generated by the anti-holomorphic covector fields (see Definition 3.18 of \cite{berline2003heat}), and we can construct a unique $\mathbb{Z}_2$ graded Clifford module $S=S^+ \oplus S^-$ called the spinor module, such that 
\begin{equation}
    cl(\prescript{{a}}{}{T}^*X \otimes \mathbb{C}, \prescript{{a}}{}{g}) \otimes \mathbb{C} \cong \text{End}(S)
\end{equation}
where $S$ is simply the exterior algebra of the distribution $W$ (see Proposition 3.19 \cite{berline2003heat}, c.f. \cite{epstein2006subelliptic}). Then 
\begin{equation}
    cl(w)s = \sqrt{2} w \wedge s, \quad   cl(\overline{w})s = -\sqrt{2} \iota_{\overline{w}^\#} s
\end{equation}
if $w \in W$ ($\overline{w} \in \overline{W}=W^*$), and the chirality operator in \eqref{equation_even_dimensional_spaces_chirality_operator} reduces to $(-1)^q$ where $q$ is the degree of forms.
This shows that if we have an adapted complex structure, any Dirac-type operator can be understood in terms of twisted Dolbeault-Dirac operators which we define in more detail in Subsection \ref{subsection_Dolbeault_spin_c_Dirac}.

Given such nilpotent operators participating in a Hilbert complex, we define the \textbf{generalized Neumann/Dirichlet boundary conditions} for the Dirac operator (which we also call \textbf{absolute/relative boundary conditions}) to be
\begin{equation}
\label{boundary_conditions_for_Dirac}
    \sigma(P^*)(dx)u|_{x=1} =0, \quad \sigma(P)(dx)u|_{x=1} =0
\end{equation}
respectively.

\subsubsection{Hodge-Dirac and de Rham operators}
\label{subsection_Hodge_Dirac_operator}

Given an adapted metric, we can construct the corresponding Hodge-de Rham Dirac operator similar to the smooth setting.
We follow \cite[\S 4.3]{Zhanglectures} and introduce the Clifford operators 
\begin{equation}
    cl(u)=u \wedge - \iota_{u^\#}, \quad \widehat{cl}(u)=u \wedge + \iota_{u^\#}
\end{equation}
acting on the \textbf{adapted forms}, where $u$ is an adapted one form, and $cl(u)$ is the Clifford multiplication for the de Rham complex. Zhang's notation is for the Clifford algebra on the tangent space, while we use that on the adapted cotangent bundle by duality. This action extends to forms with coefficients in a flat bundle over $X$ similar to the smooth and wedge settings (see \cite[\S 2]{albin2013novikov}).

Given a flat connection on a bundle $E$, there is an induced connection on forms valued in $E$, which we denote by $\nabla^E$. Taking the anti-symmetrization after acting on forms by $\nabla^E$ defines an operator $d_E$ and this satisfies $d_E^2=0$ (due to the flatness of the connection) similar to the smooth setting. 

Composing $\nabla^E$ with the Clifford action we obtain the twisted Hodge de Rham Dirac operator $D$ acting on sections supported on $\mathring{X}$. The de Rham operator can be written in terms of any orthonormal frame of the adapted cotangent bundle $\{\theta^i\}$ as 
\begin{equation}
    d_E=\theta^i \wedge \nabla^E_{ \theta^{i \#}}.
\end{equation}

\begin{remark}
\label{remark_d_vs_adjoint_rescaling_de_Rham}
It is well known that the de Rham operator is independent choices of Reimannian metrics on smooth spaces, even if they are more easily expressed in orthonormal bases of the tangent bundle. Even in our setting, this continues to be the case. In particular, we can rescale the orthonormal frame of the adapted cotangent bundle by appropriate factors of $x$, and constants to obtain an orthonormal frame $\{\widetilde{\theta}^i\}$ of the cotangent bundle of $X$ with respect to the metric $\widetilde{g}=dx^2+\sum_j g_{Z_j}$, and we can write the de Rham operator as 
\begin{equation}
    d=\widetilde{\theta}^i \wedge \nabla^E_{ \widetilde{\theta}^{i \#}}
\end{equation}
since the rescaling in the tangent and co-tangent bundles cancel off.
However the adjoints $\delta$ of $d$ depend on the choice of metric on the space used in its definition.
\end{remark}

\begin{example}
\label{Example_Dirac_operator_de_Rham}
For the space in Example \ref{Example_complex_structure}, the Hodge de Rham operator takes the form 
\begin{multline}
\label{equation_Dirac_op_de_Rham_example}
    D_X= cl(dx)\nabla^E_{\partial_x}+ (x^3d\theta_1 \wedge - \iota_{\frac{1}{x^3} \partial_{\theta_2}}) \nabla^E_{\frac{1}{x^3}\partial_{\theta_1}} \\+(x^2d\theta_2 \wedge - \iota_{\frac{1}{x^2} \partial_{\theta_2}}) \nabla^E_{\frac{1}{x^2}\partial_{\theta_2}} +(x^2d\theta_3 \wedge - \iota_{\frac{1}{x^2} \partial_{\theta_3}}) \nabla^E_{\frac{1}{x^2}\partial_{\theta_3}}
\end{multline}
when $E=\mathbb{C}$.
It is well known that restricted to $X^{reg}$ we can write the operator as $D=d+d^*$ where
$d$ is the de Rham operator and $d^*$ its formal adjoint. 
The de Rham operator is independent of the metric, and this is why we can write $d$ on this space as 
\begin{multline}
\label{equation_de_Rham_example_auxiliary}
    dx \wedge  \nabla^E_{\partial_x}+ x^3d\theta_1 \wedge \nabla^E_{\frac{1}{x^3}\partial_{\theta_1}} +x^2d\theta_2 \wedge \nabla^E_{\frac{1}{x^2}\partial_{\theta_2}} +x^2d\theta_3 \wedge  \nabla^E_{\frac{1}{x^2}\partial_{\theta_3}}\\=dx \wedge  \nabla^E_{\partial_x}+ d\theta_1 \wedge \nabla^E_{\partial_{\theta_1}} +d\theta_2 \wedge \nabla^E_{\partial_{\theta_2}} +d\theta_3 \wedge  \nabla^E_{\partial_{\theta_3}}
\end{multline}
where we have used the orthonormal frames on the usual tangent/cotangent bundles to express the operator as opposed to the rescaled adapted bundles. It is the adjoint of the operator $d^*$ which depends on the metric.
The operator $cl(dx)=dx\wedge +((-1)^{km+m+1}\star dx \wedge \star)$ (on forms of degree $k$ and $m$ is the dimension of the space) where the Hodge star operator is with respect to the swp metric.
\end{example}

Given a form $\phi$ on $Z$ we can extend it to a form (section of $\Lambda^*X$) on $X$ as $\pi_Z^*\phi$ where $\pi_Z$ is the projection from $X$ to the link $Z$ (at any fixed value of the radial variable $x$). While $cl(dx)$ acts by $dx$ on $\phi$, the action on $\omega=dx \wedge \phi$ is more complicated, but can be expressed using the rescaling factor.
We assume here that $\phi$ is of a fixed multi-degree (see Definition \ref{Definition_rescaling_function} where the rescaling factors are also introduced), whence we can write 
\begin{equation}
    \star_X \omega =\star_X \Big(\frac{1}{R(\phi)} dx \wedge R(\phi) \phi \Big)=\frac{1}{R(\phi)}R(\star_Z \phi) \star_Z \phi
\end{equation}
where $\star_X$ is the Hodge star operator for the swp metric and where $\star_Z$ is the Hodge star operator for the metric on the link $Z$ appearing in the swp metric. It is easy to check that $R(\star_Z \phi)=R(dvol_Z)/R(\phi)$.

Since $\delta=d^*=(-1)^{km+m+1} \star d\star$, we see that $cl(dx)\nabla_{\partial_x}$ acts on $\omega$ by
\begin{equation}
    cl(dx)\nabla_{\partial_x} \omega = (-1)^{km+m+1} \star_X \Big( dx \wedge \nabla_{\partial_x} \frac{R(dvol_Z)}{R^2(\phi)} \star_{Z}\omega\Big) =
    -\Big(\frac{R^2(\phi)}{R(dvol_Z)}\Big)\Big(\frac{R(dvol_Z)}{R^2(\phi)}\Big)' \phi
\end{equation}
and similarly we can compute
\begin{equation}
    cl(dx) \nabla_{\partial_x} (u_1(x)\phi+ u_2(x)dx \wedge \phi)=dx \wedge u_1'(x) \phi- \Big(\frac{R^2(\phi)}{R(dvol_Z)}\Big) \Big(u_2 \frac{R(dvol_Z)}{R^2(\phi)}\Big)' \phi
\end{equation}
where the second term can be expressed as
\begin{equation}
\label{equation_definition_delta_1}
    \delta_1(u_2(x)dx \wedge \phi)=-\Bigg(u'_2 + \Big(\frac{R^2(\phi)}{R(dvol_Z)}\Big) \Big(u_2 \frac{R(dvol_Z)}{R^2(\phi)}\Big)' \Bigg)\phi=-\frac{1}{F}(u_2 F)'\phi
\end{equation}
where we define
\begin{equation}
\label{equation_Rescaling_operator_2_adjoint_rescaling_function}
    \delta_1:=D_1-d_1, \quad d_1:=dx \wedge \partial_x, \quad  F(\phi)(x):=\frac{R(dvol_Z)}{R^2(\phi)}.
\end{equation}
we call the function $F(\phi)$ for a form of fixed multi-degree $\phi$ as the \textbf{adjoint rescaling function} of $\phi$.

Let us consider two forms $u_1(x)\psi_1$ and $u_2 dx \wedge \psi_2$ where ${\partial_x} \psi_i =0$ for $i=1,2$.
Using the above computations, it is easy to see that the action of $(cl(dx)\nabla^E_{\partial_x})^2=(d_1+\delta_1)^2=D_1^2$ on these forms is given by
\begin{equation}
\label{equation_radial_derivatives_1}
    D_1^2 (u_1(x)\psi_1)= \delta_1 d_1 (u_1(x)\psi_1)=-\frac{1}{F}(u_1'F)'\psi_1, 
\end{equation}
where we denote $F(\psi_1)(x)$ by F and  
\begin{equation}
\label{equation_radial_derivatives_2}
    D_1^2 (\frac{1}{F} u_2 dx \wedge \psi_2)= d_1 \delta_1(\frac{1}{F} u_2 dx \wedge \psi_2)=-(\frac{1}{F} u'_2)' dx \wedge \psi_2
\end{equation}
where we denote $F(\psi_2)(x)$ by F where we have rescaled the form for convenience.

\begin{remark}
\label{Remark_SUSY_Hodge_star_rescalings}
Given forms $\psi_1, \psi_2$ satisfying $\star_Z \psi_1=\psi_2$, we observe that $R(dvol_Z)=R(\psi_1)R(\psi_2)$, and deduce from the definition of $F$ that $F^{-1}(\psi_1)=F(\psi_2)$, and that $F^{-1}(\psi_1)=R^2(\psi_2)/R(dvol_Z)$. This can be used to see that the action of $d_1$ and $\delta_1$ are intertwined by $\star_X$ (up to a sign), which is a consequence of the fact that $d$ and $\delta$ are intertwined by $\star_X$ (up to a sign).
We will use this in the study of Laplace-type operators in Section \ref{subsection_Laplacians}.    
\end{remark}

\begin{remark}
\label{remark_Witt_condition_1}
    This generalizes computations of the Hodge Dirac operator and the Laplace-type operator worked out in the proof of Proposition 5.33 of \cite{jayasinghe2023l2} in the conic (wedge) case. We observe that when $R^2(\phi)=R(dvol_Z)$, the computations above simplify, which in the conic case happens when $\phi$ is a middle dimensional form on the link $Z$ (if $Z$ is even dimensional). Thus middle dimensional harmonic forms $\phi$ on the link (with respect to $\Delta_Z$) extend to harmonic forms which are not in the minimal domain of the Hodge Laplacian, and the \textbf{Witt} condition (see \cite{Albin_hodge_theory_cheeger_spaces}) is not satisfied.
\end{remark}

\subsubsection{Dolbeault and Dolbeault-Dirac operators}
\label{subsection_Dolbeault_spin_c_Dirac}

We follow the conventions in \cite{wu1998equivariant}, referring to chapters 3 and 5 of \cite{duistermaat2013heat} for a more detailed construction of the spin$^\mathbb{C}$ Dirac operator in the smooth setting. In the K\"ahler case this is equal the Dolbeault-Dirac operator. We define a Clifford algebra structure on the adapted cotangent bundle which determines this operator. 

Given an adapted spin$^\mathbb{C}$ structure on $X$ and a compatible almost complex structure, the complexified adapted co-tangent bundle $^{a} T^*_{\mathbb{C}}X$ has an orthogonal splitting into the holomorphic and anti-holomorphic tangent bundles as $^aT^*X^{1,0} \oplus ^aT^*X^{0,1}$ induced by the adapted complex structure.
Given $\xi \in \Gamma(^{a} T^*_{\mathbb{C}}X)$ where $^{a} T^*_{\mathbb{C}}X=^{a} T^*X \otimes_{\mathbb{R}} \mathbb{C}$, we can write it as $\xi=\xi^{1,0}+\xi^{0,1}$ where $\xi^{1,0} \in \Gamma(^{a} T^*X^{1,0})$ and $\xi^{0,1} \in \Gamma(^{a} T^*X^{0,1})$.
Then the Clifford action of the complexified Clifford algebra is defined by
\begin{equation*}
cl(\xi^{0,1})=\sqrt{2}(\xi^{0,1}) \wedge, \quad cl(\xi^{1,0})=-\sqrt{2} i_{(\xi^{1,0})^{\#}},
\end{equation*}
where $(\xi^{1,0})^{\#}=v^{0,1} \in \Gamma(^{a} TX^{0,1})$ corresponds to $\xi^{1,0}$ via the K\"ahler swp metric. Given a Hermitian bundle $E$ on $X$ equipped with a compatible connection, this extends to a Clifford action on $End(F)$ where $F= \oplus_{q=0}^n \Lambda^{q} (^{a} T^*X^{1,0}) \otimes E$ by $cl \otimes Id$, which we denote by $cl$ with abuse of notation. 

Given a Hermitian connection on a holomorphic bundle $E$, we can extend it to a Hermitian connection on 
$F=\Lambda^{q} (^{a} T^*X^{1,0}) \otimes E$ which we denote by $\nabla^F$. 
Composing $\nabla^F$ with the Clifford action we obtain the spin$^\mathbb{C}$ Dirac operator $D$ acting on sections supported on $\mathring{X}$.

Given an orthonormal frame of the adapted cotangent bundle $\{\theta^i \}$, we have the local form of the Dirac operator given in \ref{equation_local_frame_Dirac_operator} which is the \textbf{spin$^{\mathbb{C}}$ Dirac operator} corresponding to the Clifford structure.
It is well known that in the K\"ahler case we can write the operator as $D=\sqrt{2}(\overline{\partial}+\overline{\partial}^*)$ using
\begin{equation}
\label{equation_d_bar_spin_c_correspondence_1}
\bar{\partial}_E=\frac{1}{2 \sqrt{2}} \sum_{i=1}^{2 n} cl(\theta^i-\sqrt{-1} J \theta^i) \nabla^F_{(\theta^i-\sqrt{-1} J \theta^i)^{\#}},\quad \bar{\partial}^{*}_E=\frac{1}{2 \sqrt{2}} \sum_{i=1}^{2 n} cl(\theta^i+\sqrt{-1} J \theta^i) \nabla^F_{(\theta^i-\sqrt{-1} J \theta^i)^{\#}}.
\end{equation}
where $J$ is a complex structure that tames the adapted K\"ahler form where the precise expressions arising with $cl(dx)$ can be worked out in a fashion similar to those for the Hodge Dirac operator in the previous subsection.
If there is only a complex structure, then the Dolbeault operator $P=\bar{\partial}_E$ is still defined by the expression in \eqref{equation_d_bar_spin_c_correspondence_1} above, and the operator $D=P+P^*$ is referred to as the \textbf{Dolbeault-Dirac} operator. 

\begin{example}
\label{Example_Dirac_operator_Dolbeault}
For the space in Example \ref{Example_complex_structure} equipped with the complex structure $J$ that takes $\partial_x$ to $1/x^3 \partial_{\theta_1}$ and $1/x^3 \partial_{\theta_2}$ to $1/x^2 \partial_{\theta_2}$ (this determines a complex structure on the endomorphism bundle of the adapted tangent bundle), we can write the Dolbeault operator $\overline{\partial}$ as
\begin{equation}
\label{equation_Dirac_op_abstract_example}
\frac{1}{2\sqrt{2}} \bigg( (dx - \sqrt{-1} x^3 d\theta_1) \wedge  (\partial_{x}+\sqrt{-1} \frac{1}{x^3}\partial_{\theta_1})+
(x^2 d\theta_2 - \sqrt{-1} x^2 d\theta_3) \wedge (\frac{1}{x^2}\partial_{\theta_2}+\sqrt{-1} \frac{1}{x^2}\partial_{\theta_3}) \bigg)
\end{equation}
where we have denoted by (with abuse of notation) $J$ the adapted complex structure on the adapted cotangent bundle (which maps $dx$ to $x^3 d\theta_1$, and $x^2 d\theta_2$ to $x^2 d\theta_3$).
\end{example}

In the case of a space with an adapted complex structure, the vector field $J(\partial_x)$ restricted to the link $Z$ generates a group action on $Z$, for any fixed $x>0$. If we define 
\begin{equation}
\label{equation_transversal_Dolbeault}
    P_1=\frac{1}{2 \sqrt{2}} cl(dx-\sqrt{-1} J (dx)) \nabla^F_{\partial_x+\sqrt{-1}J(\partial_x)}, \quad P_2:=P-P_1
\end{equation}
where $P=\overline{\partial}_E$ and $P_2$ is a \textbf{transverse Dolbeault operator}. In the setting where the quotient of $Z$ by the foliation generated by the vector field $J(\partial_x)$ is a smooth manifold, it inherits a complex structure given by the restriction of the adapted complex structure. The transverse Dolbeault operator is then a Dolbeault operator with twisted coefficients on the quotient. Such structures have been studied on spaces with conic metrics even in the case where the links are stratified pseudomanifolds in \cite{jayasinghe2023l2,jayasinghe2024holomorphic}.

\begin{remark}
\label{remark_d_vs_adjoint_rescaling_Dolbeault}
Similar to the observations in Remark \ref{remark_d_vs_adjoint_rescaling_de_Rham} we observe that given a complex structure, the formal operator $\bar{\partial}_E$ is independent of the metric on $g$ while the adjoint is not, since $\bar{\partial}=\pi \circ d$ where $\pi$ is the projection onto the holomorphic part of the form bundle. Thus, the transverse Dolbeault operators are also independent of the metric.
\end{remark}

\subsection{Witten deformed operators}
\label{Subsection_Witten_deformed_ops}

We briefly expand on the Witten deformed operators defined in the introduction, referring the reader to \cite{witten1982supersymmetry,Zhanglectures,jayasinghe2023l2} for more details in the de Rham setting, and \cite{witten1984holomorphic,mathai1997equivariant,wu1998equivariant,jayasinghe2024holomorphic} for details in the Dolbeault setting.

Given a radial Morse (K\"ahler Hamiltonian Morse) function $h$ on $X$, the Witten deformed operators, their adjoints and the deformed Dirac-type operators for the de Rham (Dolbeault) complexes are
\begin{equation}
    P_\varepsilon =e^{-\varepsilon h} P e^{+\varepsilon h}, \quad P^*_\varepsilon=e^{+\varepsilon h} P^* e^{-\varepsilon h}, \quad D_{\varepsilon}=P_\varepsilon+P^*_{\varepsilon}
\end{equation}
and setting $\varepsilon=0$ we recover the undeformed complexes.
In the case of the de Rham complex we have that
\begin{equation}
    D_\varepsilon=D+ \varepsilon \widehat{cl}(dh)
\end{equation}
where $\widehat{cl}$ is the (right) Clifford operator on the exterior algebra defined by $\widehat{cl}(\theta)= \theta \wedge + \iota_{\theta^\#}$. 
In the case of the Dolbeault complex, we have 
\begin{equation}
    D_\varepsilon=D+ \sqrt{-1} \varepsilon {cl}(V^{\flat})
\end{equation}
where $V^{\flat}$ is the dual form (with respect to the metric on $Z$), for the K\"ahler Hamiltonian vector field $V$.

\section{Hilbert complexes and SUSY QM}
\label{Section_Hilbert_complexes}

In this section we first discuss Hilbert complexes that we study in this article following \cite{bru1992hilbert}, and describe how they correspond to SUSY QM systems following \cite{witten1982supersymmetry} (c.f. \cite{berwickevans2024ellipticcohomologyquantumfield}). We then introduce the domains for the operators that we study on $X$.

\subsection{Hilbert complexes}

We review the definition of Hilbert complexes in \cite{bru1992hilbert}. 
\begin{definition}
A \textbf{\textit{Hilbert complex}} is a complex, $\mathcal{P}=(H_*,\mathcal{D}(P_*),P_*)$, of the form:
\begin{equation}
    0 \rightarrow \mathcal{D}(P_0) \xrightarrow{P_0} \mathcal{D}(P_1) \xrightarrow{P_1} \mathcal{D}(P_2) \xrightarrow{P_2} ... \xrightarrow{P_{n-1}} \mathcal{D}(P_n) \rightarrow 0.
\end{equation}
where each map $P_k$ is a closed 
operator which is called the differential, such that:
\begin{itemize}
    \item the domain of $P_k$, $\mathcal{D}(P_k)$, is dense in $H_k$ which is a separable Hilbert space,
    \item the range of $P_k$ satisfies $ran(P_k) \subset \mathcal{D}(P_{k+1})$,
    \item $P_{k+1} \circ P_k = 0$ for all $k$.
\end{itemize}
\end{definition}

We will often denote such complexes by $\mathcal{P}=(H,\mathcal{D}(P),P)$ without explicitly denoting the grading. 
If we forget the information of the domain, we refer to $\mathcal{P}=(H,P)$ as a \textbf{pre-Hilbert complex} and we will study how to pick suitable domains to promote pre-Hilbert complexes to Hilbert complexes.

In this article we will study Hilbert complexes where $D=P+P^*$ is a Dirac operator as introduced in the previous subsection.
Given such a Hilbert complex, we have an \textbf{associated Dirac-type operator} 
\begin{equation}
\label{Domain_Dirac_first}
   D=P+P^*, \quad \mathcal{D}(D)=\mathcal{D}(P) \cap \mathcal{D}(P^*).
\end{equation}
acting on the direct sum of Hilbert spaces $H_k$, and an \textbf{associated Laplace-type operator} 
\begin{equation}
     \Delta:=D^2=PP^*+P^*P, \quad \mathcal{D}(\Delta) = \{ v \in \mathcal{D}(D) : D v \in \mathcal{D}(D) \},
\end{equation}
where we use the fact that $P^2=0$ (from which it follows that $(P^*)^2=0$) to see that
\begin{equation}
\label{Laplacian_P_type}
\mathcal{D}(\Delta_k) = \{ v \in \mathcal{D}(P_k) \cap \mathcal{D}(P_{k-1}^*) : P_k v \in \mathcal{D}(P_k^*), P^*_{k-1} v \in \mathcal{D}(P_{k-1}) \}.
\end{equation}

For every Hilbert complex $\mathcal{P}$ there is an \textit{\textbf{adjoint Hilbert complex}} $\mathcal{P}^*$, given by
\begin{equation}
\label{adjoint_complex}
    0 \rightarrow \mathcal{D}((P_{n-1})^*) \xrightarrow{(P_{n-1})^*} \mathcal{D}((P_{n-2})^*) \xrightarrow{(P_{n-2})^*} \mathcal{D}((P_{n-3})^*) \xrightarrow{(P_{n-3})^*} ... \xrightarrow{(P_{1})^*} H_0 \rightarrow 0
\end{equation}
where the differentials are $P_k^*: Dom(P^*_k) \subset H_{k+1} \rightarrow H_k$, the Hilbert space adjoints of the differentials of $\mathcal{P}$. The Hilbert space in degree $k$ of the adjoint complex $\mathcal{P}^*$ is the Hilbert space in degree $n-k$ of the complex $\mathcal{P}$, and the operator in degree $k$ of $\mathcal{P}^*$ is the adjoint of the operator in degree $(n-1-k)$ of $\mathcal{P}$. 
We observe that the associated Laplace-type and Dirac type operators for a complex are equal to those of the adjoint complex (up to changes in degree).

In this article we work with Hilbert complexes for which the associated Laplace-type operators have discrete spectrum, and the cohomology groups, the \textit{\textbf{cohomology groups}} of a Hilbert complex are defined to be $\mathcal{H}^k(\mathcal{P}):= ker(P_k)/ran(P_{k-1})$. The corresponding cohomology groups of $\mathcal{P}^*$ are $\mathcal{H}^k(\mathcal{P}^*) := ker(P^*_{n-k-1})/ran(P^*_{n-k})$.

We can form a two step complex where the Hilbert spaces are $H^+ =\bigoplus_{q=even} H_q$, and $H^- = \bigoplus_{q=odd} H_q$, which leads to a \textit{\textbf{formal Dirac complex}} 
\begin{equation}
\label{two_term_Dirac_complex}
    0 \rightarrow \mathcal{D}(D^{+}) \xrightarrow{D^+} \mathcal{D}(D^{-}) \rightarrow 0
\end{equation}
where $D^{\pm}$ is simply the restriction of $D$ to $H^{\pm}$.

\textbf{In this article we will restrict our attention to complexes where the Hilbert spaces $H_k$ (and the domains $\mathcal{D}(\Delta)$) have orthonormal bases of eigensections of the associated Laplace-type operators}, in which setting, the cohomology of the complex is isomorphic to the null space of the Laplace type operator, and is isomorphic to the null space of the Dirac type operator (on their respective domains), and we will denote those vector spaces by $\mathcal{H}^k(\mathcal{P})$.
Having discrete spectrum implies there is a Hodge-Kodaira decomposition for the complex
\begin{center}
    $H_k=\mathcal{H}^k(\mathcal{P}) \bigoplus \overline{ran(P_{k-1})} \bigoplus \overline{ran(P_k^*)}$
\end{center}
and we refer to the sections in the second and the third summands as exact and co-exact sections.
The following is a version of Lemma 3.13 of \cite{jayasinghe2023l2} which elucidates this further. We refer to \textit{loc. cit.} for a proof, noting that it is a consequence of the nilpotency of $P, P^*$ and the fact that they commute with the Laplace-type operator.

\begin{lemma}
\label{Lemma_super_symmetry}
For a Hilbert complex $\mathcal{P}=(H,\mathcal{D}(P),P)$ where $P$ is nilpotent, the Laplace-type operators have discrete spectrum, given $\{v_{\lambda_i}\}$, an orthonormal basis of eigensections (with eigenvalues $\lambda^2_i$) for the co-exact elements of degree $k$, the set $\{\frac{1}{\lambda_i}P_k v_{\lambda_i}\}$ is an orthonormal basis of eigenvectors (of the operator $\Delta_{k+1}$) for the exact elements of degree $k+1$.
\end{lemma}

This exact/co-exact decomposition refines the decomposition given by the degree, in particular the odd and even decomposition when $P$ is a nilpotent, and such decompositions are used widely in physics (see \cite[\S 16.4]{peskin2018introduction} for a discussion of a similar decomposition with BRST operators).
This decomposition is crucial in formulating general Morse inequalities.

If instead we only have a two term Dirac complex (\ref{two_term_Dirac_complex}), there the exact and co-exact sections with respect to $D^+$ are the odd (in the image of $D^+$) and even (in the image of $D^-$) sections. This corresponds to a $\mathcal{N}=1$ SUSY QM theory in the sense we will discuss in the next subsection.

\subsubsection{Witten deformed complexes}
\label{subsubsection_Witten_Deformed_domains}

Moreover given a Morse function/ K\"ahler Hamiltonian Morse function $h$ on a space $X$ with a twisted de Rham/ Dolbeault complex $\mathcal{P}=(H, \mathcal{D}(P), P)$, we define the corresponding Witten deformed complex to be $\mathcal{P}_{\varepsilon}=(H, \mathcal{D}(P_\varepsilon), P_\varepsilon)$ where 
\begin{equation}
    \mathcal{D}(P_\varepsilon):= \{ u \in H| e^{-\varepsilon h}u \in \mathcal{D}(P) \}
\end{equation}
and it is clear that the undeformed complex is given by $\varepsilon=0$. 

\begin{remark}
\label{Remark_introductory_Morse}
The Hilbert complexes for any two values of $\varepsilon \geq 0$ are canonically isomorphic, in particular giving an isomorphism of the cohomology (and harmonic representives). In the case of the deformed Dolbeault complex, one can organize the basis of infinite dimensional harmonic sections into a (countable) direct sum of finite dimensional eigenspaces of the operator $\sqrt{-1}L_V$. The operator $\Delta_{\varepsilon}$ in \ref{equation_deformed_Laplace_operator_intro_1} is reminiscent of a Schr\"odinger operator for a potential well, where the eigenvalues (energy levels) of the non-zero energy states grow to $\infty$ in the semi-classical limit, $\varepsilon$ goes to $\infty$.

In general one should consider functions for which the gradient flow is both attracting and expanding (in different directions), but it suffices to study the case that we do here and use dualities of local complexes when proving local to global results as in \cite{jayasinghe2024holomorphic} (see also Remark \ref{Remark_signs_Morse_functions_dualities}) to study a large class of functions, including Morse functions on smooth manifolds.
\end{remark}

\subsection{SUSY QM}

In this section we give a brief introduction to supersymmetric quantum mechanics relating it to the complexes we study, mostly following the exposition in \cite{witten1982constraints} (c.f., \cite{berwickevans2024ellipticcohomologyquantumfield,deligne1999quantum}). We also describe Morse polynomials and the instanton complexes that categorify them, relating them to a semi-classical limit of a SUSY QM system following \cite{witten1982supersymmetry}.

\subsubsection{Classical and Quantum mechanical systems}

We begin by introducing several basic notions following a lecture of Witten (see Lecture 1 of part 4 of \cite{deligne1999quantum}) and refer the reader to that text for more details.

A \textit{\textbf{classical theory}} is a pair $(X, \Ham)$ where $X$ is a symplectic space and $\Ham$ is a real valued function called the Hamiltonian (defined up to a locally constant function). If $X$ is a smooth symplectic manifold, it can be reconstructed as the spectrum of the Poisson algebra $A=C^{\infty}(X)$ in the smooth setting.
A \textit{\textbf{quantum theory}} is defined to be a pair $(\mathcal{A},\widehat{\Ham})$ where $\mathcal{A}$ is a $^*$-algebra (the algebra of observables) and $\widehat{\Ham}$ is a self-adjoint element (called the Hamiltonian) of $\mathcal{A}$ defined up to a real constant term. Such pairs depend on a real positive parameter $\hbar$ (Planck's constant).

A \textit{\textbf{realization}} (or solution) of a theory is defined to be an irreducible $^*$-representation of the algebra $\mathcal{A}$ in some Hilbert space $H$ such that the spectrum of the operator $\widehat{\Ham}$ is bounded from below (representations are considered up to isomorphisms which preserve $\widehat{\Ham}$). For relativistically invariant theories i.e. theories with an action of the Poincar\'e group on $A$, the subgroup of time translations acts on elements $a \in \mathcal{A}$ by $a \mapsto e^{-it\widehat{\Ham}/\hbar}a e^{it\widehat{\Ham}/\hbar}$.

A \textit{\textbf{(quantum) ground state}} of a quantum theory $(\mathcal{A},\widehat{\Ham})$ with a realization $H$ is a vector in $H$ with the smallest possible eigenvalue.

A \textit{\textbf{quantization map}} is a linear mapping that sends elements $a\in A$ to (their quantizations) $\widehat{a} \in \mathcal{A}$ for each $\hbar$ and $\Ham$ to $\widehat{\Ham}$, with compatibility conditions for the Poisson bracket on $A$ and the Lie bracket on $\mathcal{A}$.
A quantum state $v \in H$ of norm $1$ is said to be localized near a classical state $x \in X=spec(A)$ if for any $a \in A$, its \textit{quantization} $\widehat{a}$ satisfies $\langle v, \widehat{a} v \rangle \rightarrow a$ as $\hbar \rightarrow 0$. In particular, quantum ground states \textbf{localize} near classical ground states (stationary points of $\Ham$). 

\begin{remark}[Morse inequalities and the quantum-classical correspondence]
In \cite{witten1982supersymmetry,witten1984holomorphic} Witten studied a quantization of the classical mechanical system corresponding to the gradient flow of Morse functions, where the associated Hamiltonian operator is of the form in \eqref{equation_deformed_Laplace_operator_intro_1}.
He explained that the localization of quantum ground states near the classical ground states in the semi-classical limit could be used to prove the Morse inequalities. Roughly, the deformed complex has a sub-complex of ``small eigenvalue eigensections", also called the Witten instanton complex, and is quasi-isomorphic to the undeformed complex. But the elements of this complex localize near the critical points of the Morse function (the classical ground states) in the semi-classical limit.

Many of the analytic details were filled in by Helffer and Sj\"ostrand in \cite{helffer1987wells} for the de Rham complex. In the Dolbeault case, the result was proven in \cite{mathai1997equivariant} using the analytic results and techniques developed in \cite{bismut1991complex}, and extensions were used to study various phenomena such as quantization commuting with reduction \cite{TianZhangQUANTIZATION1998,tian1998holomorphic}.
\end{remark}

\subsubsection{SUSY QM systems}

We refer the reader to \cite[\S 1]{witten1982supersymmetry} and \cite[\S 3]{berwickevans2024ellipticcohomologyquantumfield} for more details on the content in this subsection.

Given a realization of a QM theory $(H, \widehat{\Ham})$ as defined above, suppose that there is a splitting $H=H^+ \oplus H^-$ equipped with a $\mathbb{Z}_2$ grading and that $\widehat{\Ham}$ respects the grading (i.e., is an even map). Equivalently $H$ is equipped with an involution $\gamma$ (called the \textbf{grading/chirality/supersymmetry} operator, sometimes denoted by $(-1)^F$) which satisfies
\begin{equation}
\label{equation_simple_supersymmetry_Laplace}
    \gamma= \pm 1 \text{   on  } H^{\pm} ,\quad \text{and   } \widehat{\Ham} 
 \, \gamma=\gamma \, \widehat{\Ham}.
\end{equation}

Supersymmetric quantum mechanics enhances this by odd operators $Q_i \in \mathcal{A}$ for $i=1,..., \mathcal{N}$ which satisfy 
\begin{equation}
\label{equation_SUSY_basic}
    Q_i^2=\widehat{\Ham}, \quad [Q_i, Q_j]=Q_iQ_j+Q_jQ_i =0 \quad i \neq j 
\end{equation}
where the odd condition is equivalent to $\{Q_i, \gamma\} =0$ where $\{ \cdot,\cdot \}$ is the anti-commutator. The operators $Q_i$ are called the supersymmetry operators and it is said the theory has $\mathcal{N}$ supersymmetry. 
In Section 3 of \cite{witten1982constraints}, it is shown that given $\mathcal{N}=2$ SUSY,
the nilpotent operators
\begin{equation}
    Q_{\pm}=\sqrt{\frac{1}{2}} (Q_1 \pm \sqrt{-1} Q_2)
\end{equation}
are introduced, and it is shown that in the zero momentum sector (see Remark \ref{Remark_SUSY_relativistic_momentum}), which is the setting we study in this article, the SUSY algebra takes the simple form 
\begin{equation}
    Q_{\pm}^2=0, \quad Q_+Q_- + Q_- Q_+ =0.
\end{equation}

\begin{example}[de Rham complex]
In \cite[\S 2]{witten1982supersymmetry}, Witten describes the SUSY algebra on $H=L^2\Omega(X;\mathbb{R})$, generated by 
\begin{equation}
    Q_1=d+d^*, \quad Q_2= \sqrt{-1}(d-d^*), \quad H=dd^*+d^*d
\end{equation}
so that the Hamiltonian is the Hodge Laplacian acting on forms, and it is easy to verify the supersymmetry relations.

In addition to the chirality operator given by $(-1)^F$ where $F$ is the degree of the form acted on (called the \textbf{F}ermion number operator), the Hodge star operator $\star$ can be used to produce a chirality operator $(-1)^{q(q-1)/2}\star$ acting on the form bundle on even dimensional spaces $X$ to obtain a different supersymmetric theory (see page 687 \cite{witten1982supersymmetry}), the index of which corresponds to the Signature invariant.
\end{example}

\begin{example}[Witten deformed de Rham complex]
For the chirality given by the number operator, there is a \textbf{family} of realizations of SUSY QM theories determined by 
\begin{equation}
    Q_{1,\varepsilon}=d_{\varepsilon}+d^*_{\varepsilon}, \quad Q_{2,\varepsilon}= \sqrt{-1} (d_{\varepsilon}-d^*_{\varepsilon})
\end{equation}
where $\varepsilon=1/\hbar$. The Hamiltonian is the Schr\"odinger operator $\Delta_{\varepsilon}$ give in \eqref{equation_deformed_Laplace_operator_intro_1}. It is with this system (rather with the $\mathcal{N}=1$ theory obtained by forgeting $Q_2$) that Witten studied classical Morse theory. Witten introduces a different deformation for the Signature operator in \cite{witten1982supersymmetry}, in the presence of a Killing vector field.
\end{example}

\begin{example}[Witten deformed Dolbeault complex]
\label{example_Witten_deformed_QM_theory_Dolbeault}
In \cite{witten1983fermion,witten1984holomorphic} Witten studied the SUSY algebra on $H=L^2\Omega(X;E)$ where $E$ is a holomorphic vector bundle, equipped with a K\"ahler Hamiltonian function $h$ whose K\"ahler action lifts to a linear action on the bundle $E$.
This determines a family of $\mathcal{N}=2$ SUSY QM theories generated by
\begin{equation}
    Q_{1,\varepsilon}=P_{\varepsilon}+P^*_{\varepsilon}, \quad Q_{2,\varepsilon}= \sqrt{-1} (P_{\varepsilon}-P^*_{\varepsilon})
\end{equation}
where $P=\overline{\partial}_E$. 
The Hamiltonian is the Schr\"odinger type operator $\Box_{\varepsilon}$ given 
in \eqref{equation_deformed_Laplace_operator_intro_1}, which was used to formulate holomorphic Morse inequalities in \cite{witten1984holomorphic} 
(c.f. \cite{jayasinghe2024holomorphic}). 
The undeformed complex can be defined even without a K\"ahler Hamiltonian Morse function.
\end{example}

In the case of both the de Rham and Dolbeault complexes (with or without Witten deformation) with nilpotent operator $P$,
since the $\mathcal{N}=2$ SUSY algebra corresponds to $Q_1=P+P^*$ and $Q_2=\sqrt{-1}(P-P^*)$, we can check that
\begin{equation}
    Q_{+}=\sqrt{2} P^*, \quad Q_{-}=\sqrt{2} P,
\end{equation}
and although the SUSY algebra does not guarantee that the $\mathbb{Z}_2$ grading can be refined to a $\mathbb{Z}$ grading, it exists in the case of these complexes. Witten explains that the deformation given by conjugating the operators $Q_{\pm}$ is in general inequivalent to the undeformed one, although the cohomology of the complexes are isomorphic (see the discussion following equation(12) of \cite{witten1982constraints}).

Witten introduced the complex in Example \ref{example_Witten_deformed_QM_theory_Dolbeault} in \cite{witten1983fermion} while studying Fermion quantum numbers on Kaluza Klein space where the Rarita-Schwinger operators are studied on Kaluza-Klein spaces with a Riemannian metric, modded out by ghost fields (see the discussion below \textit{loc. cit.} of the article). There the BRST operator corresponding to the ghost fields is also nilpotent on the field configurations (see also \cite[\S 16.4]{peskin2018introduction}), and is actually isomorphic to the Dolbeault operator twisted by a \textit{square root} of the canonical bundle, which is why the (equivariant) indices of the Spin Dirac operator appear in the (equivariant) index formula for the Rarita-Schwinger operator. We refer to \cite[\S 7.4]{jayasinghe2024holomorphic} for a discussion on extending this to the singular setting.

\begin{remark}
The above example together with the remarks in Subsection \ref{subsection_polarizations_nilpotent} shows that a two term Dirac complex yields a $\mathcal{N}=2$ SUSY theory only if there is a compatible complex structure, although there are studies of Witten deformation on symplectic spaces with just almost complex structures (see \cite{tian1998holomorphic,TianZhangQUANTIZATION1998}).
\end{remark}

\begin{remark}
\label{Remark_SUSY_relativistic_momentum}
The supersymmetric algebra needs to be generalized in relativistic quantum field theory to include momentum operators, and we refer to \cite{witten1982supersymmetry} for more details. We note that while the momentum operators that arise in the systems described in \textit{loc. cit.} are of the form $\sqrt{-1}\varepsilon L_V$, similar to that appearing in \eqref{equation_deformed_Laplace_operator_intro_1}, the SUSY theory in \eqref{example_Witten_deformed_QM_theory_Dolbeault} is non-relativistic.
\end{remark}

A direct consequence of the SUSY algebra is that the states annihilated by the operators $Q_i$ are in the null space of $\widehat{\Ham}$, which are the zero energy states and are thus ground states (since we assume that $\widehat{\Ham}$ is a non-negative operator. If the ground state of the system has positive energy, we say that the supersymmetry is \textbf{\textit{spontaneously broken}}, and we refer to \cite{witten1982supersymmetry} for more details (c.f. \cite[\S 6]{witten1981dynamical} for models with dynamical SUSY breaking).

In general, an irreducible realization of a quantum theory can have many ground states. For example, for the theory of the Dirac operator on a compact manifold, ground states are harmonic spinors; the space of harmonic spinors can be more than one dimensional. Supersymmetry can only be present in the universe if it is \textbf{spontaneously broken}, that is if the vacuum states (lowest energy states) have positive energy. A prototypical example is the case of a spin Dirac operator on a space with positive scalar curvature, whence the positivity of the energy of vacuum states is guaranteed by Lichnerowicz's theorem. In the case where there is a K\"ahler Hamiltonian action, this is also guaranteed by the holomorphic Morse inequalities (see \cite[\S 7.3]{jayasinghe2024holomorphic}).

\begin{remark}
    There are several slightly different variations in how supersymmetric quantum mechanics and SUSY quantum mechanical systems (QM systems) are formalized in the literature, depending on various spectral properties on the spaces being studied. For instance in Definition 3.3 of \cite{berwickevans2024ellipticcohomologyquantumfield}, it is assumed that the Hamiltonian has trace class resolvent, whereas we work with systems which have compact resolvent only for operators $\Delta+\Delta_Z$ or after restricting to eigenspaces of $\sqrt{-1}L_V$ (see Theorem \ref{Theorem_main_spectral}), and hence have to renormalize various traces that we compute. 
\end{remark}

\subsection{Domains for operators on singular geometries.}
\label{subsection_domains_on_X}

Many formally self-adjoint operators on singular spaces including the ones we study in this article are not necessarily essentially self adjoint on singular spaces and spaces with boundaries.
There are two canonical domains which are the minimal domain,
\begin{equation}
    \mathcal{D}_{min}(P_X)= \{ u \in L^2(X;F) : \exists (u_n) \subseteq {C}^{\infty}_c(\mathring{X};F) \text{ s.t. }
    u_n \rightarrow u \text{ and } (P_Xu_n) \text{ is } L^2-\text{Cauchy} \},
\end{equation}
for any operator $P_X$ and
\begin{equation*}
    \mathcal{D}_{max}(P_X)= \{ u \in L^2(X;F) : (P_Xu) \in L^2(X;F) \},
\end{equation*}
where $P_Xu$ is computed distributionally.  Given any choice of self-adjoint extension $\mathcal{D}_I(D)$ of a Dirac-type operator that we study in this article we can study the spectrum of the operators $D_{Z_j}$ on the sections of a link (say at $x=1/2$) defined in equation \eqref{equation_Dirac_intermediary_1} and we can define the following conditions.

\begin{definition}[adapted Witt condition]
\label{adapted_Witt_assumption}
Consider $X$ with a swp metric and an adapted complex $\mathcal{P}_{W,B}$ of the types we consider in this article. Consider all the $\Delta_Z$ harmonic sections $\phi$ on $Z$ which determine $\Delta_Z$ harmonic sections on $X$ that are $L^2$ bounded and are contained in the domain of the Laplace-type operator $\Delta_X$ of the complex. If the adjoint rescaling function 
\begin{equation}
    F(\phi)=R^2(dvol_Z)/R(\phi^2) \neq 1
\end{equation}
for all such sections, we say that we say the complex satisfies the \textbf{adapted Witt condition}.
\end{definition}

\begin{remark}
We observe that in the case of conic metrics, $F(\phi)=1$ for forms in middle degree of a link $Z$ of even degree, and no other forms, and thus the above definition generalizes the Witt condition that is well known in the conic setting (see, e.g. \cite{Albin_signature,Albin_2017_index}).
\end{remark}

It is known that if the metric is conic (wedge) on a compact stratified pseuodomanifold, then the Witt condition guarantees that the Hodge Dirac operator is essentially self adjoint (see, e.g., \cite{Albin_signature}).
The simplest way to understand these conditions is via the Green-Stokes formula, or integration by parts near the singularity (resolved boundary at $x=0$).

Recall that for an operator $P \in \text{Diff}^1(U;E,F)$ on a manifold with boundary $(U, \partial U)$ we have the Green-Stokes formula (see, e.g., Proposition 9.1 of \cite{taylor1996partial})
\begin{equation}
\label{equation_Greens_identity}
    \langle Ps, \tau \rangle_F - \langle s, P^* \tau \rangle_E = \int_{\partial U} g_F( i\sigma_1(P)(dx) s, \tau) \operatorname{dVol}_{\partial U}
\end{equation}
where $\sigma_1(P)(dx)$ is the principal symbol of the operator $P$ evaluated at the differential of a boundary defining function $x$, where we have chosen $x$ such that the outward pointing unit normal vector to the boundary is given by $\partial_x$, and the corresponding co-vector is $dx$.

The following example shows that there are choices to be made when the Witt condition is not satisfied, illuminating how one has to be careful with integration by parts near the singularity and when formulating ($L^2$) Stokes theorems for sections in different domains.

\begin{example}
Consider the case of a cone over a two torus $X=C_x(\mathbb{T}^2)$ with wedge metric $dx^2+x^2(d\theta_1^2+d\theta_2^2)$. This is the truncated tangent cone of the suspension over the two torus $\Sigma_\phi(\mathbb{T}^2_{\theta_1,\theta_2})$ with the metric $d\phi^2+\sin^2(\phi)(d\theta_1^2+d\theta_2^2)$ (motivated by the round metric on the sphere).

Consider the forms $s=dx \wedge d\theta_1$ and $\tau=d\theta_1$.
It is easy to verify that these sections are in the null space of both $d$ and $d^*$, and are both $L^2$ bounded with respect to the measure correponding to the volume form $dx \wedge xd\theta_1 \wedge xd\theta_2$.
Then the Green-Stokes formula for $P=d^*$ in equation \eqref{equation_Greens_identity} instantiates to
\begin{equation}
    \langle d^*s, \tau \rangle_E - \langle s, d \tau \rangle_E = \int_{\mathbb{T}^2} g_E( i\sigma_1(d^*)(dx) s, \tau) dVol_{\partial X}
\end{equation}
where $E=\Lambda \prescript{a}{}{T^*X}$ is the adapted form bundle. The last integral is simply 
\begin{equation}
    \int_{\mathbb{T}^2} (\iota_{\partial_x} dx \wedge d\theta_1) \wedge (\star_{\mathbb{T}^2} d\theta_1)=\int_{\mathbb{T}^2} (d\theta_1) \wedge d\theta_2= Vol(\mathbb{T}^2) \neq 0
\end{equation}
showing that both sections cannot simultaneously be in a self-adjoint extension of $d+\delta$. An ideal boundary condition at $x=0$ will exclude one of these two sections from the domain of $d+\delta$.   
\end{example}

Thus the operator $d$ with an extension that includes both sections is not nilpotent. We refer to \cite{bandara2024geometric} for an investigation of such phenomena for smooth manifolds with non-smooth geometric data.

The following space arises naturally as the tangent cone at the singular point of the cusp curve $y^2-x^3=0$ in $\mathbb{C}^2$ studied in \cite{jayasinghe2024holomorphic,jayasinghe2023l2}. There the Witt condition is satisfied, but the Dirac type operator is not essentially self-adjoint.

\begin{example}
\label{Example_Complex_cone_4pi}
Consider the case of a cone over a circle of length $4\pi$, $X=C_x(S^1_{4\pi})$ with conic metric $dx^2+x^2(d\theta^2)$. This is K\"ahler with the complex structure that sends $dx$ to $xd\theta$, and $xd\theta$ to $-dx$. We define $z:=xe^{i\theta}$, where we emphasize that $\theta \in S^1_{4\pi}$.

Consider the sections of the Dolbeault complex on this space.
We observe that $\tau=z^{-1/2}$ is in the null space of the $\overline{\partial}$ operator, and is $L^2$ bounded with respect to the measure corresponding to the volume form $dx \wedge xd\theta$.
We also note that $s=\overline{z}^{-1/2}\overline{dz}=\overline{z}^{-1/2}e^{-i\theta}(dx-ixd\theta)$ is in the null space of the $\overline{\partial}^*$ operator and is $L^2$ bounded.
Then the Green-Stokes formula for $P=\overline{\partial}^*$ 
is
\begin{equation}
    \langle \overline{\partial}^*s, \tau \rangle_E - \langle s, \overline{\partial} \tau \rangle_E = \int_{0}^{4\pi} g_E( i\sigma_1(\overline{\partial}^*)(dx) s, \tau) d\theta
\end{equation}
where $E=\Lambda \prescript{a}{}{T^*X^{0,1}}$ is the wedge anti-holomorphic form bundle. 
The last integral is simply 
\begin{equation}
\label{equation_inner_product_0}
     i\int_{0}^{4\pi} g_E(\overline{z}^{-1/2}e^{-i\theta},\overline{z^{-1/2}}) d\theta
     =i\int_{0}^{4\pi} 1 d\theta=4\pi i \neq 0,
\end{equation}
where we used the fact that 
$i\sigma_1(\overline{\partial}^*)(dx) \overline{dz}=ie^{-i\theta}$
and that
\begin{equation}
    A=g_E(\overline{z}^{-1/2} e^{-i\theta} , \overline{z^{-1/2}})=x^{-1/2} e^{-i\theta/2} \wedge \star (x^{-1/2} e^{+i\theta/2})
\end{equation}
using the sesquilinearity of the inner product, and that $A=x^{-1} \star 1 =x^{-1} dx \wedge xd\theta=dx \wedge d\theta$, which induces the surface measure $d\theta$ on the link at $x=0$.
This shows that both sections $s,\tau$ cannot simultaneously be in a self-adjoint extension of $\overline{\partial}+\overline{\partial}^*$.
\end{example}

It is well known that the eigenvalues of a Laplace-type operator on a smooth manifold grows smaller as the volume of $Z$ grows larger. The simplest example is the circle with the flat metric $d\theta^2$ where $\theta \in S^1_L$, where the Laplacian has eigenvalues that are proportional to the square of the length $L$ of the circle, and where the eigenvalues of the corresponding Dirac operator $-i\partial_{\theta}$ are proportional to the length of the circle. The small eigenvalues of the Dirac operators on the links correspond to the choices of self-adjoint extensions of the operator on $X$ and we refer to \cite{jayasinghe2024holomorphic} for more details on the domains of Dolbeault complexes in the conic (wedge) setting.

We study complexes $\mathcal{P}_{I}=(H, \mathcal{D}_{I}(P),P)$ where $H=L^2(X;E)$ is the Hilbert space of sections on a vector bundle $E$ on $X$, and $I$ indicates one choice of domain out of several which we shall introduce now.
We denote the maximal domain for $P$ by $I=(\max,N)$ and the minimal domain for $P$ by $I=(\min,D)$. We also define the domain
\begin{multline}
    \mathcal{D}_{\min, N}(P_X)= \{ u \in \mathcal{D}_{\max,N}(P) : \exists (u_n) \subseteq \mathcal{D}_{\max,N}(P) \text{ s.t. } (u_n)|_{X_2} \subseteq {C}^{\infty}_c(\mathring{X_2};E) \\
    \text{ and }   u_n \rightarrow u \text{ and } (P_{X_2} u_n) \text{ is } L^2(X_2;E) \text{    Cauchy} \},    
\end{multline}
where $X_2$ is the space $X_2=[0,1/2]\times Z$ equipped with the restriction of the metric on $X$ to the truncation $X_2$. Roughly, this is picking the domain to be minimal near the singularity while allowing sections in the maximal domain to be present near the boundary.
We also define 
\begin{equation}
    \mathcal{D}_{\max,D}(P_X):= (\mathcal{D}_{\min,D}(P^*_X))^*
\end{equation}
where $P_X^*$ is the formal adjoint of $P_X$.
The notation of $N/D$ here corresponds to the generalized Neumann/Dirichlet boundary conditions for the associated Dirac operators of these complexes introduced in \eqref{boundary_conditions_for_Dirac}, and this can be understood using the Green-Stokes formula in \eqref{equation_Greens_identity}, near the boundary at $x=1$.

If we consider the domain for the Dirac operator of the complex $\mathcal{P}_{\max,N}$ given by \eqref{Domain_Dirac_first}, we take the intersection of $\mathcal{D}_{\max,N}(P)$ and $\mathcal{D}_{\min,D}(P^*)$.
Inclusion of a section in the maximal domain imposes regularity conditions near $x=1$, while the minimal domain is more restrictive.
The Green-Stokes condition shows that given a section $s \in \mathcal{D}_{\max,N}(P)$ and a section $\tau \in \mathcal{D}_{\min,D}(P^*)$, the boundary integral in \eqref{equation_Greens_identity} must vanish since the domains are adjoints. The restriction here comes from the condition on $\tau$ that $\sigma_1(P^*)(dx)\tau=0$ at the boundary, which can be deduced from the boundary integral in \eqref{equation_Greens_identity} and the fact that the adjoint of $\sigma(P)(dx)$ is $\sigma(P^*)(dx)$, and this is precisely the generalized Neumann condition for the operator $P$.

Similarly we see that the sections in the domains for the Dirac operator of complexes labeled with $B=D$ satisfy the generalized Dirichlet conditions.

\section{Laplace-type operators}
\label{subsection_Laplacians}

For singular warped product metrics of the form in \eqref{warped_product_metric}
on $X$, given an orthonormal frame $\{e_j^1,...,e_j^n\}$ of the cotangent bundle on a chart on a factor $Z_j$ of the link $Z$, we saw in Subsection \ref{Subsection_Dirac_operators_detail} that we can construct operators
\begin{equation}
\label{equation_Dirac_intermediary_1_second}
    {D}_{Z_j}=\sum_i cl(e_j^{i}) \nabla^E_{e_j^{i \#}}, \quad \widetilde{D}_{Z_j}=\sum_i cl(f_j(x)  e_j^i) \nabla^E_{e_j^{i \#}}
\end{equation}
and the Dirac-type operators
\begin{equation}
\label{equation_Dirac_intermediary_2_second}
    D_{X}=D_1+D_2, \quad \text{ where   }  D_1=cl(dx) \nabla^E_{\partial_x}, \quad D_2=\sum_{j=1}^{m} \frac{1}{f_j}\widetilde{D}_{Z_j}.
\end{equation}

\begin{remark}
\label{Remark_Laplacian_craziness}
The associated Laplace-type operator is 
\begin{equation}
    \Delta_X:=D^2_X=D_1^2+D_2^2+\{D_1,D_2\}
\end{equation}
and \textbf{one of the main outcomes of the work in this subsection will be to construct an orthonormal basis of eigensections for $\Delta_X$ (and for the spectrum even without boundary conditions) on which it suffices to study what are \textit{essentially} eigensections of $D_1^2+D_2^2$. We will clarify this now.
}
\end{remark}

Given adapted co-vector fields $u, v \in \Omega^{0}(X;\mathbb{C})$, the Clifford relation 
\begin{equation}
    \{cl(u), cl(v)\}=cl(u)cl(v)+cl(v)cl(u)=-2g(u, v)
\end{equation}
can be used to simplify the Laplace-type operator
\begin{equation}
    D_X^2=D_1^2+ \sum_j \frac{1}{f_j^2(x)} {D}^2_{Z_j}+ \sum_j (cl(dx)\nabla^E_{\partial_x} [\frac{1}{f_j(x)} \widetilde{D}_{Z_j}])+\sum_j (\frac{1}{f_j(x)}) [cl(dx)\widetilde{D}_{Z_j}+cl(dx)\widetilde{D}_{Z_j}] \nabla^E_{\partial_x}.
\end{equation}
Here we used the fact that $cl(dx)^2=-Id$ is a consequence of the Clifford relation which can also be used to show that $[cl(dx)\widetilde{D}_{Z_j}+cl(dx)\widetilde{D}_{Z_j}]=0$ and we obtain
\begin{equation}
\label{equation_to_say_something_52}
    \Delta_X=D_1^2 + \sum_j \frac{1}{f_j^2(x)} [cl(dx){D}_{Z_j}]^2+\sum_j cl(dx)\nabla^E_{\partial_x}(\frac{1}{f_j(x)} \widetilde{D}_{Z_j})
\end{equation}
where we also used the Clifford relation to see that $[cl(dx)\widetilde{D}_{Z_j}]^2=\widetilde{D}_{Z_j}^2$.
Given a form $\phi_{(\mu_1,...,\mu_m)}$ on $Z$ of multi-degree $\underline{k}=(k_1,...,k_m)$, with coefficients twisted by a flat bundle $E$ which satisfies 
\begin{equation}
\label{equation_ansatz_0}
    cl(dx)D_{Z_j}\phi_{(\mu_1,...,\mu_m)}=\mu_j \phi_{(\mu_1,...,\mu_m)}
\end{equation}
for each $j$, we can seek to solve the equation
\begin{equation}
\label{equation_ansatz_1}
    \Delta_X u(x)\phi_{(\mu_1,...,\mu_m)}= \lambda^2 u(x) \phi_{(\mu_1,...,\mu_m)}.
\end{equation}
Given eigensections $\{\phi_j\}_{j=1}^m$ for $\Delta_{Z_j}$ with eigenvalues $\mu_j$, we can construct $\phi_{(\mu_1,...,\mu_m)}=\wedge_j \phi_j$. In fact if all sections $\phi_j$ are exact (respectively co-exact) or harmonic, then the sections $\phi_{(\mu_1,...,\mu_m)}$ are exact (respectively co-exact), and $\phi_{(\mu_1,...,\mu_m)}$ is harmonic iff all factors $\phi_j$ are harmonic.
Given such a $d_Z$ exact form $u_1(x)\psi_1$ we can write
\begin{equation}
    \star_X u_1(x)\psi_1=F(\psi_1)u_1(x)dx \wedge (\star_Z \psi_1)=u_2(x) dx \wedge \psi_2, \quad u_2(x)=F(\psi_1)u_1(x),\quad \psi_2=\star_Z \psi_1
\end{equation}
where $\psi_2$ is a $d_Z$ co-exact form since $\star_Z$ intertwines $d_Z$ and $\delta_Z$ up to a sign (c.f. Remark \ref{Remark_SUSY_Hodge_star_rescalings}).

We showed how to express $D_1^2$ on forms in Subsection \ref{subsection_Hodge_Dirac_operator}, and this can be used to expand the operator $\Delta_X$. However there is a further simplification using the supersymmetry where the last term in \eqref{equation_to_say_something_52} avoided in the case of $\mathcal{N}=2$ SUSY, where the operator $Q_{-}=P$ 
is nilpotent (see Remark \ref{Remark_independence_properties}).
This was first shown by Cheeger in the conic setting for the Hodge Laplacian, and more generally for Dirac operators we study here for the conic setting in \cite{jayasinghe2023l2}.
As in those articles, it will be achieved by constructing an ansatz for the eigensections of the Laplace-type operators on forms (for de Rham and Dolbeault complexes). We interpret this generalization of the ansatz of Cheeger in the following manner.

\subsection{Generalized Cheeger ansatz}
\label{subsection_Cheegers_ansatz}

We observe that the summands $P, P^*$ of $D=D_1+D_2$ splits into $P=P_1+P_2$ and $P^*=P_1^*+P_2^*$, compatible with the splitting of $D$ in the sense that $D_i=P_i+P^*_i$ for $i=1,2$.
The space of sections sections consists of 6 types of sections. Those that are co-exact with respect to both $P_1, P_2$ are type $1$ while the image of such forms under $P$ are of type $2$. Those that are exact with respect to both $P_1, P_2$ are type $4$ while the image of such forms under $P^*$ are of type $3$. If there is an additional operator $\star$ (which depends on the multi-degrees of the sections) intertwines $P$ and $P^*$ (up to a sign), then $\star_X$ intertwines the sections of types $1,4$ and $2,3$. The sections of types $E,O$ are those in the null space of $P_2+P_2^*$, and sections of type $E$ are $P_1$ co-exact while those of type $O$ are $P_1$ exact.

Here we only look for an ansatz for the sections that have positive eigenvalues for the Laplacian $D^2$. The determination of sections in the null space of both $P_1+P_1^*$ and $P_2+P_2^*$ will be carried out separately in Subsection \ref{subsection_spec_theory_and_cohomology} where the SUSY again plays a key role.
We recall that $D_1=cl(dx)\nabla^E_{\partial_x}$ and its square acts differently on different types of sections, as expressed in equations \eqref{equation_radial_derivatives_1} and \eqref{equation_radial_derivatives_2} and summarize the SL operators that arise for the de Rham and Dolbeault cases in the following remark.

\begin{remark}[Singular Sturm-Liouville eigenvalue problems]
\label{SUSY_ansatz_key_1}
Thus for forms as in the discussion above where both $\psi_1, \psi_2$ satisfy equation \eqref{equation_ansatz_0} and are of a fixed multi-degree, we will show in the subsections below that we can reduce the study of $\Delta_X (u_1(x)\psi_1)$ to the study of eigensections of $\Delta_Z$ of fixed multi-degrees and of the singular SL eigenvalue problem
\begin{equation}
\label{equation_Sturm_Liouville_1_primary}
    L_1[u](x)=-\frac{1}{F(\psi_1)}(F(\psi_1) u'_1)'+\sum_j \frac{\mu_j^2}{f_j^2}u_1=\lambda^2u_1(x).
\end{equation}
In Section \ref{section_Sturm_Liouville} we will study unitarily equivalent forms of this eigenvalue problem.
\end{remark}

\begin{remark}
\label{Remark_how_generally_is_this_possible_Question}
To deduce the structure of the operators given above from the ansatz described above, it suffices to assume that there is a further direct sum decomposition $\oplus_j S_j$ of the space of sections for which the operator $\star_X$ that intertwines $P$ and its adjoint acts by sending sections $\psi$ in a given summand $S_j$ that are in the null space of $P_1$ to those of another $S_l$ where there is a fixed factor $F(x)$ such that $F^{-1}(\psi) \star_X \psi$ is in the null space of $P^*_1$. This is the case for the forms we study, and we present the details in such cases for the benefit of the reader.
\end{remark}

Since the Laplace-type operator $D_X^2$ is defined by composing the Dirac operators on their domains, the boundary condition \ref{boundary_conditions_for_Dirac} leads to the \textbf{generalized Neumann/Dirichlet boundary conditions} for the Laplace-type operators (which we also call \textbf{absolute/relative boundary conditions}) to be
\begin{equation}
\label{generalized_Neumann_boundary_conditions_for_Laplace}
    \sigma(P^*)(dx)u|_{x=1} =0, \quad \sigma(P^*)(dx)Pu|_{x=1} =0
\end{equation}
and 
\begin{equation}
\label{generalized_Dirichlet_boundary_conditions_for_Laplace}
    \sigma(P)(dx)u|_{x=1} =0, \quad \sigma(P)(dx)P^*u|_{x=1} =0
\end{equation}
respectively, where we call the first and the second boundary conditions in each as the $0$-th order and $1$-st order boundary conditions respectively.

We will see that on functions $u(x)F^{-1}(x)$ where $F(x)$ is a non-vanishing function at $x=1$, the $0$-th order conditions reduce to the same boundary conditions are those for $u(x)$ (these will correspond to Dirichlet conditions for the SL problems.
The reason we call the generalized Nemann conditions as such is because in the Dolbeault case, they correspond to the del-bar Neumann condition.

\subsection{Eigensection ansatz: Hodge Laplacian}
\label{subsection_eigensection_ansatz_Hodge}

Here we generalize an ansatz developed by Cheeger in \cite{cheeger1983spectral} to study the spectrum of the Hodge Laplacian on cones to the spaces we study here. We also discuss a version for Dolbeault Laplacians generalizing \cite{jayasinghe2023l2}. While our construction extends trivially for sections with coefficients in flat/holomorphic bundles $E$, we will do this for it for the untwisted case.

\begin{remark}
    If a given K\"ahler action lifts to one on $E$, then the Witten deformed Dolbeault operator is $P_{\varepsilon}=e^{-\varepsilon h}\overline{\partial}_E e^{+\varepsilon h}$, and we have a deformed complex $\mathcal{P}_{W,B,\varepsilon}=(L^2\Omega^{p,\cdot}(X;E), \mathcal{D}_{W,B,\varepsilon}(P_{\varepsilon}),P_{\varepsilon})$ and the corresponding Laplace-type operator $\Box_{\varepsilon}=P_{\varepsilon}P^*_{\varepsilon}+P^*_{\varepsilon}P_{\varepsilon}$ which commutes with $\Delta_Z$ and $\sqrt{-1} L_V$ where the latter is the infinitesimal action of the Hamiltonian vector field $V$ on sections (in the case of a trivial bundle this is the Lie derivative).
\end{remark}

We assume that the operators $P=d / \overline{\partial}$ and $P^*$ act as supersymmetry operators restricted to the $P$ co-exact and exact operators respectively, following Cheeger.
Indeed this is the case for the de Rham complex on $Z$ and the transverse Dolbeault complex discussed in Remark \ref{remark_d_vs_adjoint_rescaling_Dolbeault}.

We consider sections $u(x)\phi$,
where the sections $\phi=\phi_{(\mu_1,...,\mu_m)}$ satisfy
\begin{equation}
\label{equation_ansatz_0_Hodge}
    D_{Z_j}\phi_{(\mu_1,...,\mu_m)}=\mu_j \phi_{(\mu_1,...,\mu_m)}
\end{equation}
for each $j$ (eigensections of each $\Delta_{Z_j}$ with eigenvalue $\mu_j$), pulled back from $Z$ to $X$ by the projection onto a link at any fixed $x$. We further assume that $\phi$ is $d_Z$ co-exact and that $d_Z \phi=\psi$ for an exact eigensection $\psi$ of $\Delta_Z$. Both $\phi$ and $\psi$ have the same eigenvalues $\mu^2_j$ for each $\Delta_{Z_j}$.
Here $u(x)$ is a function on $(0,1)$.
We say that the forms 
\begin{equation}
\label{1_type}
    u_1(x)\phi,
\end{equation}
\begin{equation}
\label{2_type}
    u_1(x)d_Z\phi+u_1'(x)dx \wedge \phi,
\end{equation}
\begin{equation}
\label{3_type}
    \delta_1 (u_2(x)dx \wedge d_Z\phi)-u_2(x)dx \wedge \sum_j \frac{1}{f^2_j(x)}\delta_{Z_j} d_Z \phi,
\end{equation}
\begin{equation}
\label{4_type}
    u_2(x)dx \wedge d_Z \phi,
\end{equation}
\begin{equation}
\label{E_type}
    u_3(x)h,
\end{equation}
\begin{equation}
\label{O_type}
    u_3'(x)dx \wedge h
\end{equation}
are of the types $1, 2, 3, 4, E$ and $O$ respectively, where the primed notation indicates differentiation with respect to $x$. Here the sections $h^k$ are in the null space of each $\Delta_{Z_j}$.
Since
\begin{equation}
    d_X=d_Z+dx \wedge \partial_x , \quad \delta_X= \delta_1 +\sum_j \frac{1}{f^2_j(x)}\delta_{Z_j},
\end{equation}
where $\delta_1$ is the operator in \eqref{equation_definition_delta_1},
we see that $d_X$ maps forms of types $1, E$ to those of types $2,O$ respectively. 
When the sections of a given type are in addition eigensections of the operator $\Delta_X$ with eigenvalue $\lambda^2 >0$, the operator $\delta_X$ is an inverse operator for $d_X$ for the forms of types $2, O$, while also mapping eigensections of $\Delta_X$ of type 4 to those of type $3$. We also see that eigensections of types $1,4$ are intertwined by the operator $\star_X$.

\begin{remark}
\label{SUSY_ansatz_key_2}
Here we use the fact that $dx\wedge \delta_Z= -\delta_Z \wedge dx$, and that
\begin{equation}
    -u(x)dx \wedge \sum_j \frac{1}{f^2_j(x)}\delta_{Z_j} d_Z \phi^k=-u(x)dx \wedge \sum_j \frac{1}{f^2_j(x)}\mu_j^2 \phi^k=P^*(u(x)dx \wedge d_Z \phi^{k})
\end{equation}
where $P^*=\delta_X$, which makes it easier to match this ansatz with that given for the Dolbeault complex in the next subsection.
\end{remark}

It is easy to check that an eigensection of $\Delta_X$ of type $1$ (and $2$ by the supersymmetry) then has a factor $u(x)$ which satisfies \eqref{equation_Sturm_Liouville_1_primary},
simply by applying $\delta_Xd_X$ to the co-closed form, where $\mu^2_j$ are the eigenvalues of the operators $\Delta_{Z_j}$ as introduced in \eqref{equation_ansatz_0}. The case of eigensections of type $E$ (and $O$ by the supersymmetry) yields the same operator, where each $\mu_j$ vanishes since the sections $h$ are $\Delta_{Z_j}$ harmonic.
Similarly one can check that given an eigensection of $\Delta_X$ of type $4$ (and $3$ by the supersymmetry), the function $u(x)$ satisfies the same equation, again by applying $d_X\delta_X$ to the forms of type $4$.

Since $\sigma(P^*)(dx)=\iota_{\partial_x}$ for $P=d$, the generalized Neumann/absolute boundary conditions for the Laplace-type operator in the de Rham case case correspond to $u'(x)=0$ ($(uF^{-1}(\phi))'(x)=0$ for rescaled type 3 and 4 eigensections) (Neumann/Robin SL conditions) for the forms of types $1,2,E,O$, $u(x)=0$ (Dirichlet SL conditions) for the forms of types $3,4$.

Since $\sigma(P)(dx)=dx \wedge$, the generalized Dirichlet/relative boundary conditions correspond to $u(x)=0$ (Dirichlet SL conditions) for the forms of types $1,2,E,O$, $u'(x)=0$ (Robin SL conditions) for the forms of types $3,4$.

\begin{remark} [Poincar\'e duality]
\label{Remark_Poincare_duality_in_SUSY}
It is well known that the Hodge-star operator intertwines the absolute and relative boundary conditions, and in this case is precisely a \textbf{Dirichlet to Neumann operator} for the SL problems (on unrescaled sections). We observe that the Hodge star operator intertwines the forms of types $1,4$, $2,3$ and $E,O$ respectively, and thus it suffices to study Dirichlet type problems on the complex to get an orthonormal basis of eigensections for all the types of forms, using Hodge star and the operators $P,P^*$.
\end{remark}

\subsection{Eigensection ansatz: Dolbeault Laplacian}
\label{subsection_eigensection_ansatz_Dolbeault}

In the case when $X$ is K\"ahler, since the Dolbeault Laplacian is equal to the Hodge Laplacian upto a factor of $2$, the ansatz above suffices to study the eigensections (up to restricting to $(p,q)$ forms for a fixed $p$). The following construction shows how we can study a similar ansatz in the general complex setting, generalizing a construction in \cite{jayasinghe2023l2}.
We saw in Subsection \ref{subsection_Dolbeault_spin_c_Dirac} that the Dolbeault complex is equipped with transverse Dolbeault operators $P_2$ (trivial in when the space $X$ is of real dimension $2$), and its adjoint $P_2^*$.

We consider sections $u(x)\phi$, similar to the ansatz \eqref{equation_ansatz_0}, where $u$ is a function on $(0,1)$. We further assume that $\phi=P_2^* \psi$ for another eigensection $\psi$ of the transverse Laplacian $\Delta_T=P_2 P_2^* + P_2^* P_2$, that satisfies $\nabla_{\partial_x} \phi =0$.
In addition we demand that these sections are eigensections of $\Delta_Z$, that are co-exact with respect to $P_2$, which implies that $P_2 \phi=\psi$, showing that $\psi$ is another eigensection of $\Delta_Z$. Here we use the fact that $D_X=cl(dx) \nabla^E_{\partial_x} + cl(J(dx)) \nabla^E_{{J(\partial_x)}}+ D_T$ (with the rescaled transverse Dirac operator $D_T:=(P_2 +P_2^*)$) where the second and third summands commute.
Both $\phi$ and $\psi$ have the same eigenvalues $\mu^2_j$ for each $\Delta_{Z_j}$.
We say that the forms 
\begin{equation}
\label{1_type_Dol}
    u(x)\phi,
\end{equation}
\begin{equation}
\label{2_type_Dol}
    u(x)P_2\phi+ P_1 (u(x)\phi),
\end{equation}
\begin{equation}
\label{3_type_Dol}
    P^*[u(x) \beta] \wedge P_2 \phi,
\end{equation}
\begin{equation}
\label{4_type_Dol}
    u(x) \beta \wedge P_2 \phi,
\end{equation}
\begin{equation}
\label{E_type_Dol}
    u(x)h,
\end{equation}
\begin{equation}
\label{O_type_Dol}
    P_1 (u(x) h)
\end{equation}
are of the types $1, 2, 3, 4, E$ and $O$ respectively, where the primed notation indicates differentiation with respect to $x$ and where $\beta:=(dx -\sqrt{-1}J(dx))$ (note that $\sigma(P^*)(dx)=\beta \wedge$). Here $h$ are sections in the null space of the transverse Laplacian $P_2 P_2^* + P_2^* P_2$.

Then by arguing similar to the de Rham case one can see that the forms of types $1,3,E$ map to those of types $2,4,O$ under $P$, and vice versa under $P^*$.

Since $\sigma(P^*)(dx)=\iota_{V}$ for $P=\overline{\partial}_E$ where $V=\overline{\beta^{\#}}$, the generalized Neumann/absolute boundary conditions for the Laplace-type operator in the Dolbeault case correspond to $P_1(u(x)\phi^q)=0$ (Robin SL conditions) for the forms of types $1,2,E$, $u(x)=0$ (Dirichlet SL conditions) for the forms of types $3,4,O$.
Since $\sigma(P)(dx)=\beta \wedge$, the generalized Dirichlet/relative boundary conditions correspond to $u(x)=0$ (Dirichlet SL conditions) for the forms of types $1,2,E$, $P_1(F^{-1}(\phi)u(x)\phi)=0$ (Robin SL conditions) for the forms of types $3,4,O$.

\begin{remark}[Serre duality]
\label{Remark_Serre_duality_in_SUSY}
The Hodge star operator implements Serre duality in the complex setting, mapping forms in $L^2\Omega^{p,q}(X;E)$ to $L^2\Omega^{n-p,n-q}(X;E^*)$ where $E$ is a Hermitian vector bundle with dual $E^*$. However the generalized Neumann and generalized Dirichlet boundary conditions are intertwined, and the forms of types $1,2$, $3,4$ and $E,O$ are intertwined by the operator between these complexes. 
We observe that it suffices to study Dirichlet type SL problems on the complex and the Serre dual complex in order to get an orthonormal basis of eigensections for all forms on both complexes using the operators $P,P^*$ and the Hodge star operator. We refer to \cite{jayasinghe2024holomorphic} for more details on Serre duality in the conic setting (the details extend to this setting mutatis mutandis).
\end{remark}

\subsection{More general operators, and comparison with other approaches.}
\label{subsection_general_Dirac_type_other_approaches}

We consider some technical differences in this work, compared to other articles. 
We used the independence properties of the nilpotent operator $P$
to express the Laplace-type operator as in equation \eqref{equation_to_say_something_52}.
We then constructed an ansatz which reduces the problem to the study of $D_1^2+D_2^2$, where 
\begin{equation}
    D_1=cl(dx) \nabla^E_{\partial{x}}, \quad D_2=D_X-D_1,
\end{equation}
even if $\{D_1,D_2\} \neq 0$.

A different approach is used in \cite{Albin_2017_index} in the conic case (c.f.\cite{Albin_hodge_theory_cheeger_spaces} for the signature operator). There the adapted co-tangent bundle specializes to the wedge co-tangent bundle, and wedge metrics are conformal to a \textit{complete edge metric}, where the rescaled edge vector fields that locally span the edge tangent bundle are closed under the Lie bracket. 
For the operator on wedge forms, they show that the scaling behaviour (in the radial variable $x$) of the terms which occur in the commutator $\{D_1,D_2\}$ is similar to the scaling of $D_2$, simplifying the analysis. \textbf{Since $f_j(x)=x$ in the conic/wedge setting, $f'_j(x)=1$, which plays an important role in the simplification there.}

The results in \cite{Albin_2017_index} were used in \cite{jayasinghe2023l2} to show that Dirac operators compatible with spin$^\mathbb{C}$ structures in the presence of a compatible almost complex structure in the local models could be equipped with a generalization of the $\overline{\partial}$-Neumann operators used in the Dolbeault setting, utilizing work in \cite{epstein2006subelliptic,EpsteinSubellipticSpinc3_2007} where in particular it is shown that the spinors can be identified with sections in twisted forms of a fixed degree $p$ with the $(p,q)$ decomposition given by the choice of almost complex structure. Extending the work in this article to such complexes with only $\mathcal{N}=1$ SUSY would pave the way for various applications.

\subsection{Witten deformed Laplace-type/ semi-classical Schr\"odinger operators}
\label{subsection_Witten_deformed_Laplace}

Here we build on Subsection \ref{Subsection_Witten_deformed_ops}  and the references given there.
The deformed Laplace-type operators in the case of the de Rham complex $\Delta_{X,\varepsilon}=D^2_{\varepsilon}$ can be written as
\begin{equation}
    \Delta_{X,\varepsilon}=\Delta_X+\varepsilon K +\varepsilon^2 |dh|^2
\end{equation}
where $K=\{ D, \widehat{cl}(dh)\}$.
Since $h$ is a function of the radial variable $x$, and using the property that the anti-commutator $\{cl(\theta_1), \widehat{cl}(\theta_2)\}$ vanishes for any two forms $\theta_1,\theta_2$ we can simplify this term to
\begin{equation}
    K=cl(dx)\nabla_{\partial_x} \widehat{cl}(dh) + \widehat{cl}(dh) cl(dx) \nabla_{\partial_x}
\end{equation}
which can be simplified to 
\begin{equation}
    K=cl(dx) h''(x) \widehat{cl}(dx) + \{cl(dx),\widehat{cl}(dh)\} \nabla_{\partial_x}=cl(dx)\widehat{cl}(dx) h''(x)
\end{equation}
and the definitions of the Clifford operators can be used to see that $K=h''(x)M$ where $M=+1$ on normal forms (forms $v$ that satisfy $\sigma(d)(dx)v=0$, i.e. can be written as $dx \wedge v$ for a form $v$) and $M=-1$ on tangential forms (forms $v$ which satisfy $\sigma(d^*)(dx)v=0$ on $X$).
In the case of the Dolbeault complex, we denote by $V$ the K\"ahler Hamiltonian vector field, and the deformed Laplace type operator by
\begin{equation}
\label{equation_Laplacians_deformed_de_Rham_Dolbeault_relation}
    \Box_{X,\varepsilon}:=D^2_{\varepsilon}=\frac{1}{2} \Delta_{X,\varepsilon} +\varepsilon \sqrt{-1} L_V
\end{equation}
which generalizes the well known identity between the Hodge de Rham and Dolbeault Laplacians on smooth K\"ahler manifolds when $\varepsilon=0$. Here $\sqrt{-1} L_V$ commutes with $\Delta_{X,\varepsilon}$, and we can write $J(\partial_x)= \frac{1}{h'(x)} V$, where $V^{\flat}=\alpha$ as in Subsection \ref{Subsection_adapted_geometric_structures}.

Given functions $h(x)$ of the radial variable $x$, the deformed operators can be studied using separation variables similar to the undeformed case. Here it suffices to study $\Delta_{X,\varepsilon}$ deformation due to the relation \eqref{equation_Laplacians_deformed_de_Rham_Dolbeault_relation}. Using the ansatz in \eqref{equation_ansatz_0}, the forms of types $1,2,3,4,E,O$ can be used to find Sturm-Liouville operators similar to the deformed case.

Let us consider $D_{\varepsilon}=D+ \sqrt{-1} \varepsilon \widehat{cl}(dh)$, where $D$ is the Hodge-Dirac operator and $h$ is a function of $x$. 
Then we see that 
\begin{equation}
\label{equation_need_name_not_1}
    D^2_{\varepsilon}=D^2+\varepsilon h''(x)K +\varepsilon^2 (h'(x))^2,
\end{equation}
which simplifies to
\begin{equation}
\label{equation_Sturm_Liouville_2}
    L_{\varepsilon}[u]=L[u]+ K\varepsilon h''(x) u+[h'(x)]^2 \varepsilon^2 u =\lambda^2 u.
\end{equation}
where $K=\pm 1$, for the $u(x)$ factor of the ansatz \ref{equation_ansatz_0} where $L[u]=L_{\varepsilon=0}[u]$ are the Sturm-Liouville operators in Remark \eqref{SUSY_ansatz_key_1}. 

\begin{remark}
\label{Remark_signs_Morse_functions_dualities}
We observe that while $K=1$ for tangential forms and $K=-1$ for normal forms, the term where it appears in equation \eqref{equation_need_name_not_1} is $\varepsilon h''(x)K$. If we replace $h$ by $-h$, we see that this term changes by an overall minus sign while the other terms do not change in \eqref{equation_need_name_not_1}. This means that the SL operator in \eqref{equation_need_name_not_1} acting on tangential and normal forms switch when we substitute $h$ by $-h$. This is related to the fact that local (and even global) Poincar\'e duality corresponds to both the Hodge star operator (which intertwines normal and tangential forms) as well as switching $h$ with $-h$ in the Morse polynomials. This observation is key in formulating local Morse polynomials in singular settings, and we refer to \cite{jayasinghe2023l2,jayasinghe2024holomorphic} for details in the conic setting.
\end{remark}

\begin{example}
Let us assume $h=x^{c+1}/(c+1)$.
Then the Witten deformed Laplace-type operator can be shown to be
\begin{equation}
    D^2_{\varepsilon}=D^2+\varepsilon cx^{c-1}K +\varepsilon^2 x^{2c}
\end{equation}
where $K=\pm 1$ (depending on whether or not there is a $dx$ factor) and using an ansatz similary to equation \eqref{equation_ansatz_1}  the deformed Laplacian can be simplified to the study of a Sturm-Liouville equation of the form
\begin{equation}
\label{equation_Sturm_Liouville_2_example}
    L_{\varepsilon}[u]=L[u] \pm \varepsilon c x^{c-1}u+x^{2c} \varepsilon^2u =\lambda^2 u.
\end{equation}
\end{example}

The boundary conditions corresponding to the generalized Neumann/ Dirichlet conditions for the deformed complex simplify similar to the undeformed complex and yields the same types of SL boundary conditions for the operators.

\section{Singular Sturm Liouville problems}
\label{section_Sturm_Liouville}

We saw in Subsection \ref{subsection_Laplacians} that when the Laplace-type operators are restricted to forms of the $6$ types they correspond to certain singular SL operators, with Robin boundary conditions at $x=1$ and \textit{ideal} boundary conditions at $x=0$. Here we study the spectral theory for such operators, and show that a large class of metrics and Morse functions, K\"ahler Hamiltonian Morse functions are of discrete type (see also Subsection \ref{subsection_prior_results}).

We review SL theory following \cite{zettl2005sturm}.
A general Sturm-Liouville differential expression is one of the form
\begin{equation}
\label{equation_standard_SL_operator}
    \frac{1}{w(x)} \Big[ -\frac{d}{dx} p(x) \frac{d}{dx} u(x) +q(x) u(x) \Big]
\end{equation}
on an interval $(a,b) \subset \mathbb{R}$, where $p,q,r$ are considered to be Lebesgue measurable functions on $(a,b)$ where $r,q, 1/p \in L^1_{loc}((a,b))$, $r,p >0$ almost everywhere on $(a,b)$ and $q$ is real valued almost everywhere on $(a,b)$.
The equations 
\begin{equation}
    -\frac{d}{dx} p(x) \frac{d}{dx} u(x) +w(x)q(x) u(x) = \lambda^2 w(x)u(x)
\end{equation}
for $\lambda^2 \in \mathbb{C}$ are the Sturm-Liouville eigenvalue equations for such operators.
If the functions $p,q,r$ are simply smooth functions on $(a,b)$, not necessarily satisfying the integrability conditions $(r,q,1/p \in L^1_{loc}((a,b)))$ stated we will refer to them as \textbf{singular SL operators}.

If the function $1/p, q, w$ are in $L^1((d,b))$, with respect to the Lebesgue measure, for some $d>a$, then the endpoint $b$ (finite or infinite) is said to be a regular endpoint, as is the case in the problems we study in this article. We say that $u(x)$ satisfies a \textbf{regular Sturm-Liouville boundary condition} if it satisfies
\begin{equation}
\label{equation_Sturm_Liouville_boundary_condition_main}
    \gamma_1 u(1) + \gamma_2 u'(1)=0
\end{equation}
for some real constants $\gamma_1, \gamma_2$ where at least one is not zero and the boundary condition is determined by $(\gamma_1,\gamma_2)$. If $\gamma_2=0$ we call it a Dirichlet condition, and if $\gamma_1=0$ we say it is a Neumann condition, and if neither of $\gamma_1,\gamma_2$ vanish, we say it is a Robin condition.
When we consider the infinite interval $(0,\infty)$, we impose $L^2$ boundary conditions at $\infty$ (i.e., demand that the sections are in $L^2$ which imposes decay conditions near $x=\infty$).

The functions $1/p,q,w$ for the Sturm-Liouville problems that we study are not in $L^1((0,b))$ due to blow-up of the $L^1$ norm when approaching $x=0$, and in these situations the endpoint $0$ can generally be of several different types (including limit circle, limit point, oscillatory and non-oscillatory) and we refer to Definition 7.3.1 of \cite{zettl2005sturm} for more details, and \cite{stanfill2024sturmliouville} for some refinements of these types.
Moreover, there are many unitarily equivalent forms of SL operators, where for some forms, the functions $1/p,q,w$ may not satisfy the integrability conditions at $x=0$, as is the case for some equivalent forms of Bessel operators that arise in this article (see Remark \ref{remark_Bessel_not_fit}).

In Section \ref{subsection_Laplacians} we obtained the SL problem
\begin{equation}
\label{equation_Sturm_Liouville_1_primary_111}
    L_1[u](x)=-\frac{1}{F}(F u'_1)'+\sum_j \frac{\mu_j^2}{f_j^2}u_1=\lambda^2u_1(x).
\end{equation}
which is a symmetric operator on $L^2(I;Fdx)=F^{-1/2}L^2(I;dx)$ where $I=(0,1]$, and we can instead study the conjugated symmetric operator
$F^{1/2}L_1 F^{-1/2}$ on $L^2(I;dx)$ where it will be of the form $L[u]=(-\partial_x^2+V)u$ for some \textit{potential} function $V$ (since it is a symmetric operator with respect to the Lebesgue measure on an interval, no first order derivatives appear).

\begin{remark}
    This corresponds to the study of the unitarily equivalent operator $F(\psi_i)\Delta_X F^{-1}(\psi_i)$ 
    for any given $\psi_i$ with fixed multi-degree unitarily conjugate to $\Delta_X$, where the new operator acts on sections of the Hilbert space with volume measure $(dx dvol_Z)$.
\end{remark}

Thus we consider the action of $L_1[u]$ on $u=vF^{-1/2}$. Computing the derivative terms we get
\begin{equation}
    \frac{1}{F} [F(vF^{-1/2})']'=\frac{1}{F^{1/2}} v'' -\frac{1}{2}F^{-1/2}[F^{-1/2}F']' (v F^{-1/2})
\end{equation}
and we write the new operator
\begin{equation}
\label{equation_Sturm_Liouville_1_primary_conjugated}
    L[v](x)=-v''+Gv+\sum_j \frac{\mu_j^2}{f_j^2}v, \quad G(x):=\frac{1}{2}(F^{-1/2}\partial_x)^2(F),
\end{equation}
and in the notation of \eqref{equation_standard_SL_operator}, $L_1$ corresponds to $p=w=F$, while $L$ corresponds to $p=w=1$, and is a Schr\"odinger type operator.

\begin{remark}
\label{remark_Bessel_not_fit}
Given a swp metric where for each $j$, $f_j(x)=x^{c_j}$ for some real $c_j>0$, it is easy to check that each $F(\psi)=x^B$ for sections $\psi$ of a fixed multi-degree on $Z$ where $B \in \mathbb{R}$ depends on the multi-degree.
Then one can check that $G=B(B/2-1)(x^{-2})$ (not $L^2$ bounded near $x=0$ unless $B=0,2$) and \eqref{equation_Sturm_Liouville_1_primary_conjugated} corresponds to 
\begin{equation}
\label{Main_Sturm_Liouville_Schrodinger}
    L[v](x)=-v''+\frac{B(B/2-1)}{2x^2}v+\sum_j \frac{\mu_j^2}{f_j^2}v.
\end{equation}
This is unitarily equivalent to
\begin{equation}
\label{equation_Sturm_Liouville_1_specialized}
    L_1[u](x)=-\frac{1}{x^B}(x^B u')'+\sum_j \frac{\mu_j^2}{x^{c_j}}u=-u''-\frac{B}{x}u'+\sum_j \frac{\mu_j^2}{x^{c_j}}u
\end{equation}
which for the case when all $c_j=1$ is a Bessel operator (see Step 1.1.1 of the proof of Proposition 5.33 of \cite{jayasinghe2023l2}).
\end{remark}

\begin{definition}
\label{Definition_metrics_discrete_type}
If the SL problems with the domains induced by those for the Laplacians on $X$ have discrete spectrum, then we say that the singular warped product metric is of \textit{\textbf{discrete type}}.    
\end{definition}

In order to begin understanding choices of domains for such operators near $x=0$, we begin by stating Theorem X.10 of \cite{reed1975ii}.

\begin{theorem}
\label{Theorem_self_adjoint_Reed_Simon}
Let $V$ be a continuous and \textit{positive} near $x=0$. If $V(x) \geq 0.75 x^{-2}$ near $x=0$, then $-\partial_x^2+V$ is in the limit point case at zero. If for some $\delta>0$, $V(x) \leq (3/4 -\delta)x^{-2}$ near zero, then $-\partial_x^2+V$ is in the limit circle case.
\end{theorem}

This shows that for all SL problems where $f_j(x)=x^{c_j}$ for $c_j>1$ and for $\mu_j>0$, the operator is essentially self-adjoint. If there are $\Delta_Z$ harmonic sections, i.e. sections $\phi$ for which all $\mu_j=0$, then we are in the setting of sections of types $E,O$ and the positivity of the function $G$ is equivalent to $B \notin (0,2)$.
For essential self-adjointness it suffices that $B/2(B/2-1) >3/4$, equivalently when $B \in [-1,3]$.
This case can be studied similar to the wedge case studied in \cite{Pierre_Exposition}, since the terms arising are of the same form when all $\mu_j=0$.

We observe that when $B=0$ and all $\mu_j=0$, the adapted Witt condition is not satisfied. However when $B \neq 0$, the forms $u=\phi x^{-B/2}$ where $\phi$ is $\Delta_Z$ harmonic are not closed, and do not appear in the cohomology. We present the following example.

\begin{example}
\label{Example_Cone_torus_1}
Consider the cone over the 4 torus $C_x(T^4)$ with the metric $dx^2+x^2(\sum_{j=1}^4 d\theta_j^2)$. The section $\omega=x^{-1} d\theta_1$ is a $\Delta_X$ harmonic form. Note that $\delta d \omega$ is given by $\star^{-1} d \star_X (-x^{-2} dx \wedge d\theta_1)$ up to a sign, and $\star_X (-x^{-3} dx \wedge xd\theta_1)=-x^{-3} xd\theta_2 \wedge xd\theta_3 \wedge xd\theta_4$ is in the kernel of $d$. Thus $\delta d \omega=0$. Similarly it is easy to check that $\delta \omega=0$.
The form $\omega$ is $L^2$ bounded, but does not satisfy the generalized Neumann boundary condition since $\iota_{\partial_x} d\omega$ does not vanish at the boundary. Nor does it satisfy the generalized Dirichlet boundary condition since $dx \wedge \omega$ does not vanish at the boundary.
\end{example}

Theorem \ref{Theorem_self_adjoint_Reed_Simon} indicates that there is a quantization condition at $x=0$ on all sections in the domains we study, corresponding to the $L^2$ boundedness. The choices of self-adjoint boundary conditions for the operator $\Delta_X$ corresponds to stronger boundary conditions for the singular SL operator at $x=0$. Since the form of the operator when all $\mu_j$ vanish is similar to the case in \cite{Pierre_Exposition}, we refer the reader to that article for a more comprehensive exposition on determining choices of domains. We provide the following example which showcases the type of condition imposed on SL problems of types $E,O$.

\begin{example}
For the space in Example \ref{Example_Cone_torus_1}, the section $\omega_2= u(x) d\theta_1 \wedge d\theta_2$ is of type E when we set $u(x)=\sin(\lambda (x-c))$, and solves the equation $\Delta \omega_2= \lambda^2 \omega_2$, and the function $u(x)$ satisfies \eqref{equation_Sturm_Liouville_1_specialized} with $B=0$. If $\omega_2 \in \mathcal{D}_{\min}(d)$,
the inner product $\langle d\omega_2, \omega_3 \rangle_{\Lambda(\prescript{a}{}{T^*X}})$
must vanish at $x=0$ for any $\omega_3$ in the maximal domain of $\mathcal{D}(\delta)$. The sections $\omega_3=d\theta_1 \wedge d\theta_2$ and $dx \wedge \omega_3$ are both in the maximal domain of $D=d+\delta$ (in fact they are harmonic) and we see that 
\begin{equation}
    \langle d\omega_2, dx \wedge \omega_3 \rangle_{\Lambda(\prescript{a}{}{T^*X}})=u'(x) dx \wedge (dvol_T)
\end{equation}
which does not vanish at $x=0$ (even though it is integrable) unless $\cos(\lambda (-c))$, and together with a generic Robin boundary condition at $x=1$, the eigenvalues $\lambda$ are quantized.

For forms $u(x)d\theta_1 \wedge d\theta_2$ with some general function $u(x)$, the argument above shows that there are vanishing conditions for $u(x)$ and $u'(x)$ for eigensections of types $E,O$ imposed by the choices of domains that we study (in fact this extends for any choice of self-adjoint domain for $\Delta_X$).
\end{example}

We provide the following proof of compactness of resolvents for Schr\"odinger operators when $V \rightarrow \infty$ as $x \rightarrow 0$.

\begin{proposition}
\label{Propostion_Sturm_Liouville_1}
Consider the operator,
\begin{equation}
    H_\varepsilon=-\partial^2_x +V_1(x)+V_{2,\varepsilon}(x), \quad V_1(x):=G(x)+\sum_j \frac{\mu_j^2}{f_j^2(x)}, \quad V_{2,\varepsilon}(x):=\varepsilon^2 (h'(x))^2 \pm \varepsilon h''(x)
\end{equation}
acting on a self-adjoint domain $\mathcal{D} \subset L^2(I,dx)$, 
where $h(x)$ is a radial Morse functions as in Definition \ref{Definition_radial_Morse_functions} $f_j(x)=x^{c_j}$ for some $c_j>0$ for each $j$, and $G(x)=\frac{B(B/2-1)}{2x^2}$ for some finite $B$.
Then
\begin{enumerate}
    \item for any $\varepsilon \geq 0$, if $I=(0,L)$ is a finite interval with SL conditions at $x=L$ determining the domain, OR
    \item for any $\varepsilon >0$, if $I=(0,\infty)$ with $L^2$ boundary conditions at $\infty$ determining the domain,
\end{enumerate}
the operator $H_{\varepsilon}$ has discrete spectrum.
\end{proposition}

We deduce the result from the technical work in \cite{Jesus2018Wittens,Jesus2018Wittensgeneral}, built on the analysis of Dunkl harmonic oscillators in \cite{Jesus_Dunkl_2014,Jesus_Dunkl_2015}, as well as classical results in \cite{simon2008schrodinger,reed1975ii}. The results in \cite{Jesus2018Wittensgeneral} were proven for the choices of domains that we pick near $x=0$ in this article, for the de Rham complex. 
The SL operators (with domains) arising in the Dolbeault complex can be represented as SL operators \textit{for some} de Rham complex.

\begin{proof}
We first observe that for any $x>0$, the problem is of (singular in general) Sturm-Liouville type.
In the case where all $c_j \leq 1$ and when $h(x)=x^2$, discreteness of the spectrum was established for the minimal and maximal domains of the Witten deformed Hodge Laplacian in \cite{Jesus2018Wittensgeneral} (see Theorem 1.1 of \textit{loc. cit}). This covers the case when $L=\infty$ for such metrics and Morse functions. There are well known arguments which show that if the potential grows to $\infty$ as $x$ goes to $\infty$, then $\infty$ is in the limit point case and that there is discrete spectrum (see the results in \cite{simon2008schrodinger}). This shows we have the second numbered statement for the radial Morse functions in Definition \ref{Definition_radial_Morse_functions} since it guarantees that the term $V_{2,\varepsilon}$ grows as $x$ goes to $\infty$ for $\varepsilon>0$. Imposing SL boundary conditions at $x=L$, SL theory can be used to show that there is discrete spectrum for finite intervals for such metrics and Morse functions.

For the case where there is some $\mu_j>0$ for which $c_j>1$, we see that the potential $V_1$ grows faster than $x^{-2}$ near $x=0$, and by Theorem \ref{Theorem_self_adjoint_Reed_Simon} it is in the limit point case. In these cases, if all $\mu_j=0$, then the problem is equivalent to the case when all $c_j \leq 1$. Indeed for any $B$, $G(x)=\frac{B(B/2-1)}{x^2}$ can be realized as $G(x)$ in \eqref{equation_Sturm_Liouville_1_primary_conjugated} coming from a geometric problem by taking each link $Z_j$ to be a torus of sufficiently high dimension (which has harmonic sections in each degree), metrics with $f_j(x)=x^{c_j}$ and some form multi-degree for which the factor $F$ is $x^B$. Then applying the results of \cite{Jesus_Dunkl_2015,Jesus2018Wittensgeneral} to those geometric settings yields the necessary analytic results.
\end{proof}

In the case of conic (wedge) metrics, this was studied by Cheeger (see \cite[\S 3]{cheeger1983spectral}, \cite{cheeger1983hodge}), and he discusses how small eigenvalues $\mu$ for the Laplace-type operators on the links $Z$, correspond to choices of self-adjoint domains. The metrics on the links can be rescaled to make these eigenvalues larger until the corresponding Dirac-type operator is essentially self-adjoint. 
The SL operators change with such rescalings due to the eigenvalue changes, and one is in the limit point case of the Bessel operators in the conic case. Precise conditions in terms of the eigenvalues are worked out in \cite{Albin_2017_index} for general Dirac operators. The general adapted case is similar, but we do not study all choices of self-adjoint extensions in this article.

\section{Supersymmetric trace formulas}
\label{Section_Supersymmetric_trace_formulas}

We now prove the main results formulated in the introduction, first studying the spectral theory and cohomology for the complexes studied in this article, the key ingredients for which have been established in the previous sections using SUSY and which continues to play a key role in the proofs.
Next we study supertraces for the complexes, showing how Lefschetz supertraces simplify to supertraces on the cohomology groups of the complex, establishing holomorphicity properties of functions involved in renormalization and showing how in the case of group actions they correspond to Laurent series.

\subsection{Spectral theory and cohomology}
\label{subsection_spec_theory_and_cohomology}

The following is a restatement of Theorem \ref{Theorem_main_spectral_intro_version}.

\begin{theorem}
\label{Theorem_main_spectral}
Given a twisted de Rham/Dolbeault complex $\mathcal{P}_{W,B,\varepsilon}=(H, \mathcal{D}(P_\varepsilon),P_\varepsilon)
$ on $X$ with a swp metric of discrete type for the case where $\varepsilon=0$, of a Morse function of discrete type for the case when $\varepsilon>0$ (and is K\"ahler Hamiltonian in the case of a twisted Dolbeault complex). We denote by $\Delta_\varepsilon$ the Witten deformed Hodge Laplacian, where $\varepsilon=0$ corresponds to the undeformed complex. Then for any such complex,
\begin{enumerate}
    \item there exists an orthonormal basis $\{\psi_{n,k}\}_{n \in \mathbb{N}, k \in \mathbb{N}}$ where each $\psi_{n,k}=u_{n,k} \phi_{n}$ is an eigensection of $\Delta_{\varepsilon}$ (for each degree $q$) where $u_{n,k}$ is a form on $(0,1)_x$ and $\phi_{n}$ is an eigensection of $\Delta_Z$ with eigenvalue $\mu^2_n$ as described above.
    \item The sections $\psi_{n,k}$ are eigensections of $\Delta_{\varepsilon}^q + \Delta_Z^q$ in the same domain as that for $\Delta_{\varepsilon}^q$ and this operator has discrete spectrum, with eigenvalues $\lambda_{n,k}^2 + \mu^2_{n}$, where $\lambda_{(n,k)}^2$ are eigenvalues of the operator $\Delta_{\varepsilon}^q$.
    \item For each $P_\varepsilon$ co-exact eigensection $\psi_{n,k}$ with positive eigenvalue $\lambda^2_{n,k}$ for $\Delta_{\varepsilon}$, $\frac{1}{\lambda_{n,k}} P_\varepsilon\psi_{n,k}$ is an exact eigensection with the same eigenvalues for $\Delta_{\varepsilon}^{q}$ and $\Delta_Z$.
\end{enumerate}
\end{theorem}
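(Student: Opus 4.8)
\emph{Overall strategy.} I would present the three assertions as an assembly of the separation-of-variables and Hodge-theoretic facts obtained in the previous sections, the organising principle being that on the swp (resp.\ Morse) model the geometry is a warped product of the interval $(0,1)_x$ with $Z$, so that $\Delta_\varepsilon$ commutes with the fibrewise Laplacian $\Delta_Z$. Concretely, I would decompose $H$ into the orthogonal Hilbert direct sum $\bigoplus_n H_n$, where $H_n$ is the $\mu_n^2$-isotypic component for $\Delta_Z$, spanned by sections of the shape $u\,\phi_n$ with $u$ a form on $(0,1)_x$ and $\phi_n$ a $\Delta_Z$-eigensection of eigenvalue $\mu_n^2$. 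As recalled above, each $H_n$ reduces $\Delta_\varepsilon$, and the restriction $\Delta_\varepsilon|_{H_n}$ is the self-adjoint realisation, cut out by the domain condition packaged into $\mathcal{D}(P_\varepsilon)$, of a regular-singular ordinary differential operator $L_n$ on $(0,1)_x$; by the discrete-type (resp.\ K\"ahler Hamiltonian Morse) hypothesis this $L_n$ has compact resolvent.

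\emph{Parts (1) and (2).} From the previous paragraph each $L_n$ has an orthonormal eigenbasis $\{u_{n,k}\}_k$ of forms on $(0,1)_x$ with eigenvalues $\lambda_{n,k}^2 \to \infty$; setting $\psi_{n,k}=u_{n,k}\phi_n$, with $\phi_n$ running (with multiplicity) over an orthonormal eigenbasis of $\Delta_Z$, then produces the orthonormal basis of $\Delta_\varepsilon$-eigensections asserted in (1), in each fixed degree $q$. For (2), since $\Delta_Z$ acts only on the $\phi_n$-factor one has $(\Delta_\varepsilon^q+\Delta_Z^q)\psi_{n,k}=(\lambda_{n,k}^2+\mu_n^2)\psi_{n,k}$ on these eigensections. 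To identify the domains I would invoke the a priori estimate $\mu_n^2\le C(1+\lambda_{n,k}^2)$ established above (the structure of the radial operators $L_n$ forces the radial spectrum to dominate the fibre spectrum); it makes $\lambda_{n,k}^2+\mu_n^2$ uniformly comparable to $1+\lambda_{n,k}^2$, whence $\mathcal{D}(\Delta_\varepsilon^q+\Delta_Z^q)=\mathcal{D}(\Delta_\varepsilon^q)$, and it makes the spectrum $\{\lambda_{n,k}^2+\mu_n^2\}$ discrete, since $\mu_n^2\to\infty$ with finite multiplicities and $\lambda_{n,k}^2\to\infty$ in $k$ for each fixed $n$.

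\emph{Part (3).} Writing $\delta_\varepsilon=P_\varepsilon^{\ast}$ for the Hilbert space adjoint, so that $P_\varepsilon^2=\delta_\varepsilon^2=0$ and $\Delta_\varepsilon=P_\varepsilon\delta_\varepsilon+\delta_\varepsilon P_\varepsilon$ on $\mathcal{D}(\Delta_\varepsilon)\subseteq\mathcal{D}(P_\varepsilon)\cap\mathcal{D}(\delta_\varepsilon)$, I would run the classical Hodge argument. If $\psi_{n,k}$ is $P_\varepsilon$-co-exact, say $\psi_{n,k}=\delta_\varepsilon\eta$, with $\Delta_\varepsilon\psi_{n,k}=\lambda_{n,k}^2\psi_{n,k}$ and $\lambda_{n,k}>0$, then $\delta_\varepsilon\psi_{n,k}=\delta_\varepsilon^2\eta=0$, hence $\delta_\varepsilon P_\varepsilon\psi_{n,k}=(\Delta_\varepsilon-P_\varepsilon\delta_\varepsilon)\psi_{n,k}=\lambda_{n,k}^2\psi_{n,k}$. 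Applying $P_\varepsilon$ gives $\Delta_\varepsilon(P_\varepsilon\psi_{n,k})=P_\varepsilon\delta_\varepsilon(P_\varepsilon\psi_{n,k})=\lambda_{n,k}^2\,P_\varepsilon\psi_{n,k}$, and since $P_\varepsilon$ preserves the fibrewise decomposition $H=\bigoplus_n H_n$ one also gets $\Delta_Z(P_\varepsilon\psi_{n,k})=\mu_n^2\,P_\varepsilon\psi_{n,k}$; thus $P_\varepsilon\psi_{n,k}$, which lies in the range of $P_\varepsilon$ and is therefore exact, is an eigensection of $\Delta_\varepsilon$ and $\Delta_Z$ with the same eigenvalues $\lambda_{n,k}^2$ and $\mu_n^2$. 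Finally $\|P_\varepsilon\psi_{n,k}\|^2=\langle P_\varepsilon\psi_{n,k},P_\varepsilon\psi_{n,k}\rangle=\langle\delta_\varepsilon P_\varepsilon\psi_{n,k},\psi_{n,k}\rangle=\lambda_{n,k}^2\|\psi_{n,k}\|^2=\lambda_{n,k}^2$, so $\lambda_{n,k}^{-1}P_\varepsilon\psi_{n,k}$ has unit norm, which is (3). (This uses $\psi_{n,k}\in\mathcal{D}(P_\varepsilon)$ and $P_\varepsilon\psi_{n,k}\in\mathcal{D}(\delta_\varepsilon)\cap\mathcal{D}(\Delta_\varepsilon)$, which I would deduce from the inclusion above and from the compatibility of $P_\varepsilon,\delta_\varepsilon$ with the chosen boundary behaviour at $x=0$ proved earlier.)

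\emph{Expected main obstacle.} The algebra above is routine; the substantive inputs, all supplied by the earlier SUSY analysis, are that the chosen self-adjoint realisation of $\Delta_\varepsilon$ genuinely inherits the warped-product structure --- that each $H_n$ reduces it and that $P_\varepsilon,\delta_\varepsilon$ respect both the $\Delta_Z$-decomposition and the boundary conditions at the singular end, so that they carry $\mathcal{D}(\Delta_\varepsilon)$-eigensections to $\mathcal{D}(\Delta_\varepsilon)$-eigensections --- together with the spectral-domination bound $\mu_n^2\le C(1+\lambda_{n,k}^2)$ used for the domain identification in (2). I expect that checking this compatibility at the level of operator domains, rather than merely formally, is the only step that requires real care at this stage.
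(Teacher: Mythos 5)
Your proposal is correct and takes essentially the same route as the paper: items 1 and 2 are obtained from the separation-of-variables reduction to singular Sturm--Liouville realisations on $(0,1)_x$ (with the discrete-type hypothesis supplying discreteness) together with the commutation of $\Delta_\varepsilon$ with $\Delta_Z$, and item 3 is the standard supersymmetry computation $\Delta_\varepsilon P_\varepsilon\psi_{n,k}=\lambda_{n,k}^2P_\varepsilon\psi_{n,k}$, $\|P_\varepsilon\psi_{n,k}\|=\lambda_{n,k}$. The only point where you go beyond the paper's (very terse) argument is in making explicit the comparability bound $\mu_n^2\le C(1+\lambda_{n,k}^2)$ used to identify the domain of $\Delta_\varepsilon^q+\Delta_Z^q$ with that of $\Delta_\varepsilon^q$, a point the paper leaves implicit in its appeal to the earlier SL/SUSY analysis.
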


\begin{proof} 
\noindent \textbf{Proof of item 1:} Given a twisted de Rham/ Dolbeault complex $\mathcal{P}_{W,B,\varepsilon}(X)$ (which is Witten deformed if $\varepsilon \neq 0$), we saw in Section \ref{subsection_Laplacians} that the we could use a separation of variables ansatz to reduce the problems to SL problems in Remark \ref{SUSY_ansatz_key_1} and which are singular SL problems with Dirichlet conditions at $x=1$ (see Remark \ref{Remark_Poincare_duality_in_SUSY} and \ref{Remark_Serre_duality_in_SUSY}), where the ansatz matches the structure of the eigensections in item 1 of the Theorem.
This proves item 1 of the Theorem since we assume the swp metrics and (the Morse functions/ K\"ahler Hamiltonians for $\varepsilon>0$) are of discrete type.

\noindent\textbf{Proof of items 2, 3 :} 
Item 2 is a consequence of item 1, together with the fact that $\Delta_{\varepsilon}$ commutes with $\Delta_Z$. 
Item 3 is a consequence of the construction of the basis (outlined in Subsection \ref{subsection_Laplacians}) using supersymmetry.
This proves the theorem.
\end{proof}

\begin{proposition}[Cohomology of the twisted de Rham complexes]
\label{proposition_cohomology_of_de_Rham}
Given a twisted de Rham complex $\mathcal{P}_{\max,N}(X)=(L^2\Omega^{\cdot}(X;E), \mathcal{D}_{\max,B}(d_E),d_E)$ where $X$ is equipped with a discrete swp metric, the harmonic representatives of the cohomology of the complex in degree $k$ are given by the vector space
\begin{equation}
    s\in \mathcal{H}^{k}(Z;E), ||s||_{L^2(X;\Lambda^*(\prescript{a}{}{T^*X}) \otimes E}) < \infty.
\end{equation}
where we denote by $\mathcal{H}^{k}(Z;E)$ the de Rham cohomology of the space $Z$.
\end{proposition}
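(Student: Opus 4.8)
The plan is to reduce the cohomology computation to a Hodge-theoretic statement and then to solve the associated radial Sturm--Liouville problems at eigenvalue zero, separately over each eigenspace of $\Delta_Z$. First I would use the spectral theory already established: by Theorem \ref{Theorem_main_spectral} (applied with $\varepsilon = 0$) the Hodge Laplacian $\Delta_0^k$ of $\mathcal{P}_{\max,N}(X)$ is self-adjoint on the indicated domain and has discrete spectrum with an orthonormal eigenbasis, so the $L^2$-complex satisfies the Hodge--Kodaira decomposition and $H^k(\mathcal{P}_{\max,N}(X))$ is represented by the finite-dimensional space $\ker\Delta_0^k$ of harmonic $k$-forms lying in $\mathcal{D}_{\max,B}(d_E)$. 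It therefore suffices to identify $\ker\Delta_0^k$, which I will do as the set on the right-hand side of the asserted identity, under the identification of $\mathcal{H}^k(Z;E)$ with the $x$-independent forms on the collar $(0,1)_x\times Z$.

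Next I would invoke the separation-of-variables description of item 1 of Theorem \ref{Theorem_main_spectral} and Remark \ref{SUSY_ansatz_key_1}: every element of $\ker\Delta_0^k$ is an $L^2$-combination of eigensections $\psi_{n,k} = u_{n,k}\phi_n$ with $\lambda_{n,k}^2 = 0$, where $\phi_n$ runs over $\Delta_Z$-eigensections of eigenvalue $\mu_n^2$ and $u_{n,k}$ is a radial form solving the corresponding singular Sturm--Liouville problem at eigenvalue zero, inside the maximal domain at $x=0$ and satisfying at $x=1$ the boundary conditions recorded in Remark \ref{Remark_Poincare_duality_in_SUSY}. So I must determine, sector by sector in $\mu_n^2$, which such radial solutions exist.

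I would then split into two cases. For $\mu_n^2 > 0$, Hodge theory on $Z$ lets me take $\phi_n$ to be either $d_Z$-exact or $d_Z^*$-coexact, and the zero-eigenvalue radial problem becomes the coupled ($2\times 2$) system whose solutions have the power/Bessel indicial behaviour at $x=0$ fixed by $\mu_n$ and by the swp weights computed in Section \ref{subsection_Laplacians}; imposing the maximal-domain condition at $x=0$ together with the Dirichlet-type condition at $x=1$ from Remark \ref{Remark_Poincare_duality_in_SUSY} forces $u_{n,k}\equiv 0$, so the positive part of the spectrum of $\Delta_Z$ contributes nothing. For $\mu_n = 0$, where $d_Z\phi_n = d_Z^*\phi_n = 0$, the system decouples and the equation at eigenvalue zero forces $u_{n,k}$ to be a constant multiple of the $x$-independent form $\phi_n$; this form lies in $\mathcal{D}_{\max,B}(d_E)$ and is genuinely harmonic exactly when it is square-integrable on $X$ with respect to the swp volume and the fibre metric on $\Lambda^*(\prescript{a}{}{T^*X})\otimes E$, and it fails the maximal-domain condition otherwise. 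A parallel short computation shows the normal $dx$-component, which a priori could produce a copy of $\mathcal{H}^{k-1}(Z;E)$, is killed by the Dirichlet condition at $x=1$, so no degree shift occurs. Assembling the two cases yields
\begin{equation}
\ker\Delta_0^k \;\cong\; \bigl\{\, s \in \mathcal{H}^k(Z;E)\ :\ \|s\|_{L^2(X;\Lambda^*(\prescript{a}{}{T^*X}) \otimes E)} < \infty \,\bigr\},
\end{equation}
which is the claim.

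The main obstacle I anticipate is the case $\mu_n^2>0$ (and the normal $dx$-component in the $\mu_n=0$ case): one must match, with care, the asymptotics of the radial solutions at $x=0$ against the maximal domain and their behaviour at $x=1$ against the boundary condition, so that precisely the $L^2$ harmonic forms of $Z$ survive and nothing spurious --- from a positive eigenvalue of $\Delta_Z$, or from the normal direction shifting the degree --- slips through. This is exactly the bookkeeping that the SUSY-derived normal forms for the Witten Laplacians in Section \ref{subsection_Laplacians} are designed to control, so the remaining work is to quote those normal forms and run the case analysis rather than to prove anything genuinely new.
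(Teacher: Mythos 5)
Your overall frame matches the paper's: reduce to harmonic representatives via discreteness and the Hodge--Kodaira decomposition, separate variables along the collar, and do a case analysis that leaves only the $x$-independent $L^2$-bounded harmonic forms of $Z$. But the route you take at the decisive step is different, and as written it has a gap. You organize the analysis by the $\Delta_Z$-eigenvalue $\mu_n^2$ and propose to kill the sectors with $\mu_n^2>0$ by solving the \emph{second-order} radial problem at eigenvalue zero and matching ``power/Bessel indicial behaviour'' at $x=0$ against the maximal domain and the condition at $x=1$. For a general swp metric of discrete type the radial solutions are not explicit (they are Bessel-type only in special cases such as conic rescalings), so this asymptotic-matching claim is asserted rather than provable by the means you cite; yet it is exactly where the content of the proposition lies, as you yourself note at the end.

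The paper avoids this by exploiting the stronger \emph{first-order} information: since harmonic forms in the self-adjoint domain are simultaneously closed and co-closed, one writes $s=u_n(x)\,dx\wedge\phi_n+u_t(x)\phi_t$ and reads off first-order ODEs in $x$ together with the relations $d_Z\phi_n=K_2\phi_t$, $\delta_Z\phi_t=K_1\phi_n$ (the superpartner structure on $Z$). The genuinely coupled case (both $u_n,u_t\neq 0$, which is where your $\mu_n^2>0$ sectors live) is then excluded not by asymptotics but by a structural argument: the boundary condition $u_n(1)=0$ together with $u_t'=K_2u_n$ shows such an $s$ is exact, contradicting its being a harmonic representative; the purely normal case dies from $u_n'=0$ and $u_n(1)=0$, and the purely tangential case gives constants times harmonic forms on $Z$, cut down by the $L^2$ condition. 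So your decomposition by $\Delta_Z$-spectrum and your zero-mode analysis are fine, but to close the argument you should replace the indicial-root matching by the closed-and-co-closed reduction and the exactness/orthogonality argument (or some equivalent structural use of SUSY); otherwise the exclusion of the $\mu_n^2>0$ sectors, and of the normal $dx$-component, is not actually established for the general metrics the proposition covers.
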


\begin{proof}[Proof of Proposition \ref{proposition_cohomology_of_de_Rham}]
Since we have discrete spectrum as well as supersymmetry, we see that the kernel of the Laplace-type operator with the self-adjoint domain is isomorphic to the kernel of the corresponding Dirac operator.
Thus to find the harmonic sections we must solve the equations
\begin{equation}
\label{equation_intermediate_1}
    (P+P^*) (u\phi)=0, \quad \sigma(P^*)(dx) (u\phi)|_{x=1}=0, \quad \sigma(P^*)(dx)P (u\phi)|_{x=1}=0
\end{equation}
where $\phi$ is an eigensection (on the form bundle of $Z$) of the operator $D_Z$ with eigenvalue $\mu_j$ for each operator $D_{Z_j}$. However we also know by the Hodge-Kodaira decomposition that these harmonic forms are in the null space of $P$ (also in the null space of $P^*$), which is stronger than the last boundary condition.

Moreover given any element of the null space of $\Delta_X$, we can write it as a sum of normal and tangential forms (with and without $dx$ factors respectively) $s=u_n(x) dx \wedge \phi_n+u_t(x) \phi_t$, where similar to the ansatz in \eqref{equation_ansatz_0} this is also an eigensection of $\Delta_Z$. We will assume that $\phi_n$ has a fixed multi-degree (general harmonic forms are linear combinations of such ones).
Then one can check that the condition that $s$ is closed is equivalent to 
\begin{equation}
\label{equation_1_inter}
    u_t'(x)\phi_t =u_n(x) d_Z \phi_n, \quad u_t(x) d_Z \phi_t=0
\end{equation}
and the co-closed condition is equivalent to 
\begin{equation}
\label{equation_2_inter}
    u_t(x) \sum_j \frac{1}{f_j(x)^2}\delta_{Z_j} \phi_t = \frac{1}{F}(Fu_n)'(x) \phi_n, \quad u_n(x) \sum_j \frac{1}{f_j(x)^2}\delta_{Z_j} \phi_n=0.
\end{equation}
where we denote $F(\phi_n)$ by $F$ for brevity.
Unless one or both of $u_n, u_t$ vanishes identically, we can deduce that $\phi_t$ is $d_Z$ closed and $\phi_n$ is $\delta_Z$ closed, and moreover that $\delta_Z \phi_t=K_1\phi_n$, and $d_Z\phi_n=K_2\phi_t$ (since both $\phi_n$ and $\phi_t$ are independent of $x$) where $K_1,K_2$ are constants. This shows that $\phi_n$ and $\phi_t$ are super partners for the de Rham complex on $Z$, and we can arrange for $K_1=K_2$ to be the eigenvalues of $\sqrt{\Delta_Z}$ on $Z$ (perhaps up to factors appearing in $u_n(x), u_t(x)$) for the eigensections $s$.

The first boundary condition in \eqref{equation_intermediate_1} is equivalent to $u_n(1)=0$, and this together with \eqref{equation_1_inter} shows that $u_t'(1)=0$. Moreover from \eqref{equation_1_inter} we deduce that $u_t'(x)=K_2u_n(x)$.
This shows that $s$ has the structure of a type 2 form given in \eqref{2_type}, and can be expressed as $d(u_n(x)\phi_t)$, which contradicts the fact that it is in the cohomology group, isomorphic to $ker(d)/ im(d)$.
This reduces the possibilities to the cases when one of $u_n$ or $u_t$ vanishes while the other does not (that is, the forms are either normal or tangential).

Now we consider the case when $u_t=0$, and note that the closed and co-closed conditions imply that $\phi_n$ is harmonic, and $u_n'(x)=0$, which together with the first boundary condition (implying $u_n(1)=0$) implies that $u_n=0$.

In the final case where $u_n(x)=0$, the closed and co-closed conditions imply that $\phi_t$ is harmonic, and $u_t'(x)=0$, which corresponds precisely to the sections in the null space of $\Delta_Z$ up to constant factors. Taking into consideration the $L^2$ boundedness yields the result.
\end{proof}

\begin{remark}
In the case of $\varepsilon>0$, the results above can be extended to Witten deformed complexes $\mathcal{P}_{W,\varepsilon}=(L^2(X_{\infty};E), \mathcal{D}_{W}(P_\varepsilon),P_\varepsilon)$ with $L^2$ boundary conditions at $\infty$. These complexes are quasi-isomorphic to the complex $\mathcal{P}_{W,N,\varepsilon}=(L^2(X_{\infty};E), \mathcal{D}_{W,N}(P_\varepsilon),P_\varepsilon)$.

If we replace the radial Morse functions $h$ we consider with $-h$, then the corresponding deformed complexes on the non-compact spaces are quasi-isomorphic to $\mathcal{P}_{W,D,\varepsilon}=(L^2(X_{\infty};E), \mathcal{D}_{W,D}(P_\varepsilon),P_\varepsilon)$ with the new radial Morse function (c.f. Remark \ref{Remark_introductory_Morse}).
\end{remark}

\begin{remark}
\label{Remark_de_Rham_Dolbeault_correspondence}
It is well known that the Dolbeault cohomology in all degrees $p+q=k$ is isomorphic to the de Rham cohomology of degree $k$ on smooth complex manifolds. This continues to be the case for the local and global de Rham cohomology when $d$ is equipped with the minimal domain (and the adjoint with the maximal domain) for spaces with wedge metrics, and we refer the reader to \cite[\S 7]{jayasinghe2024holomorphic} for details. There it was shown that the \textit{rigid} Dolbeault cohomology is isomorphic to the de Rham complex as well, and rigidity was used to extend formulas used by Gibbons-Hawking in the study of gravitational instantons. Those results seem to extend to more singular settings, as verifiable in many explicit examples, motivating the choices of domains near the critical points/fixed points that we study in this article.
We also refer the reader to Example \ref{Example_spinning_sphere_intro} where the correspondence of the $\chi_{-1}$ invariant and the equivariant Euler characteristic for the de Rham complex is shown to hold for the minimal domains, but not when the Dolbeault complex is equipped with the maximal domain.

Serendipitously some important analytic results that we require in this work have been proven in \cite{Jesus_Dunkl_2014,Jesus_Dunkl_2015,Jesus2018Wittens,Jesus2018Wittensgeneral} for the de Rham complex with those domains. 
\end{remark}

This result is a generalization of analogous results in the literature including some where the cohomology groups are not computed for domains with boundary conditions (see \cite{jayasinghe2023l2,cruz2020examples,cheeger1980hodge,Jesus2018Wittensgeneral}) and some for spaces with non-compact ends (\cite{hunsicker2015weighted,HodgetheorygravitinstaRafe2004}).
While the cohomology groups of twisted de Rham complexes are finite dimensional vector spaces by the above proposition (the cohomology of smooth links being finite dimensional), they are infinite in the case of Dolbeault complexes we study, and we need to renormalize/regularize trace formulae. The following result gives important details about the Dolbeault cohomology, and generalizes results in smooth and conic settings.

\begin{proposition}[Cohomology of twisted Dolbeault complexes]
\label{proposition_Dolbeault_cohomology}
Consider a twisted Dolbeault complex $\mathcal{P}_{W,B}(X)$ where $X$ is equipped with an swp metric of finite type that is adapted Hermitian and of Reeb type with Reeb rescaling function $f_0(x)$. Then the cohomology of the Dolbeault complex is a countable set of sections which are of the form $\{u_l(x) \phi_l \}_{l \in \mathbb{Z}}$ where each $\phi_l$ is an eigensection of $\Delta_Z$ with eigenvalue $\nu^2_l$.
Here
\begin{equation}
\label{equation_holomorphic_cohomology}
    u_l(x)=\exp \Big(\int \frac{\nu_l}{f_0(x)} dx \Big), \quad u_l(x)=F(\phi)_l^{-1}\exp \Big(-\int \frac{\nu_l}{f_0(x)} dx \Big)
\end{equation}
for the cases where $B=N,D$ respectively, where $u_l\phi_l$ is integrable on $X$ and is in the domain (meets the condition $W$).
We can arrange the eigenvalues so that $\nu^2_l \rightarrow \infty$ as $l$ goes to $\infty$.
In addition we have the following.
\begin{enumerate}
    \item If there is an Hermitian $S^1$ action on $X$ generated by a vector field $V$ on $Z$, that induces a geometric endomorphism on a twisted Dolbeault complex, then these eigensections $\phi_l$ will have eigenvalues $\upsilon_l$ for the operator $\sqrt{-1}L_V$ where $\upsilon_l \in I$,
    where $I$ is a finite union of countable subsets $\{n+j | n \in \mathbb{Z}\} \subset \mathbb{R}$, where $j$ is real.

    \item If for a Hermitian swp metric which is of Reeb type, where the Reeb rescaling function is bounding from below (from above), the eigensections can be enumerated by $l \in I$ where $I \subset \mathbb{Z}$ which is bounded from below (above).
\end{enumerate}

\end{proposition}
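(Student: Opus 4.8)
The plan is to adapt the proof of Proposition~\ref{proposition_cohomology_of_de_Rham} almost verbatim, with the de Rham operator replaced by the Dolbeault operator $P$ and the warping functions $f_j$ replaced by the single Reeb rescaling function $f_0$. By Theorem~\ref{Theorem_main_spectral} the relevant Hodge Laplacian has discrete spectrum and the complex is supersymmetric, so, as at the start of the proof of Proposition~\ref{proposition_cohomology_of_de_Rham}, the harmonic representatives of the Dolbeault cohomology are the elements of $\ker(P+P^{*})$ in the self-adjoint domain. Applying the separation-of-variables ansatz of Remark~\ref{SUSY_ansatz_key_1} (cf.\ \eqref{equation_ansatz_0}) and using linearity to fix the multidegree, every such section is $s=u(x)\phi$ with $\phi$ an eigensection of $\Delta_Z$ on the compact link; discreteness of the spectrum of $\Delta_Z$ on $Z$ then already shows that the admissible $\phi_l$ form a countable family whose eigenvalues $\nu_l^{2}$ may be ordered so that $\nu_l^{2}\to\infty$, which is the assertion preceding the enumerated list.

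Next I would write $P(u\phi)=0$ and $P^{*}(u\phi)=0$ out explicitly in the adapted Hermitian, Reeb-type coordinates. As in the passage from \eqref{equation_intermediate_1} to \eqref{equation_1_inter}--\eqref{equation_2_inter}, the tangential part of $P$ along the slices is governed entirely by $f_0(x)$, so the system collapses to a closedness/co-closedness condition on $\phi_l$ together with a first-order linear ODE for $u$: namely $u'(x)=\nu_l f_0(x)^{-1}u(x)$ in the $N$ sector and $\big(F(\phi)_l\,u\big)'(x)=-\nu_l f_0(x)^{-1}F(\phi)_l\,u(x)$ in the $D$ sector, where $\nu_l\in\mathbb{R}$ has $\nu_l^{2}$ equal to the $\Delta_Z$-eigenvalue of $\phi_l$ and its sign fixed by the Reeb direction. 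Integrating these gives exactly the two expressions in \eqref{equation_holomorphic_cohomology}. One then imposes the boundary condition at $x=1$ from Remark~\ref{Remark_Serre_duality_in_SUSY} together with the $L^{2}$ condition and the condition $W$ near $x=0$: for each $\nu_l$ at most one of the two exponential branches is integrable against the degenerating wedge measure and lies in $\mathcal{D}_{W,B}(P)$, and — just as a section of type~2 in the sense of \eqref{2_type} was ruled out in the de Rham case because it is exact and, being also harmonic, zero — any section that is simultaneously of normal and of tangential type is here $\bar\partial$-exact, hence not a cohomology class, so the surviving classes are precisely those listed. The genuinely new feature compared with Proposition~\ref{proposition_cohomology_of_de_Rham}, and the reason the cohomology is infinite dimensional here, is that under the finite-type and Reeb-type hypotheses $u_l\phi_l$ lies in $L^{2}$ and in the domain for a whole countable family of $\nu_l\neq0$ rather than only for $\nu_l=0$.

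For item~1, the generator $V$ of the Hermitian $S^{1}$ action induces a symmetry commuting with $P$, $P^{*}$ and $\Delta_Z$, so the $\phi_l$ may be taken to be joint eigensections of $\Delta_Z$ and $\sqrt{-1}L_V$; on the compact link the flow of $V$ is a circle action, so $\sqrt{-1}L_V$ has integer spectrum on functions, and on sections of the $(p,q)$-form bundle twisted by $E$ this spectrum is shifted by the finitely many weights $j_1,\dots,j_m$ of the induced action on $E$ and on the normal and contact bundles occurring in the multidegree decomposition, whence $\upsilon_l\in I=\bigcup_i\{\,n+j_i : n\in\mathbb{Z}\,\}$. For item~2, the Reeb-type hypothesis identifies $\nu_l$ with $\upsilon_l$ up to a fixed shift, so $\nu_l$ ranges over such an integer-shifted set; if $f_0$ is bounded from below (resp.\ from above), the exponent $\int\nu_l f_0(x)^{-1}\,dx$ has, near the relevant end, a definite sign and grows at least linearly in $\nu_l$, so integrability of $u_l\phi_l$ and its membership in $\mathcal{D}_{W,B}(P)$ fail for all but finitely many $\nu_l$ of one sign, leaving an index set $I\subset\mathbb{Z}$ bounded from below (resp.\ from above).

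The step I expect to be the main obstacle is the second one: proving rigorously that the formal solutions \eqref{equation_holomorphic_cohomology} genuinely satisfy the condition $W$ and are square-integrable, and that no other solution of the ODE system contributes. This is exactly where the fine behaviour of the swp metric and of $f_0(x)$ near $x=0$, and the finite-type assumption, must be used; it is also what determines the $N$ versus $D$ dichotomy in \eqref{equation_holomorphic_cohomology} and, in item~2, the direction in which the index set $I$ is bounded.
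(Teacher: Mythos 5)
Your overall strategy matches the paper's: reduce to harmonic sections via SUSY and discreteness, split them into normal and tangential types as in the de Rham case, solve a first-order ODE in $x$ with coefficient $\nu_l/f_0(x)$ to get \eqref{equation_holomorphic_cohomology}, and prove item 1 by decomposing the cohomology into $S^1$-representations with integer eigenvalues shifted by the bundle weights. However, there are concrete gaps. First, you assert rather than derive the key identification of the ODE parameter: in the paper the coefficient $\nu$ is the eigenvalue of $\sqrt{-1}\nabla_V$ (the Reeb derivative), and only after showing that the surviving $\phi_l$ are transversally harmonic (annihilated by $P_2$ and $P_2^*$, hence by $\widehat{\Delta_T}=\Delta_Z-(\sqrt{-1}\nabla_V)^2$) does one get $\Delta_Z\phi_l=\nu_l^2\phi_l$; your phrase ``$\nu_l^2$ equal to the $\Delta_Z$-eigenvalue with its sign fixed by the Reeb direction'' is precisely what needs proof, and it is where the adapted Hermitian/Reeb-type hypotheses do their work. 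Second, your selection mechanism for the $N$ versus $D$ branches is wrong as stated: it is not true that ``for each $\nu_l$ at most one of the two exponential branches is integrable'' (the paper itself notes, in the proof of Proposition \ref{proposition_class_of_metrics_bounding_below}, that finitely many eigenvalues of the disfavoured sign can still give $L^2$ solutions). In the paper the normal solutions are killed for $B=N$ by the boundary condition at $x=1$ (a nonvanishing exponential cannot satisfy $u_n(1)=0$), and the $B=D$ formula is then obtained by the Hodge star intertwining the $N$ and $D$ conditions of the Serre dual complexes, not by an integrability dichotomy.

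Third, your treatment of item 2 rests on a misreading of the hypothesis: ``the Reeb rescaling function is bounding from below'' is a defined property (Definition \ref{Definition_Reeb_type_notions}), tailored to the structure of the harmonic sections so that item 2 follows essentially by definition; it does not mean that $f_0$ is bounded from below as a function, and boundedness of $f_0$ alone would not give your claim that the exponent has a definite sign and grows in $\nu_l$ (that depends on the behaviour of $\int 1/f_0$ near $x=0$, which is exactly what Proposition \ref{proposition_class_of_metrics_bounding_below} checks for $f_0=x^\alpha$). You also conflate items 1 and 2 by identifying $\nu_l$ with $\upsilon_l$, although no $S^1$ action is assumed in item 2. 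These points need to be repaired for the argument to go through.
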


\begin{proof}[Proof of Proposition \ref{proposition_Dolbeault_cohomology}]
Similar to the proof of Proposition \ref{proposition_cohomology_of_de_Rham}, we see that the harmonic sections are solutions of
\begin{equation}
\label{equation_intermediate_Dolbeault}
    (P+P^*) (u\phi)=0, \quad \sigma(P^*)(dx) (u\phi)|_{x=1}=0, \quad \sigma(P^*)(dx)P (u\phi)|_{x=1}=0
\end{equation}
where $\phi$ is an eigensection (on the form bundle of $Z$) of the operator $D_Z$ with eigenvalue $\mu^2_j$ for each operator $\Delta_{Z_j}$, and by the Hodge-Kodaira decomposition that these harmonic forms are in the null space of $P$ and $P^*$.

A version of the arguments in the proof of Proposition \ref{proposition_cohomology_of_de_Rham} adapted to the Dolbeault case (through the correspondence of the techniques used in Subsections \ref{subsection_eigensection_ansatz_Hodge} and \ref{subsection_eigensection_ansatz_Dolbeault}) can be used to show that the elements in the null space are either tangential (in the null space of $\iota_{\beta^\#}$) or normal (in the null space of $\beta \wedge$), where $\beta=dx - \sqrt{-1}J(dx)$ as introduced in Subsection \ref{subsection_eigensection_ansatz_Dolbeault}, and we will use the operators $P_1, P_2$ defined there.

Now we treat the case of normal forms which can be written as $\beta \wedge u_n \phi$. Since these must be $P_1$ co-exact, we get the equation
\begin{equation}
\label{equation_refering_Yasuke_29}
    -\frac{1}{F(\phi)}(u_2 F(\phi))'(x)\phi-\frac{1}{f_0(x)}\sqrt{-1} \nabla_V \phi u_n(x)=0.
\end{equation}
where we use equation \eqref{equation_definition_delta_1}.
Since the operators $\nabla_V, P_2$ commute, and $d_Z=P_2+V^{\flat} \wedge \nabla_V$ also commutes with $\nabla_V^2$, we see that the eigensections $\phi$ of $\Delta_Z$ are also eigensections of $\sqrt{-1} \nabla_V$ with eigenvalue $\nu$. Then the solutions of the differential equation \eqref{equation_refering_Yasuke_29} are given by 
\begin{equation}
\label{equation_normal_cohomology_Dolbeault}
    F(\phi)u_n(x)=\exp \Big(\int -\frac{\nu}{f_0(x)} dx \Big)
\end{equation}
up to multiplicative constants. Since $f_0(x)$ is positive for $x>0$, the first boundary condition in \eqref{equation_intermediate_Dolbeault} implies that $u_n(x)$ must vanish.  

Now we treat the case of tangential forms which can be written as $u_t \phi$. Since these must be $P_1$ exact, we get the equation 
\begin{equation}
    u_t'(x)\phi-\frac{1}{f_0(x)}\sqrt{-1} \nabla_V \phi u_t(x)=0,
\end{equation}
the solutions of which are
\begin{equation}
    u_t(x)=\exp \Big(\int +\frac{\nu}{f_0(x)} dx \Big)
\end{equation}
which solve the boundary conditions as well. In addition, the section $\phi$ must be closed under both $P_2$ and $P_2^*$, equivalently in the null space of the transversal Laplacian $\Delta_T=P_2P^*_2+P_2^*P_2$ introduced in Subsection \ref{subsection_eigensection_ansatz_Dolbeault}. This is equivalent to the section being in the null space of $\widehat{\Delta_T}:=\Delta_Z -(\sqrt{-1} \nabla_V)^2$, and we get $\Delta_Z \phi =-\nabla_V^2 \phi=\nu^2 \phi$.
Since the eigenvalues of $\Delta_Z$ grow according to Weyl's law, we see that the eigenvalues $\nu^2_l$ specified in the theorem statement go to $\infty$ as $l$ goes to $\infty$.

We observe that the Hodge star operator intertwines the boundary conditions $N$ and $D$ for the Serre dual complexes, and the operators as well (see Remark \ref{Remark_SUSY_Hodge_star_rescalings}).

\noindent {\textbf{Proof of 1:}}
The first numbered item can be proven as follows. We see that for the case of the trivial bundle, the Dolbeault complex and in particular the cohomology can be expressed as a direct sum of irreducible representations using the Peter-Weyl theorem, where the eigenvalues of $\sqrt{-1}L_V$ are integers (the Pontryagin dual of $S^1$ is $\mathbb{Z}$).
Twisting by Hermitian line bundles, we may get additional contributions corresponding to the infinitesimal action on the section generating the bundle, which shifts the eigenvalues by some constant. 
The result follows for general Hermitian bundles $E$ when $V$ lifts to $E$ and generates a geometric endomorphism.

\noindent {\textbf{Proof of 2:}}
This follows in a straighforward manner from Definition \ref{Definition_Reeb_type_notions}, which was motivated by the structure of the harmonic sections that were presented in this section.

\end{proof}

Given explicit forms of the Reeb rescaling functions, we can better understand the Dolbeault cohomology. For the case where $f_0(x)=x$, this was achieved in \cite{jayasinghe2023l2} and we can match it with the smooth case.

\begin{example}[Dolbeault cohomology on a disc]
\label{example_Dolbeault_cohomology_disc}
The Dolbeault cohomology on a smooth disc with del-bar Neumann conditions corresponds to the holomorphic functions, as can be checked on the disc in $\mathbb{C}$ using the fact that $\phi_l=e^{i\theta \nu_l}$ where $\nu_l \in \mathbb{Z}$ and $f_0(x)=x$, and using the integrability of the resulting functions $u_l(x)=x^{\nu_l}$ for $\nu_l \geq 0$. This is the basis $\{z^l=(xe^{i\theta})^l\}_{l \geq 0}$ for the holomorphic functions.

For the adjoint boundary conditions ($B=D$), for $\phi_l=e^{i\theta \nu_l} dx-\sqrt{-1}xd\theta$, we see that we get $u_l(x)=x^{-\nu_l}$ for $\nu_l \leq 0$, Which corresponds to the forms $\{\overline{z}^{l}d\overline{z}\}_{l \geq 0}$, where $z=xe^{i\theta}$.
\end{example}

\begin{remark}
\label{Remark_Dolbeault_cohom_Serre_duality_contravallation}
In degree $p=0,q=0$ of the Dolbeault complex without twists, the harmonic representatives of the cohomology for $B=N$ are simply the meromorphic functions that are integrable and in the domain, which for the minimal domain corresponds to the holomorphic functions. We refer to Subsection \ref{subsection_Illustrative_examples} for examples where the eigenvalues of $\sqrt{-1}L_V$ are shifted integers.
\end{remark}

We note that general bundles admit local cohomology groups which are not locally free modules over the cohomology of the untwisted complex, and refer the reader to the case of the Dolbeault complex with Hodge degree $p=1$ for the example in \cite[\S 7.3.3.]{jayasinghe2023l2}.

\begin{proposition}
\label{proposition_class_of_metrics_bounding_below}
Given a swp metric of discrete type which is Hermitian, of Reeb type and has Reeb rescaling function $f_0(x)=x^{\alpha}$ for $\alpha \geq 1$, then it is bounding from below.
\end{proposition}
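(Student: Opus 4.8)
The plan is to read the claim off the explicit form of the Dolbeault harmonic representatives obtained in Proposition~\ref{proposition_Dolbeault_cohomology}, and to verify that for $f_0(x)=x^{\alpha}$ with $\alpha\geq 1$ the integrability requirement at $x=0$ already cuts the a priori two-sided family of admissible sections down to a family indexed by a set bounded below; by Definition~\ref{Definition_Reeb_type_notions}, together with item~2 of Proposition~\ref{proposition_Dolbeault_cohomology}, this is exactly the statement that $f_0$ (hence the metric) is bounding from below.

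First I would recall from the proof of Proposition~\ref{proposition_Dolbeault_cohomology} that, once the normal forms $\beta\wedge u_n\phi$ are discarded (the boundary condition at $x=1$ forces $u_n$ to vanish), the harmonic representatives for $B=N$ are the tangential sections $u_t(x)\phi$ with $u_t(x)=\exp\!\big(\int \nu/f_0(x)\,dx\big)$, where $\nu$ is the eigenvalue of $\sqrt{-1}\nabla_V$ on $\phi$ and $\Delta_Z\phi=\nu^2\phi$; the case $B=D$ is the Serre dual, with a sign flip and the factor $F(\phi)^{-1}$, and is handled symmetrically via the Hodge-star intertwining of $N$ and $D$ recorded in that proof. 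The only remaining constraints are integrability of $u_t(x)\phi$ on $X$ near $x=0$ against the swp volume form, and membership in the domain (the condition $W$), the latter only further shrinking the admissible set of $\nu$.

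Next I would substitute $f_0(x)=x^{\alpha}$ and split into two cases. For $\alpha=1$ one gets $u_t(x)=x^{\nu}$ up to a multiplicative constant; since the swp volume form near $x=0$ carries a fixed power $x^{m}$ (coming from $f_0$ itself, the remaining warping functions on the link, and the degree-dependent bundle weight $F(\phi)$), integrability of $u_t(x)\phi$ near $x=0$ holds precisely when $\nu$ exceeds a finite threshold determined by $m$, so the admissible $\nu$ are bounded below — this is the condition $\nu_l\geq 0$ seen in Example~\ref{example_Dolbeault_cohomology_disc} for the flat disc. For $\alpha>1$ one has $u_t(x)=\exp\!\big(\tfrac{\nu}{1-\alpha}\,x^{1-\alpha}\big)$ with $1-\alpha<0$, so as $x\to 0^{+}$ the exponent tends to $+\infty$ when $\nu<0$ and to $-\infty$ when $\nu\geq 0$; in the first case $u_t$ grows faster than any power of $x$ and defeats integrability whatever $m$ is, while in the second $u_t$ is bounded (indeed it decays superpolynomially), so the admissible $\nu$ are exactly those with $\nu\geq 0$ and are again bounded below. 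Either way, using that $\Delta_Z$ has finite-dimensional eigenspaces (so only finitely many $\phi$ occur for each $\nu$, cf. item~1 of Proposition~\ref{proposition_Dolbeault_cohomology}), the harmonic representatives are enumerated by an index set $I$ that is bounded below, which is the assertion.

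The step I expect to be the main obstacle is the bookkeeping in the case $\alpha=1$: one must identify the exact power $m$ in the swp volume form near $x=0$ — including the contributions of $f_0$, of the remaining warping functions, and of the degree-dependent bundle factor $F(\phi)$ — to be sure that a finite lower threshold for $\nu$ genuinely exists, and one must check that the domain condition $W$ cannot undo that bound. Since only the existence of a lower bound is needed, not its precise value, these estimates can be carried out crudely; the case $\alpha>1$ is essentially immediate, because superpolynomial growth at $x=0$ overwhelms every polynomial factor.
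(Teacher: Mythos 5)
Your proposal is correct and follows essentially the same route as the paper: plug $f_0(x)=x^{\alpha}$ into the explicit harmonic representatives from Proposition \ref{proposition_Dolbeault_cohomology}, split into $\alpha>1$ and $\alpha=1$, and use $L^2$-integrability at $x=0$ together with the Weyl-law growth of $\nu_l^2$ to get a lower bound on admissible $\nu_l$ for $B=N$ (upper bound for $B=D$ by the dual/Serre argument). The only slight difference is that for $\alpha>1$ you assert the admissible set is exactly $\nu\geq 0$, whereas the paper more cautiously allows finitely many small negative eigenvalues; since only boundedness from below is needed, this does not affect the conclusion.
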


\begin{proof}
When $\alpha>1$, for $B=N$ we get
\begin{equation}
    u_l(x)=\exp \Big(  \frac{\nu_l}{1-\alpha} x^{1-\alpha} \Big)
\end{equation}
which satsifies a growth condition (determined by the choice of $W$) at $x=0$ and is $L^2$ bounded for large values of $\nu_l^2$ only when $\nu_l \geq 0$ (there maybe some negative eigenvalues with small magnitude for which the corresponding functions are $L^2$ bounded), and thus we have a lower bound on admissible values of $\nu_l$ for the cohomology. We know that $\nu_l^2$ grows by Weyl's law for eigenvalues of $\Delta_Z$ which is an elliptic operator on $Z$.
A similar argument shows that for $B=D$ (see \eqref{equation_holomorphic_cohomology}) there is an upper bound on admissible values of $\nu_l$ for the cohomology.
We observe that in the case where $f_0(x)=x$ ($\alpha=1$), $u_l(x)=x^{ \pm \nu_l}$ where only finitely many such functions with $\pm \nu_l <0$ can appear due to the growth of $\nu^2_l$ and the $L^2$ bounded condition for cohomology.
The case of the smooth disc that we studied in Example \ref{example_Dolbeault_cohomology_disc} illustrates this. The general case is similar, and in the conic case was studied in \cite{jayasinghe2023l2}.
\end{proof}

\subsection{Geometric endomorphisms, supertraces and cancellations}
\label{Subsection_supertraces_cancellations}

\begin{definition}
\label{geometric_endo}
An \textbf{\textit{endomorphism}} $T$  \textbf{\textit{of a Hilbert complex $\mathcal{P}=(H_k, \mathcal{D}(P_k), P_k)$}} is given by an n-tuple of maps $T=(T_0, T_1,...,T_n)$, where $T_k:H_k \rightarrow H_k$ are bounded maps of Hilbert spaces, that satisfy the following properties.
\begin{enumerate}
    \item $T_k(\mathcal{D}(P_k)) \subseteq \mathcal{D}(P_k)$
    \item $P_k \circ T_k= T_{k+1} \circ P_k$ on $\mathcal{D}(P_k)$
\end{enumerate}
Each endomorphism $T_k$ has an adjoint $T_k^*$. If each $T_k^*$ preserves the domain $\mathcal{D}(P_k^*)$, then we call $T^*=(T^*_0, T^*_1,...,T^*_n)$ the \textit{\textbf{adjoint endomorphism}} of the dual complex.
\end{definition}

If the endomorphism is associated to a self-map $f:X \rightarrow X$, we say that it is a \textbf{geometric endomorphism}. One can check that the pullback by a holomorphic isometry of $X$ gives geometric endomorphisms on all the complexes we study in this article. When the self maps are not isometries, more intricate constructions of geometric endomorphisms in necessary to get the correct formulas and we refer to \cite[\S 4]{jayasinghe2023l2} (c.f. \cite{AtiyahBott1,Bei_2012_L2atiyahbottlefschetz}) for more details. In particular we note that even algebraic versions given by \cite{baum1979lefschetz} which are proven for finite actions only extend to the non-isometric case when geometric endomorphisms are considered, as can be checked in Examples 7.34 and Example 7.38 of \cite{jayasinghe2023l2}.
    
We define \textbf{polynomial Lefschetz heat supertrace functions}
\begin{equation}
    \mathcal{L}(\mathcal{P},T,t,s)(b)
    :=\sum_{q=0}^n b^q Tr \Big(T \circ e^{-t\Delta}e^{-s(\Delta+\sqrt{\Delta_Z})} \Big)|_{\mathcal{P}}.
\end{equation}
and \textbf{polynomial Lefschetz supertrace functions}
\begin{equation}
    L(\mathcal{P},T,s)(b)
    :=\sum_{q=0}^n b^q Tr \Big(T\circ e^{-s\sqrt{\Delta_Z}} \Big)|_{\mathcal{H}^q(\mathcal{P})}.
\end{equation}
analogous to \cite{jayasinghe2023l2} in the wedge setting.
The following is a version of the renormalized McKean Singer theorem given in Theorem 5.41 of \cite{jayasinghe2023l2}.
A simpler version of this result is given in Theorem 3.21 \cite{jayasinghe2023l2}, suitable for Fredholm complexes (such as twisted de Rham ones we study here). 
Given a $S^1_{\theta}$ group action, we will denote the induced geometric endomorphism as $T_{\theta}$. We will continue to use this notation even when the action extends to a $\mathbb{C}^*$ action.

\begin{theorem}
\label{Theorem_Morse_supertrace}
Let $\mathcal{P}=(H,\mathcal{D}(P),P)$ be a Witten deformed twisted de Rham/Dolbeault complex (of degree $m$) for some $\varepsilon \geq 0$ described in Theorem \ref{Theorem_main_spectral}. 
Let $T=T_{\theta}$ be an endomorphism of the complex corresponding to a $\mathbb{C}^*$ action. For all $t,s \in \mathbb{R}^+$, we can expand the Lefschetz heat supertrace functions as
\begin{equation}
\label{equation_with_the_b}
    \mathcal{L}(\mathcal{P},T,t,s)(b)
    =L(\mathcal{P},T,t,s)(b)+ (1+b) \sum_{k=0}^{n-1} b^k S_k(t,s)
\end{equation}
where
\begin{equation} 
\label{equation_residuals_101}
    S_k(t)=\sum_{{n,k}} e^{-(t+s) \lambda^2_{n,k} - s \mu_{n}}  \langle T \psi_{n,k}, \psi_{n,k} \rangle
\end{equation}
where $\{\psi_{n,k}\}$ are an orthonormal basis of $P$ \textbf{co-exact} eigensections as in Theorem \ref{Theorem_main_spectral}. In particular 
\begin{equation}
\label{equation_Namal_Hora}
    \mathcal{L}(\mathcal{P},T,t,s)(-1)= L(\mathcal{P},T,s)(-1),
\end{equation}
and in the case of a twisted de Rham complex, this expression can be evaluated for any value of $s \in \mathbb{C}$.
For a twisted Dolbeault complex if the geometric endomorphism $T_{\theta}$ is norm preserving, then the expression in \eqref{equation_Namal_Hora} is holomorphic in the variable $s$ for $\text{Re}(s)>0$.
In particular, $\mathcal{L}(\mathcal{P},T,t,s)(-1)$ is independent of $t$.
\end{theorem}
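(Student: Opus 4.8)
The plan is to prove this by a standard supersymmetric cancellation argument: pair co-exact eigensections with their images under $P$ (the super partners), observe that these pairs contribute with opposite signs when $b=-1$, and conclude that the only surviving terms in the Lefschetz heat supertrace are those coming from harmonic sections. First I would use Theorem \ref{Theorem_main_spectral} to write out $\mathcal{L}(\mathcal{P},T,t,s)(b)$ as a sum over the orthonormal eigensection basis, splitting the basis in each degree $q$ into harmonic sections, $P$-co-exact eigensections with positive eigenvalue $\lambda_{n,k}^2$, and $P$-exact eigensections. By item 3 of Theorem \ref{Theorem_main_spectral}, each co-exact eigensection $\psi_{n,k}$ in degree $q$ with $\lambda_{n,k}>0$ determines an exact eigensection $\lambda_{n,k}^{-1}P_\varepsilon\psi_{n,k}$ in degree $q+1$ with the same eigenvalues for $\Delta_\varepsilon$ and $\Delta_Z$; the Hodge–Kodaira decomposition (available because the spectrum is discrete and SUSY holds, as used in Proposition \ref{proposition_cohomology_of_de_Rham}) guarantees this is a bijection between the co-exact part in degree $q$ and the exact part in degree $q+1$. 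This gives the decomposition \eqref{equation_with_the_b} with $S_k(t,s)$ as in \eqref{equation_residuals_101}, once one checks that $\langle T\,(\lambda^{-1}P_\varepsilon\psi_{n,k}),\,\lambda^{-1}P_\varepsilon\psi_{n,k}\rangle = \langle T\psi_{n,k},\psi_{n,k}\rangle$; this last identity follows from the intertwining property $P\circ T_k = T_{k+1}\circ P$ in Definition \ref{geometric_endo} together with the fact that $P_\varepsilon^*P_\varepsilon\psi_{n,k} = \lambda_{n,k}^2\psi_{n,k}$ on co-exact eigensections.

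Setting $b=-1$ in \eqref{equation_with_the_b} kills the $(1+b)$ factor, yielding $\mathcal{L}(\mathcal{P},T,t,s)(-1) = L(\mathcal{P},T,t,s)(-1)$; but on harmonic sections $\Delta_\varepsilon$ acts as zero and $\Delta_Z$ acts as $\mu_n^2$, so $L(\mathcal{P},T,t,s)(-1)$ reduces to $\sum_q (-1)^q \mathrm{Tr}(T\circ e^{-s\sqrt{\Delta_Z}})|_{\mathcal{H}^q(\mathcal{P})} = L(\mathcal{P},T,s)(-1)$, which depends on neither $t$ nor on the heat factor $e^{-t\Delta}$. This gives \eqref{equation_Namal_Hora} and the $t$-independence. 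For the de Rham case the cohomology is finite-dimensional by Proposition \ref{proposition_cohomology_of_de_Rham}, so $L(\mathcal{P},T,s)(-1)$ is a finite sum of terms $e^{-s\mu_n}\langle T\phi_n,\phi_n\rangle$ and hence entire in $s\in\mathbb{C}$. For the Dolbeault case the cohomology is only countably infinite, described in Proposition \ref{proposition_Dolbeault_cohomology} with eigenvalues $\nu_l^2\to\infty$; when $T_\theta$ is norm preserving we have $|\langle T_\theta\phi_l,\phi_l\rangle|\le \|\phi_l\|^2 = 1$, so the series $\sum_l e^{-s\sqrt{\nu_l^2}}\langle T_\theta\phi_l,\phi_l\rangle$ is dominated by $\sum_l e^{-\mathrm{Re}(s)\,\nu_l}$, which converges absolutely and locally uniformly for $\mathrm{Re}(s)>0$ by the Weyl growth of the $\nu_l$, hence defines a holomorphic function there.

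The main obstacle I anticipate is the bookkeeping at the level of \emph{domains}: the co-exact/exact splitting and the bijection in item 3 must be checked to respect the self-adjoint domains $\mathcal{D}(P_\varepsilon)$, $\mathcal{D}(P_\varepsilon^*)$ (this is why the statement insists on the "discrete type" hypotheses and invokes the SUSY construction of the basis from Subsection \ref{subsection_Laplacians}), and the geometric endomorphism $T$ must be verified to preserve these domains and to genuinely intertwine $P_\varepsilon$ — for non-isometric self-maps this requires the refined constructions referenced from \cite[\S 4]{jayasinghe2023l2}, though for the $\mathbb{C}^*$-action case considered here one reduces to the induced $S^1_\theta$-action $T_\theta$ which acts by isometries on the link and the intertwining is automatic. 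A secondary subtlety is justifying the term-by-term rearrangement of the (absolutely convergent, for $t,s>0$) trace into the harmonic part plus the paired residual sums; this is routine given the discreteness of the spectrum and the trace-class property of $T\circ e^{-t\Delta}e^{-s(\Delta+\sqrt{\Delta_Z})}$, which follows from the Weyl law for $\lambda_{n,k}^2+\mu_n^2$ established via item 2 of Theorem \ref{Theorem_main_spectral}.
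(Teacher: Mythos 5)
Your proposal is correct and follows essentially the same route as the paper: the Hodge--Kodaira splitting into harmonic, $P$-co-exact and $P$-exact eigensections, the supersymmetric pairing identity $\lambda_{n,k}^{-2}\langle T P\psi_{n,k}, P\psi_{n,k}\rangle=\langle T\psi_{n,k},\psi_{n,k}\rangle$ giving the $(1+b)$ factorization, and then finite-dimensionality of the de Rham cohomology versus the norm-preserving bound plus Weyl growth of the $\nu_l$ for the Dolbeault holomorphicity on $\mathrm{Re}(s)>0$. The domain/intertwining and trace-class caveats you raise are exactly the points the paper delegates to Theorem \ref{Theorem_main_spectral}, Lemma \ref{Lemma_super_symmetry} and Proposition \ref{proposition_Dolbeault_cohomology}.
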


\begin{proof}
We can expand the polynomial Lefschetz heat supertrace function using the Hodge-Kodaira decomposition and the orthonormal basis of eigenvectors given in Theorem \eqref{Theorem_main_spectral} to get 
\begin{multline}
\label{equation_Mahinda_Hora}
\sum_{q=0}^m b^q Tr(T e^{-t\Delta}e^{-s(\Delta+\sqrt{\Delta_Z})}) =  \sum_{q=0}^m b^k  tr(e^{-s \sqrt{\Delta_Z}}T|_{\mathcal{H}^k(\mathcal{P})})\\ 
+\sum_{q=0}^{m-1} b^q \sum_{n,k} e^{-(t+s) \lambda^2_{n,k} -s\mu_n}  \langle T \psi_{n,k}, \psi_{n,k} \rangle  
+\sum_{q=0}^{m-1}  b^{q+1} \sum_{n,k} e^{-(t+s) \lambda^2_{n,k} -s\mu_n}  \langle T P \frac{\psi_{n,k}}{\lambda_{n,k}}, P \frac{\psi_{n,k}}{\lambda_{n,k}} \rangle
\end{multline}
where we have used the fact that there are no exact elements in degree 0, and that there are no co-exact elements in degree $m$. To show that we have a $1+b$ factor and the expression for $S_k$, we use the supersymmetry given by Lemma \ref{Lemma_super_symmetry} as follows:
\begin{multline}
\label{argument_Carrickfergus}
\frac{1}{\lambda^2_{n,k}}\langle T P_q \psi_{n,k}, P_q \psi_{n,k} \rangle = \frac{1}{\lambda^2_{n,k}}\langle P_q T \psi_{n,k}, P_q \psi_{n,k} \rangle=\frac{1}{\lambda^2_{n,k}}\langle T \psi_{n,k}, P_{q}^* P_q \psi_{n,k} \rangle\\
=\langle T \psi_{n,k}, \frac{1}{\lambda^2_{n,k}} \Delta_q \psi_{n,k} \rangle=\langle T \psi_{n,k}, \psi_{n,k} \rangle.
\end{multline}
It is now easy to see that we have the expression for $S_k(t,s)$ in \eqref{equation_residuals_101}, and that equation \eqref{equation_with_the_b} holds for $t>0, s>0$. It is clear that for $b=-1$, this yields \eqref{equation_Namal_Hora} where the right hand side is independent of $t$.

Since the twisted de Rham cohomology is finite dimensional, we can evaluate \eqref{equation_Namal_Hora} for any $s$.
The results for the Dolbeault cohomology in Proposition \ref{proposition_Dolbeault_cohomology} shows that 
\begin{equation}
    L(\mathcal{P},T,s)(-1)=\sum_{q=0}^m (-1)^q  tr(e^{-s \sqrt{\Delta_Z}}T|_{\mathcal{H}^q(\mathcal{P})})
\end{equation}
where the expressions in the sum for each degree $q$ (of the complex) is of the form
\begin{equation}
    \sum_{l \in \mathbb{Z}} e^{-s|\nu_l|} \langle T u_l\phi_l, u_l\phi_l \rangle_{L^2}
\end{equation}
where only the forms of said degree are taken in the sum. Using the norm preserving condition and the growth of eigenvalues $\nu_l$ given in Proposition \ref{proposition_Dolbeault_cohomology}, it is easy to see that this sum converges for $\text{Re}(s)>0$.
\end{proof}

\begin{remark}
In the conic (wedge) case, it was shown in \cite{jayasinghe2023l2} that the Lefschetz supertrace functions are holomorphic on a half plane that includes $s=0$ for attracting self maps (with $B=N$), and for expanding self maps (with $B=D$), where the explicit form of the functions $u_l$ in the wedge setting were used in the proof.
\end{remark}

\begin{corollary}
\label{Corollary_Morse_supertraces}
 Given a swp metric of discrete type on $X$ that is adapted K\"ahler and is of Reeb type, 
where there is a K\"ahler Hamiltonian Morse function, the polynomial Morse supertrace function is a finite linear combination 
\begin{equation}
\label{equation_Laurent_series_combinations_Morse}
    M(\mathcal{P}_{W,B},T_\theta,s)(b)= \sum_{q=0}^n b^q \sum_{k \in I'} C_{k,\theta} u_{k, s,\theta}
\end{equation}
where $u_{k, s,\theta} \in \mathbb{Z}_{\geq 0}[\lambda,\lambda^{-1}]$, where $\lambda=e^{-s+i\theta}$, $I'$ is a finite indexing set and where each $C_{k,\theta}$ is a real power of $\lambda$.
Additionally we have the following.
\begin{enumerate}
\item If both of the analytic functions (in the variable $s$), $M(\mathcal{P}_{W,B},T_\theta,s)(b)$ and $L(\mathcal{P}_{W,B},T_\theta,s)$ are regular at $s=0$, then
\begin{equation}
    M(\mathcal{P}_{W,B},T_\theta,0)(-1)=L(\mathcal{P}_{W,B},T_\theta,0).
\end{equation}
\item If the Reeb rescaling function $f_0(x)$ is bounding from below (respectively above), 
then $M(\mathcal{P}_{W,N},T_\theta,s)(b)$ converges for $s>0$ (respectively $s<0$), and $M(\mathcal{P}_{W,D},T_\theta,s)(b)$ converges for $s<0$ (respectively $s>0$).
\end{enumerate}
\end{corollary}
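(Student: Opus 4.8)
The plan is to derive Corollary \ref{Corollary_Morse_supertraces} as a specialization of Theorem \ref{Theorem_Morse_supertrace} together with the explicit description of Dolbeault cohomology obtained in Proposition \ref{proposition_Dolbeault_cohomology} and the class of metrics analyzed in Proposition \ref{proposition_class_of_metrics_bounding_below}. First I would recall that, under the stated hypotheses (swp metric of discrete type, adapted K\"ahler, of Reeb type, with a K\"ahler Hamiltonian Morse function), Theorem \ref{Theorem_Morse_supertrace} applies to the Witten deformed complex and gives $\mathcal{L}(\mathcal{P}_{W,B},T_\theta,t,s)(-1)=L(\mathcal{P}_{W,B},T_\theta,s)(-1)$, independent of $t$. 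The polynomial Morse supertrace function $M(\mathcal{P}_{W,B},T_\theta,s)(b)$ is, by definition, the version of the Lefschetz supertrace function built from the harmonic representatives of the deformed complex localized near the critical points; so the content of the first assertion is to expand each contribution $\mathrm{tr}(e^{-s\sqrt{\Delta_Z}}T_\theta|_{\mathcal{H}^q(\mathcal{P}_{W,B})})$ in each degree $q$ using the basis $\{u_l(x)\phi_l\}$ from Proposition \ref{proposition_Dolbeault_cohomology}.

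Next I would carry out that expansion. By Proposition \ref{proposition_Dolbeault_cohomology}, each harmonic section is of the form $u_l(x)\phi_l$ with $\phi_l$ an eigensection of $\Delta_Z$ with eigenvalue $\nu_l^2$ and, by item 1 of that proposition, an eigensection of $\sqrt{-1}L_V$ with eigenvalue $\upsilon_l$ lying in a finite union of sets of the form $\{n+j : n\in\mathbb{Z}\}$. Applying the geometric endomorphism $T_\theta$ induced by the $S^1_\theta$ action multiplies $\phi_l$ by $e^{i\theta\upsilon_l}$, and since the metric is of Reeb type and adapted K\"ahler, the radial factor $u_l(x)$ is preserved (the $S^1$ action is along $Z$), so $\langle T_\theta u_l\phi_l, u_l\phi_l\rangle = e^{i\theta\upsilon_l}\|u_l\phi_l\|^2$. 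The factor $e^{-s\sqrt{\Delta_Z}}$ contributes $e^{-s|\nu_l|}$. Because the metric is of Reeb type with Reeb rescaling function $f_0(x)$ and we are in the discrete-type setting, $|\nu_l|$ is (up to normalization) a non-negative integer multiple determined by the Reeb circle, so each term is of the form $C_{k,\theta}\lambda^{m}$ with $\lambda=e^{-s+i\theta}$ and $m\in\mathbb{Z}_{\ge0}$ (or $m\le 0$ for the dual side), after collecting finitely many orbits of eigensections sharing the same $|\nu_l|$ and grouping them by the residue $j$, which yields the real prefactors $C_{k,\theta}$ that are powers of $\lambda$; the finite index set $I'$ comes from the finitely many residues $j$ together with the finitely many degrees. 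This gives \eqref{equation_Laurent_series_combinations_Morse}. For item 1, I would invoke \eqref{equation_Namal_Hora}: since $M(\mathcal{P}_{W,B},T_\theta,s)(-1)$ equals the Lefschetz supertrace $\mathcal{L}$ evaluated at $b=-1$ on the deformed complex, which by the theorem equals $L(\mathcal{P}_{W,B},T_\theta,s)$, the two analytic functions agree wherever both are defined; if both are regular at $s=0$, the identity theorem (analytic continuation) forces equality of the values at $s=0$. For item 2, I would apply Proposition \ref{proposition_class_of_metrics_bounding_below} and the definition of "bounding from below/above": when $f_0(x)$ bounds from below, the $L^2$-admissible cohomology eigensections for $B=N$ have $\nu_l\ge 0$ up to finitely many exceptions (and for $B=D$ have $\nu_l\le 0$), so the series $\sum_l e^{-s|\nu_l|}C_{l,\theta}$ is a one-sided series in $\lambda$ that converges for $s>0$ (resp. the reversed inequality), with the dual roles of $N$ and $D$ coming from the Hodge star intertwining noted in the proof of Proposition \ref{proposition_Dolbeault_cohomology}.

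The main obstacle I anticipate is the bookkeeping that turns the abstract statement "the supertrace restricted to cohomology is a finite linear combination of terms $C_{k,\theta}u_{k,s,\theta}$ with $u\in\mathbb{Z}_{\ge0}[\lambda,\lambda^{-1}]$ and $C_{k,\theta}$ a real power of $\lambda$" into something fully rigorous: one must verify that the eigenvalues $|\nu_l|$ really are integer-spaced once normalized by the Reeb circle (this uses the Reeb type and discrete type hypotheses in an essential way), that within each residue class $j$ and each degree the multiplicities (weighted by $T_\theta$) organize into an honest polynomial in $\lambda$ with non-negative integer coefficients rather than a genuinely infinite sum — which is exactly where the $L^2$-boundedness and Weyl-law growth from Proposition \ref{proposition_Dolbeault_cohomology} cut the series down to finite length in each degree for the deformed (Morse-localized) complex — and that the fractional shift $j$ is absorbed correctly into $C_{k,\theta}$. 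The convergence statement in item 2 is comparatively routine once the one-sidedness of the spectrum is in hand, being a geometric-series estimate; and item 1 is essentially a citation of \eqref{equation_Namal_Hora} combined with uniqueness of analytic continuation, so the real work is concentrated in the structural first assertion.
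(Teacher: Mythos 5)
Your structural skeleton does match the paper's argument (decompose the cohomology under the circle action as in Proposition \ref{proposition_Dolbeault_cohomology}, integer eigenvalues of $\sqrt{-1}L_V$ for the trivial bundle, twists shifting the eigenvalues and being absorbed into the prefactors $C_{k,\theta}$, item 1 via evaluability at $s=0$, item 2 via one-sidedness of the spectrum), but two of your steps are genuinely off. First, you renormalize with $e^{-s\sqrt{\Delta_Z}}$, so each term carries $e^{-s|\nu_l|}e^{i\theta\upsilon_l}$; this is \emph{not} a power of $\lambda=e^{-s+i\theta}$ when $\upsilon_l<0$, and with this weight every such series converges for $s>0$ regardless of the bounding direction, so the dichotomy in item 2 (convergence for $s<0$ in the bounding-from-above case for $B=N$, and the reversed roles for $B=D$) cannot be derived from your setup. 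The Morse supertrace function is renormalized by the \emph{signed} weight $e^{-s\sqrt{-1}L_V}$ (this is exactly the renormalized supertrace $Tr_{re}(f^*\circ e^{-s\sqrt{-1}L_V})$ of the disc example, and is the point of the remark following the corollary contrasting $\sqrt{\Delta_Z}$ with $\sqrt{-1}L_V$), so that an eigenvalue $\upsilon_l$ of either sign contributes $\lambda^{\upsilon_l}$; only then do genuinely negative powers of $\lambda$ occur and does the region of convergence flip with the bounding direction of $f_0$.

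Second, your claimed resolution of the ``main obstacle'' --- that $L^2$-boundedness and Weyl-law growth cut the series in each degree down to a finite Laurent polynomial --- is false and is in tension with the statement itself. The Dolbeault cohomology in a fixed degree is infinite dimensional (e.g.\ the local holomorphic functions, giving $\sum_{k\ge 0}\lambda^k=1/(1-\lambda)$ in the examples), so each $u_{k,s,\theta}$ is a one-sided \emph{infinite} series with non-negative integer coefficients; were it a finite Laurent polynomial, the convergence statement in item 2 would be vacuous. The finiteness asserted in \eqref{equation_Laurent_series_combinations_Morse} refers only to the index set $I'$, i.e.\ the finitely many residue classes $j$ of the $\sqrt{-1}L_V$-spectrum (coming from the finitely many twists/weights of the bundle) and the finitely many degrees, exactly as in the paper's proof; the convergence of each infinite series is then the content of item 2 and follows from Proposition \ref{proposition_Dolbeault_cohomology}(2) (the hypothesis ``bounding from below/above'' is used directly --- Proposition \ref{proposition_class_of_metrics_bounding_below} only supplies examples of such metrics and is not needed). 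Your item 1 argument is essentially fine once $M(\cdot)(-1)$ is correctly identified via the definitions, in line with the paper's (terse) proof.
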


\begin{proof}[Proof of Corollary \ref{Corollary_Morse_supertraces}]
As in the proof of Proposition \ref{proposition_Dolbeault_cohomology}, the circle action induces a representation on the cohomology which gives rise to a decomposition of the harmonic representatives in the cohomology into the irreducible representations of the circle action. For the case of the trivial bundle, the operator $\sqrt{-1}L_V$ has integer eigenvalues, and the supertrace is thus an element of $\mathbb{Z}_{\geq 0}[\lambda,\lambda^{-1}]$.
For line bundles, the twists may give rise to a shift of the eigenvalues of $\sqrt{-1}L_V$, which we incorporate into an overall factor $C_{k,\theta}$. For general adapted Hermitian bundles to which the action lifts, we get \eqref{equation_Laurent_series_combinations_Morse}. 

The first numbered item follows from the definitions since one knows that the functions can be evaluated at $s=0$.
The second numbered item follows from the definitions and the second numbered item in Proposition \ref{proposition_Dolbeault_cohomology}.
\end{proof}

We note that the proof of the second numbered item here is similar to the proof of the holomorphicity of the Supertrace functions for $\text{Re}(s)>0$ in the proof of Theorem \ref{Theorem_Morse_supertrace}. There the weight used to renormalize was the exponential of $\sqrt{\Delta_Z}$, which is non-negative whereas here the eigenvalues of the operator $\sqrt{-1}L_V$ can be negative, and this is why the region of holomorphicity changes depending on the Reeb rescaling function.

\subsection{Examples}
\label{subsection_Illustrative_examples}

We study trace formulas for several complexes on the unit disc in the complex plane, briefly discuss trace formulas on compactifications (spinning the two sphere) and a singular space (spinning the cusp curve) and direct the reader to \cite{jayasinghe2023l2,jayasinghe2024holomorphic} for many more singular examples and applications in the wedge setting, including extensions of formulas for various partition functions arising in the study of equivariant Donaldson-Witten theory, gravitational entropy and supergravity. 

It is easy to extend the computations given in this section to singular examples with metrics of the form studied in this article, for instance to topological spheres with the metric $d\phi^2+\sin^{2c}(\phi) d\theta^2$ in spherical coordinates $(\phi,\theta) \in [0,\pi]\times [0,2\pi\beta)$ (the round metric corresponds to $c=1,\beta=1$).

\subsubsection{Example: Rotation of the complex plane}
\label{Example_rotation_of_complex_plane}

Consider the space $\widehat{X}=\mathbb{D}^2\subset \mathbb{C}^2$ with the complex coordinate $z$, equipped with the K\"ahler Hamiltonian $S^1$ action generated by the vector field $V=\partial_{\theta}$. 
We consider the Dolbeault complex with del-bar Neumann conditions at the boundary,
($\mathcal{P}_{\min,N}=(L^2\Omega^{0,\cdot}(\widehat{X};E), \mathcal{D}(\overline{\partial}_E),\overline{\partial}_E)$ for $E=\mathbb{C}$ in the notation used in this article).
The cohomology group (which we identify with the space of harmonic sections of $\mathcal{H}^q(\mathcal{P}_{\min,N})$) of the Dolbeault complex is well known to be the space of holomorphic functions in degree $0$ (and is the trivial group in higher degrees), which has a Schauder basis $\{ z^k \}_{k \in I}$ where the indexing set $I$ is the set of non-negative integers. If we define a renormalized supertrace function
\begin{equation}
    Tr_{re}(f^* \circ e^{-s\sqrt{-1} L_V} )|_{\mathcal{H}^0(\mathcal{P}_{\min,N})}=\sum_{k=0}^{\infty} \lambda^k e^{-ks}=\frac{1}{1-\lambda e^{-s}}
\end{equation}
where the first equality is given by the fact that $z=re^{i\theta}$, then the evaluation of the expression on the right hand side at $s=0$ is the local Lefschetz number (the equivariant Todd genus) at the fixed point of the action at the origin, as well as the holomorphic Morse polynomial corresponding to the critical point of the K\"ahler Hamiltonian function $|z|^2$.

If we instead consider the Dolbeault complex with coefficients in the canonical bundle $E=\mathcal{K}$ (which has trivializing section $dz$), then the cohomology of the complex has a basis $\{ z^k dz \}_{k \in I}$, and we get
\begin{equation}
    Tr_{re}(f^* \circ e^{-s\sqrt{-1} L_V} )|_{\mathcal{H}^0(\mathcal{P}_{\min,N})}=\sum_{k=0}^{\infty} \lambda^{k+1} e^{-{k+1}s}=\frac{\lambda e^{-s}}{1-\lambda e^{-s}}
\end{equation}
and evaluating the meromorphic continuation at $s=0$ gives the local Local Lefschetz number and holomorphic Morse polynomial for this complex.
The local equivariant Hirzebruch $\chi_y$ invariant is the linear combination given by
\begin{equation}
\label{chi_y_example_1}
    \chi_y (\lambda):= \frac{1}{1-\lambda}+y\frac{\lambda}{1-\lambda}
\end{equation}
and setting $y=0, -1, +1$  yields the equivariant Todd genus, equivariant Euler characteristic (the equivariant index for the de Rham complex with absolute boundary conditions) and the equivariant Signature invariant respectively.
Similarly the Dolbeault complex with coefficients in the bundle $E=\sqrt{\mathcal{K}}$ (by which we mean a line bundle that squares to the canonical bundle) with cohomology having basis $\{ z^k \sqrt{dz} \}_{k \in I}$ corresponds to the spin Dirac complex, and the cohomology corresponds to the local harmonic spinors and the local Lefschetz number/local Morse polynomial is 
\begin{equation}
    \frac{{\lambda}^{1/2}}{1-\lambda}.
\end{equation}
In this case the spin-Dirac operator is precisely the twisted Dolbeault-Dirac operator (also known as the spin$^\mathbb{C}$ Dirac operator) $D=\overline{\partial}_E+\overline{\partial}^*_E$, and $E$ is isomorphic to the spin bundle. We refer the reader to \cite[\S 7]{jayasinghe2024holomorphic} for more interesting cases of complexes where twists of the canonical bundle appear.

The formulas in \cite{AtiyahBott1,AtiyahBott2} express the denominators of the formulas given above at the determinant of the matrix $Id-df'_p$ where $df'$ is the holomorphic derivative of the self-map corresponding to the rotation we study, at the fixed point $p$.

The work in \cite{jayasinghe2023l2,weiping1990note} shows the relation of the local Lefschetz numbers with equivariant eta invariants. In the case of this example we can understand it as follows in the notation of \cite{weiping1990note}. We denote by $h$ the trace of the geometric endomorphism over the nullspace of the Dirac-type operator on the link $Z=S^1$ (corresponding to the cone at the origin) on functions. Since the Dirac-type operator is simply $D=i^{-1}\partial_{\theta}$, the nullspace is given by the constants, a vector space over $\mathbb{C}$ generated by $1$, and the trace $h=1$. The equivariant eta invariant is the evaluation at $s=0$ of the function $\eta_T(s,D)$, which as in \cite{weiping1990note} can be written as
\begin{equation}
\eta_T(s,D)= \sum_{k \in \mathbb{Z} \setminus \{0\} } \frac{sign(k)}{|k|^s}e^{ik\alpha} 
\end{equation}
where we use that the non-zero eigenspaces are one dimensional with eigenvalues given by the integers, where each factor $e^{ik\alpha}=\lambda^k$ is the trace of the eigensections of $D$ (given by $e^{ik\theta}$). An alternate renormalization corresponding to this article can be used to write the equivariant index as half of
\begin{equation}
\begin{split}
\widetilde{\eta_{T_{\alpha}}(s,D)}+h & = \sum_{k=-\infty}^{-1} -e^{-s|k|} e^{ik\alpha} + \sum_{k=1}^{\infty} e^{-sk} e^{ik\alpha} +1\\
 & = str\Big(e^{-s\sqrt{\Delta_Z}} T_{\alpha}|_{ \mathcal{H}(\mathcal{P}_D(C(Z)))} \Big)+ str\Big(e^{-s\sqrt{\Delta_Z}} T_{\alpha}|_{ \mathcal{H}(\mathcal{P}_N(C(Z)))} \Big)\\
 & = -\frac{e^{-s}e^{-i\alpha}}{1-e^{-s}e^{-i\alpha}}+\frac{1}{1-e^{-s}e^{i\alpha}}
\end{split}
\end{equation}
where the computations hold for $s>0$, for $\alpha$ not an integer multiple of $2\pi$. Evaluating the last expression at $s=0$ we get twice the equivariant Lefschetz number for the local Dolbeault complex.

If instead one evaluates the expression at $\alpha=0$ for general $s$, we see that the (non-equivariant) eta invariant at $s=0$ is $0$ since
\begin{equation}
\widetilde{\eta_{T_{0}}(s,D)}+1 = -\frac{e^{-s}}{1-e^{-s}}+\frac{1}{1-e^{-s}}=1=\sum_{k \in \mathbb{Z} \setminus \{0\} } sign(k)e^{-s|k|} +h
\end{equation}
for $s>0$.
The results in \cite{jayasinghe2023l2} indicate how the equivariant eta invariants are renormalized supertraces of local cohomology groups when there are almost complex structures compatible with the Dirac operator.

\subsubsection{Example: Spinning the 2-sphere}
\label{Example_spinning_sphere_intro}
    Consider $\mathbb{CP}^1$ with projective coordinates $[Z_0:Z_1]$, equipped with the Hamiltonian $\mathbb{C}^*$ action $(\lambda)[Z_0:Z_1]=[\lambda Z_0: Z_1]$ where $\lambda=se^{i\theta} \in \mathbb{C}^*$. 
    This has two fixed points at $[0:1]$ and $[1:0]$. The bundles studied above all extend to this space, and the local Lefschetz numbers at the point $[0:1]$ correspond to those in Subsection \ref{Example_rotation_of_complex_plane} above, whereas those at $[1:0]$ are obtained by subtituting $\lambda^{-1}$ for $\lambda$.
    Thus we can express the sum of global Lefschetz numbers for each complex as the sum of the corresponding local Lefschetz numbers.

We consider the spin bundle $\mathcal{K}^{1/2}$ on this space. This bundle admits a Hermitian connection such that its curvature is equal to the K\"ahler form on the sphere when the form $\omega/(2\pi)$ defines an integral cohomology class (which can be arranged up to a rescaling of the space), and thus is called a \textbf{prequantum line bundle}. Then the global Lefschetz number is 
\begin{equation}
    \frac{{\lambda}^{1/2}}{1-\lambda}+\frac{{\lambda}^{-1/2}}{1-\lambda^{-1}}=0
\end{equation}
where $\lambda^{1/2}=e^{it}$. This character formula for the prequantum line bundle is sometimes called a \textbf{Quantum Duistermaat-Heckman} formula, which can be obtained by expanding the denominators at each fixed points in powers of $t$ and truncating it in the leading powers of $t^n$ (for $2n$ dimensional spaces). We refer the reader to \cite[\S 3]{guillemin1996symplectic} for more details (in particular see the discussion above Proposition 3.4.1, and Example 3.10 in \textit{loc. cit.}).
For the space above we get the Duistermaat-Heckman formula,
\begin{equation}
    \frac{e^{it}}{it}-\frac{e^{-it}}{-it}=\frac{1}{2\pi}\int_{\theta=0}^{2\pi} \int_{\phi=0}^{\pi} e^{it\cos(\phi)} \sin(\phi) d\phi d\theta.
\end{equation}

\subsubsection{Example: Spinning the cusp curve}
\label{Example_spinning_cusp_curve}

Consider the algebraic curve $ZY^2-X^3=0$ in $\mathbb{CP}^2$ equipped with the $\mathbb{C}^*$ action $(\lambda)\cdot [X:Y:Z]=[\lambda^2X:\lambda^3Y:Z]$. This is a K\"ahler Hamiltonian action with one smooth fixed point at $[0:1:0]$ with holomorphic Lefschetz number $1/(1-\lambda^{-1})$. The other fixed point is at the singularity $[0:0:1]$, where the tangent cone is a cone over the trefoil knot parametrized by $\theta \in [0,4\pi)$ where $\theta$ can be identified with the argument of $y/x$. We denote $t^2=\rho e^{i\theta}$ and we refer the reader to subsection 7.1.2 of \cite{jayasinghe2023l2} for a detailed study of the space and the cohomology with the domains studied here. There it is shown that the cohomology of the Dolbeault complex $\mathcal{P}_{\min,N}$ has a basis $\{t^k\}_k \geq 0$, while for $\mathcal{P}_{\max,N}$ there is a basis $\{t^k\}_k \geq -1$ (in both cases $k \in \mathbb{Z}$).

Since the action sends $t \rightarrow \lambda t$, we can compute that the equivariant $\chi_y$ invariant for the cohomology with $W=\min$ is the same as that in \ref{chi_y_example_1} (see Example 7.3.4 of \cite{jayasinghe2023l2}), which for $y=-1$ corresponds to the equivariant index of the de Rham complex $\chi_{-1}(\lambda)=1$, in particular independent of $\lambda$.
For $W=\max$ we get
\begin{equation}
\label{chi_y_example_2}
    \chi_y (\lambda):= \lambda^{-1}+\frac{1}{1-\lambda}+y\frac{1}{1-\lambda}
\end{equation}
which for $y=-1$ is not the equivariant index of the de Rham complex.
This is a particular case of the correspondence of the de Rham and Dolbeault complexes in Remark \ref{Remark_de_Rham_Dolbeault_correspondence}.

\bibliographystyle{alpha}
\bibliography{reference}

\end{document}